\theoremstyle{plain}
\newtheorem{theorem}{Theorem}[section]
\newtheorem{lemma}[theorem]{Lemma}
\newtheorem{proposition}[theorem]{Proposition}
\newtheorem{definition}[theorem]{Definition}
\newtheorem{assumption}[theorem]{Assumption}
\newtheorem{remark}[theorem]{Remark}
\DeclareMathOperator*{\du}{d\!}
\newcommand{\dive}{\operatorname{div}}
\newcommand{\meas}{\operatorname{meas}}
\title{Stability analysis of the  Navier--Stokes velocity tracking problem with bang-bang controls\thanks{A.D.C. is supported by the Alexander von Humboldt
		Foundation with an Alexander von Humboldt research fellowship and the grant FWF I4571-N.
		N.J was supported by the FWF grants P-31400-N32 and I4571-N.
		\v S. N. and J.S.H.S.  have been supported by  Praemium Academiæ of \v S. Ne\v casov\' a. \v S. N. was also supported by the Czech Science Foundation (GA\v CR) through projects GC22-08633J. The Institute of Mathematics, CAS is supported by RVO:67985840.}}
\author{Alberto Dom{\'i}nguez Corella \and Nicolai Jork \and \v S{\'a}rka Ne\v casov\'a \and John Sebastian H. Simon}
\date{}
\begin{document}

\maketitle

\begin{abstract}
  	This paper focuses on the stability of solutions for a velocity-tracking problem associated with the two-dimensional Navier--Stokes equations. The considered optimal control problem does not possess any regularizer in the cost, and hence bang-bang solutions can be expected. We investigate perturbations that account for uncertainty in the tracking data and the initial condition of the state, and  analyze the convergence rate of solutions when the original problem is regularized by the Tikhonov term. The stability analysis relies on the H\"{o}lder subregularity of the optimality mapping, which stems from the necessary conditions of the problem.
\end{abstract}

\section{Introduction}

Let $\Omega\subset\mathbb R^2$ be a bounded domain and $T>0$ and let  $ u_a, u_b:\Omega\times(0,T)\to\mathbb R^2$ be bounded functions.  We consider the  \textit{control set}
 \begin{align}\label{conset}
		\mathcal U:=\left\lbrace  u\in L^\infty(Q)^2:\,\,  u_a(x,t)\le u(x,t)\le  u_b(x,t)\,\,\,\,\text{for a.e. }(x,t)\in Q:=\Omega\times(0,T) \right\rbrace.
\end{align} 
For each control $ u\in\mathcal U$,  interpreted as a force per unit mass acting on the fluid, there is an associated state $ {y_{u}}:Q\to\mathbb R^2$, which is the fluid velocity, satisfying the Navier--Stokes equation
 \begin{align}\label{dynsys} {
			\left\{ \begin{aligned}
				&\partial_t{y_u}- \nu\Delta {y_u}+ ({y_u}\cdot\nabla){y_u} + \nabla {p_u} = u\text{ in }Q,
				\\ &\dive {y_u}=0\,\,\,\,\text{in Q},\,\,  {y_u}=0\,\,\,\,\text{on $\Sigma$},\,\,\, {y_u}(\cdot,0)= y_0\,\,\,\text{in $\Omega$}.
			\end{aligned}
			\right.
		}
\end{align} 
Here, $ y_0$ denotes the initial velocity field, $\nu>0$ is the so-called kinematic viscosity parameter, and $\Sigma:=\partial\Omega\times(0,T)$  the lateral boundary of the open cylinder $Q$.
We consider the classical  \textit{velocity tracking problem} 
 \begin{align}\label{optpro}
		\min_{u\in\mathcal U}\frac{1}{2}\int_{0}^T\int_\Omega |{y_u}(x,t) -  y_{d}(x,t)|^2 \, dx\, dt.
\end{align} 

 {Optimal control problems often involve a regularizing term that aids in the formulation and analysis of the control strategies and the numerical implementation. However, there exist certain scenarios where this regularizing term is absent, such is the case for affine problems, which leads to a different class of solutions that pose additional challenges. In particular, the optimal control for problem (\ref{conset})-(\ref{optpro}) may generically exhibit a bang-bang behavior, i.e. the control $u = (u^1,u^2)$ may satisfy $u^i(x,t) \in \{ u^i_a(x,t), u^i_b(x,t) \}$ for a.e. $(x,t)\in Q$. 
	These kinds of optimal control problems have attracted attention due to their implications for the solution of the underlying dynamical system (\ref{dynsys}). While a regularization on the objective functional could offer certain mathematical advantages, the problem at hand steers the state solution towards the desired velocity better as highlighted previously in \cite{Vtyt1} and \cite{CKbangbang}.}

This paper focuses on the stability of solutions for problem (\ref{conset})-(\ref{optpro}). To highlight our contributions, we recall previous literature related to the problem. We begin with some previous works on optimal control of the Navier--Stokes equations where the regularization term appears. As second-order conditions are intimately related to stability, we mention the work \cite{SecWT} on sufficient conditions. In regard to stability, we mention the works \cite{CK1,CK2}, where error estimates for a Galerkin time-stepping scheme were established.  {We want to mention also the recent paper \cite{FO2023}, where a regularized point-wise-in-time tracking type objective functional is considered.}

In \cite{Vtyt1}, the authors presented a systematic approach to the mathematical analysis and numerical
approximation of tracking the velocity for Navier--Stokes flows with bounded distributed controls without the classic Tikhonov regularization term --  {which is precisely written as the addition of the square of the $L^2$-norm of the control to the objective functional, see \eqref{optproper}}. This analysis includes first-order necessary conditions as well as discretization schemes.  {In \cite{CK2019} the authors consider control problems subject to the stationary Navier--Stokes equation with a sparsity promoting term appearing in the objective functional. The key novelty, besides considering the Navier--Stokes equation, in \cite{CK2019} is that the controls are measure valued. The authors establish first-order necessary and second-order sufficient conditions for local optimality. Subsequently, they study the stability of the optimal states for perturbations appearing distributed in the Navier--Stokes equation and the observational data, see \cite[Theorem 5.1, Theorem 5.3]{CK2019}. In \cite{CK2021} the authors of \cite{CK2019} continue their study of measure-valued control problems, this time for the evolutionary Navier--Stokes equation where the main result is the establishment of first-order necessary and second-order sufficient optimality conditions. This time, there is not a sparsity-promoting term included and the problem is closer to the problem considered in this paper.} Regarding the second-order sufficient conditions and stability of this type of problem, we mention the work \cite{CKbangbang}, where a fully discrete scheme based on discontinuous
(in time) Galerkin approach combined with conformal finite element subspaces in space, is proposed and analyzed. Besides this work, to the best of our knowledge, there are not any other works in stability concerned with bang-bang controls of the velocity tracking problem (\ref{conset})-(\ref{optpro}).

The study of bang-bang controls for problems constrained by partial differential equations has not been thoroughly explored. It was only in 2012 that a second-order analysis for a semilinear elliptic equation was presented \cite{casas2012}. The investigation into the stability analysis of bang-bang minimizers for problems constrained by partial differential equations originated with \cite{Hinzebb}, focusing on the error estimates for discretizing the optimization problem. Since then, several papers have addressed other types of problems involving bang-bang minimizers.
For parabolic problems, the work of N. von Daniels and M. Hinze \cite{Par} explores the accuracy of a variational discretization, while E. Casas and F. Tr{\"o}ltzsch \cite{ParCasas} examined stability with respect to initial data.
In the context of optimal control problems governed by ordinary differential equations, the stability analysis of bang-bang minimizers was explored in \cite{VQreg}. This study investigated the stability of the first-order necessary conditions using the metric regularity property, considering assumptions that involved $L^1$-growths. These assumptions are similar to the classic coercivity condition,  {also known as the Legenrdre-Clebsch condition}, but with certain modifications.

This article aims to investigate the effect of perturbations on problem (\ref{conset})-(\ref{optpro}). Specifically, we focus on perturbations that arise from uncertainties in tracking data and initial state conditions. Additionally, we consider the classic Tikhonov regularization term as a perturbation of the original problem. We provide H\"older  estimates for the rates of convergence. To elaborate on our main result, suppose that we do not know the exact initial condition nor the datum we are tracking, however, we have an approximation of these data, i.e., $w_{0}$ and $w_d$. We can then solve the $\varepsilon$-regularized problem
 \begin{align}\label{optproper}
		\min_{u\in\mathcal U}\frac{1}{2}\int_{0}^T\int_\Omega |{y_u}(x,t) -  w_{d}(x,t)|^2 \, dx\, dt +  \frac{\varepsilon}{2}\int_{0}^T\int_\Omega |u(x,t)|^2\, dx\,dt
\end{align} 
subject to (\ref{dynsys}) with $w_0$ as initial datum and obtain a solution $\widehat u$.  Our main result (Theorem \ref{Mainthm}) gives the estimate 
 \begin{align*}
		\|\widehat u-\bar u\|_{L^1(Q)^2}\le \kappa\Big(\|y_0-w_0\|_{W^{2-\frac{2}{\bar s},s}_{0.\sigma}(\Omega)^2} +\|y_d-w_d\|_{L^2(Q)}+ \varepsilon\Big)^{\frac{1}{\mu}},
\end{align*} 
for a reference solution $\bar u$ of problem (\ref{conset})-(\ref{optpro}), under a growth assumption, in particular, Assumption \ref{growthasu} and the growth assumption in \Cref{theorem:suffsec}. We refer the reader to Theorem \ref{Mainthm} for more precise details, and the technicalities surrounding the  statement.

In our analysis, we employ the concept of strong Hölder subregularity of a set-valued mapping associated with the optimality of the optimal control problem \cite[Section 3I]{DontRock}. The stability of the first-order necessary conditions is investigated as a property of a set-valued mapping that encapsulates the generalized equation satisfied by local minimizers. This property is also referred to as strong (metric) $\theta$-subregularity in the literature, see \cite[Section 4]{Dontsubreg}.  {In previous works, this property has been shown to imply stability for solutions of} optimal control problems, particularly those constrained by partial differential equations. Subregularity results for semilinear problems can be found in \cite{Alellip,corella2023}. However, to the best of our knowledge, such results have not been explored for systems involving the Navier--Stokes equations, which constitutes one of the key contributions of this paper. It is important to note that the low regularity solutions of the Navier--Stokes equations present additional challenges in the analysis, which demand the development of distinct estimates compared to papers such as \cite{Alellip,corella2023,VQreg}.

In addition, we dedicate a self-contained appendix to abstract results in metric subregularity; we formalize the tools used to establish stability in the presence of perturbations. Importantly, these tools are designed to be applicable to a wide range of optimal control problems, extending beyond those constrained by the Navier--Stokes equations.
Previous papers that have addressed the subregularity of the optimality mapping have mainly focused on providing sufficient conditions. However, in this paper, we present an abstract result for necessary conditions, see Theorem \ref{Neccondcri}. This is a  novel contribution and  is significant because it was previously unknown whether subregularity could imply a growth condition. Furthermore, our result clearly establishes that this type of stability implies the bang-bang nature of optimal controls. Due to the general nature of this result, it can be applied to several previously studied optimal control problems, including those examined in \cite{Alellip,corella2023,VQreg}.


This article is structured as follows: Section \ref{sec:2} discusses the functional theoretic tools that are essential for the analysis of the state equations. Section \ref{sec:3} focuses on the analysis of the Navier--Stokes equations, encompassing the existence and regularity of solutions, as well as the examination of the linear system known as the Oseen equations and its dual system. In Section \ref{sec:4}, we delve into the analysis of the optimal control problem, providing first-order necessary conditions and second-order sufficient conditions. Finally, in Section \ref{sec:5}, we establish the stability of optimal controls in the presence of perturbations, which include variations in the desired velocity, the initial state of the governing equations, and a Tikhonov perturbation in the objective functional. The abstract results on subregularity are presented in the Appendix.


\section{Preliminaries}\label{sec:2}

{For a given real normed space $X$,   its topological dual is denoted by $X^*$ and  $\langle \cdot, \cdot \rangle_{X^*,X}:X^*\times X\to\mathbb R$ denotes their duality pairing.}
The domain $\Omega$ under consideration is a connected bounded subset of $\mathbb R^2$ with boundary $\partial\Omega$ of class $C^3$. The unit normal vector field is denoted by $ n:\partial\Omega\to\mathbb R^2$.

\vspace{.1 in}\noindent{\bf Sobolev spaces.}
{For a measurable set $E$, we consider the usual Lebesgue spaces $L^s(E)$ of $s$-integrable functions for $s\in[1,\infty)$, and $L^\infty(E)$ the space of essentially bounded functions. These spaces are endowed with their standard norms, denoted by $\|\cdot\|_{L^s(E)^d}$ for $s\in [1,+\infty]$ and $d = 1,2,2\times 2$. We shall use the notation $(\cdot,\cdot)_E$ for the $L^2(E)^d$ inner product.}

For $m\in\mathbb N$ and $1\le s\le \infty$,  $ W^{m,s}(\Omega)$ denotes the space of all functions in $L^p(\Omega)$ with all of its weak derivatives of order $m$ belonging $L^p(\Omega)$. We write  $ W^{0,p}(\Omega):= L^p(\Omega)$ and  $ H^m(\Omega):=  W^{m,2}(\Omega)$. The norms in these spaces will be denoted as $\|\cdot\|_{W^{m,s}(\Omega)^d}$.

The zero trace Sobolev spaces are denoted by $ W_{0}^{m,s}(\Omega)$. Poincar{\'e} inequality {implies} that the seminorm in $W^{1,s}(\Omega)$, which is  {defined for any $\varphi \in W^{1,s}(\Omega)$ as $|\varphi|_{W^{1,s}(\Omega)} = \|\nabla \varphi\|_{L^s(\Omega)}$}, is a norm in $W_{0}^{1,s}(\Omega)$ and is equivalent to the usual norm. Hence, from hereon, when we refer to the norm in $W_{0}^{1,s}(\Omega)$ we mean it to be the seminorm. As usual, for the Hilbertian case, we write $H^{1}_0(\Omega):= W_{0}^{1,2}(\Omega)$ and $ H^{-1}(\Omega):= H_0^1(\Omega)^* $.

Let us recall some embeddings that are vital for the upcoming analyses.
Rellich--Kondrachov embedding theorem gives us the compact embedding $W^{m,s_1}(\Omega) \hookrightarrow L^{s_2}(\Omega)$ if either of the two cases hold: i. $m>0$ and $1 \le s_2 < 2s_1/(2-ms_2)$; and ii. $2=ms_1$ and  {$1\le s_2 < + \infty$}. Similarly, we get the {continuous} embedding $W^{j+m,s}(\Omega) \hookrightarrow C^{j}(\overline{\Omega})$ if either $ms \ge 2$, it becomes compact when $ms > 2$. For references, see \cite[Theorem 6.3, Part I]{adams2003} for the former, and \cite[Theorem 6.3, Part III]{adams2003} for the latter.

\vspace{.1 in}\noindent{\bf Solenoidal spaces.} 
To take into account the incompressibility condition, we consider the following solenoidal spaces
 \begin{align*}
		& W^{m,s}_{0,\sigma} := \left\{ \psi\in  W^{m,s}_0(\Omega)^2:\,   \dive\psi = 0 \text{ in }\Omega   \right\},\\
		&H_s := \left\{\psi\in L^s(\Omega)^2: \dive\psi = 0 \text{ in }L^s(\Omega),\, \psi\cdot n = 0 \text{ on }\partial\Omega \right\}.
\end{align*} 
We write $V_s:= W^{1,s}_{0,\sigma}$ and in the Hilbertian case, we use the notations $ V:=  V_2$ and $ H:= H_2$.

The spaces $ V$ and $ H$ form a Gelfand triple $( V, H, V^*)$, i.e., the embeddings
 \begin{align*}
		{V\hookrightarrow  H\cong  H^*\hookrightarrow  V^*} 
\end{align*} 
are dense and continuous. Moreover, the first embedding is compact due to Rellich--Kondrachov Theorem; and by Schauder Theorem, the second one is also compact. 

\vspace{.1 in}\noindent{\bf Auxiliary spatial operators.} The orthogonal complement of $ H$ can be characterized as $
H^\bot=\{\psi\in   L^2(\Omega)^2:\,\psi = \nabla  q \,\,\,\,\text{for some $q\in H^1(\Omega)^2$} \}.$
The representation  $ L^2(\Omega)^2=  H\oplus  H^\bot$ is called \textit{the Helmholtz--Leray decomposition}.
We define the \textit{Leray projection operator} $P:  L^2(\Omega)^2 \to  H$ by $P\psi = \psi_1$, where $\psi_1$ is the unique element of $ H$ such that $\psi-\psi_1$ belongs to $ H^\bot$.

{The Stokes operator $A:V\cap H^2(\Omega)^2\subset H\to H$ is thus defined as $A \psi = -P\Delta \psi$. 
	We can also look at the Stokes operator as linear operator from $V\cap H^2(\Omega)^2$ to $V^*$, i.e., $\langle A\psi_1,\psi_2 \rangle_V = (\nabla\psi_1,\nabla\psi_2)_\Omega$ for any $\psi_1\in V\cap H^2(\Omega)^2$ and $\psi_2\in V$.}

To deal with the nonlinearity caused by the convective {term}, we introduce the trilinear form $b: H^1(\Omega)^2\times  H^1(\Omega)^2\times  H^1(\Omega)^2\to \mathbb{R}$ defined as $b( \psi_1,  \psi_2,  \psi_3) = ((\psi_1\cdot\nabla)\psi_2, \psi_3)_\Omega$. Using H{\"o}lder inequality and Rellich--Kondrachov embedding one can see that $b$ is continuous. Furthermore, for any $\psi,\psi_1,\psi_2\in V$ we have $b(\psi, \psi_1,\psi_2) = -b(\psi, \psi_2,\psi_1)$. As a consequence of H{\"o}lder and Gagliardo--Nirenberg inequalities we also get
 \begin{align}
		b(\psi_1, \psi_2,\psi_3) \le c \|\psi_1\|_H^{1/2}\|\psi_1\|_V^{1/2}\|\psi_2\|_V\|\psi_3\|_H^{1/2}\|\psi_3\|_V^{1/2}{\quad \forall \psi_1,\psi_2,\psi_3 \in V}.\label{ladyzhen}
\end{align} 
 {For more details we refer to \cite{girault1986} and \cite{temam1984}.}

\vspace{.1 in}\noindent{\bf Bochner spaces.}
Let $T>0$ and  $X$ be a Banach space. We use the notation $C(\overline{I};X)$ for functions from $I$ to $X$ that can be continuously extended to $[0,T]$. For $s\in[1,\infty]$, we denote by $L^s(I; X)$ the usual space of functions $\psi:I\to X$ such that $t\to \|\psi(t)\|_{X}$ belongs to $L^s(0,T)$. The norm in $L^s(I; X)$ will be denoted as $\|\cdot\|_{L^s(X)}$, and the duality pairing of $L^s(I;X)$ and its dual as $\langle\cdot,\cdot\rangle_{L^s(X)}$. For $s\in[1,\infty]$ and $m\in\mathbb N$, the space $W^{m,s}(I;X)$ consists of functions $\psi\in L^s(I;X)$ whose distributional time derivative  $\partial_t^m \psi$ belongs to $L^s(I;X)$. For $m\in\mathbb N$, we write $H^m(I;X) := W^{m,2}(I;X)$. Note that the spaces $L^s(I;L^s(\Omega)^2)$ for $1\le s <\infty$ can be identified as the space $L^s(Q)^2$.

 {We also introduce the spaces $W^\alpha(I):= L^2(I;V)\cap W^{1,\alpha}(I;V^*)$ that take into account the distributional time derivatives  with the norm $\|\cdot\|_{W^\alpha(I)} := \|\cdot \|_{L^2(I;V)} + \|\cdot\|_{W^{1,\alpha}(I;V^*)}$. For simplicity, we write $W(I)$ when $\alpha = 2$.}

To aid us in our analyses, we recall some embedding theorems from \cite[Theorem 3]{amann2001}. We start with two real Banach spaces $X_0$ and $X_1$ such that $X_1\hookrightarrow X_0$ is dense. We denote by $X_{\theta,s}:= (X_0,X_1)_{\theta,q}$ their real interpolation with exponential functor $0<\theta<1$, where $1\le q \le +\infty$. Amman's theorem gives us an embedding for the space $W^1_s(I,(X_0,X_1)):= L^s(I;X_1)\cap W^{1,s}(I;X_0)$. In fact, if $r\in\mathbb{R}$ with $1/s<r<1$ and $0\le \theta < 1-r $ then $W^1_s(I,(X_0,X_1)) \hookrightarrow C^{0,r-1/s}(I;X_{\theta,1})$. Furthermore, if $X_1\hookrightarrow X_0$ is compact then $W^1_s(I,(X_0,X_1)) \hookrightarrow C^{0,r-1/s}(\overline{I};X_{\theta,1})$ is also compact. By virtue of trace theorem one also gets $W^1_s(I,(X_0,X_1)) \hookrightarrow C(\overline{I};X_{1-1/s,1})$.

Some of the direct consequences of the embedding above are the following:
\begin{itemize}
	\item[$\bullet$] when $X_0 = V^*$ and $X_1 = V$ so that taking $s = 2$ on the second embedding above we have $X_{1/2,1} \hookrightarrow X_{1/2,2} = (V^*,V)_{1/2,2} = H$ which implies that the embedding $W(I) = W^1_2(I,(V^*,V)) \hookrightarrow C(\overline{I};H)$ is compact;
	\item[$\bullet$] when $X_0 = L^s(\Omega)^2$ and $X_1 = W^{2,s}(\Omega)^2$, we have, from \cite[Theorem 7.31]{adams2003}, that $X_{1-1/s,1} \hookrightarrow W^{2-2/s,s}(\Omega)^2$ and hence the continuous embeddings
	 \begin{align*}
			W^{2,1}_s:= L^s(I;W^{2,s}(\Omega)\cap V_s)\cap W^{1,s}(I;L^s(\Omega)^2)  \hookrightarrow   C(\overline{I};W^{2-2/s,s}_{0,\sigma}(\Omega)^2).
	\end{align*} 
\end{itemize}  
Because of the divergence-free and zero trace assumptions on the elements of $W^{2,1}_s$, we used $W^{2-2/s,s}_{0,\sigma}(\Omega)^2$ instead of $W^{2-2/s,s}(\Omega)^2$. Furthermore, by virtue of Rellich--Kondrachov embedding theorem, we see that the embedding $W^{2-2/s,s}(\Omega)^2 \hookrightarrow C(\overline{\Omega})^2$ is compact, whenever $(2-2/s)s > 2$. Hence, the embedding $W^{2,1}_s \hookrightarrow C(\overline{Q})^2$ is also compact, whenever $s>2$. Similarly, $W^{2-2/s,s}(\Omega)^2 \hookrightarrow C^1(\overline{\Omega})^2$ is compact if $(1-2/s)s > 2$. Therefore, we have the compact embedding $W^{2,1}_s \hookrightarrow C(\overline{I};C^1(\overline{\Omega})^2)$ whenever $s > 4$.

We end this section by introducing some operators which help in the forthcoming analyses. We begin with the time-dependent extension of the Stokes operator, i.e., $A:L^2(I;V)\to L^2(I,V^*)$ defined as
 \begin{align*}
		\langle Au, v\rangle_{L^2(V^*),L^2(V)} & = \int_0^T \langle (Au)(t),v(t) \rangle_V \du t  = \int_0^T (\nabla u(t),\nabla v(t))_\Omega \du t.
\end{align*} 
In connection with the trilinear form we mentioned previously, we introduce the bilinear operator $B:W(I)\times W(I) \to L^2(I;V^*)$ defined as 
 \begin{align*}
		\langle B(u, v),w\rangle_{L^2(V^*),L^2(V)} = \int_0^T b(u(t),v(t),w(t)) \du t.
\end{align*} 
Indeed, the membership of $B(u,v)$ to $L^2(I;V^*)$ for $u,v\in W(I)$ follows from H{\"o}lder inequality and \eqref{ladyzhen}
 \begin{align*}
		|\langle B(u, v),w\rangle_{L^2(V^*),L^2(V)}| 
		&\le c\|u\|_{W(I)}\|v\|_{W(I)}\|w\|_{L^2(V)}.
\end{align*} 
We use the notation $B(u) = B(u,u)$, for simplicity. Motivated by the linearization of the Navier--Stokes equations we introduce the operator $\widetilde{B}:W(I)\times W(I)\to \mathcal{L}(W(I),L^2(I;V^*))$ defined as
 \begin{align*}
		\widetilde{B}(y_1,y_2)z = B(y_1,z) + B(z,y_2) \quad \forall y_1,y_2,z\in W(I).
\end{align*}  
Given $\overline{y}\in W(I)$, we also see that the Fr{\'e}chet derivative $B'(\overline{y})$ of the operator $B$ at $\overline{y}$ is defined as 
 \begin{align*}
		\langle B'(\overline{y})z, v  \rangle_{L^2(V^*),L^2(V)} = \langle \widetilde{B}(\overline{y},\overline{y})z, v  \rangle_{L^2(V^*),L^2(V)}\quad \forall z\in W(I),\, v\in L^2(I;V^*).
\end{align*} 
The adjoint of the linear operator $\widetilde{B}(\overline{y}_1,\overline{y}_2)$, for $\overline{y}_1,\overline{y}_2\in W(I)$, is denoted as $\widetilde{B}(\overline{y}_1,\overline{y}_2)^*$ and is defined by $$\langle \widetilde{B}(\overline{y}_1,\overline{y}_2)^*w, v  \rangle_{L^2(V^*),L^2(V)} = \langle B(\overline{y}_1,v), w  \rangle_{L^2(V)} + \langle B(v,\overline{y}_2), w  \rangle_{L^2(V)}.$$
We also see that the adjoint of the linear operator $B'(\overline{y})$ given $\overline{y}$ can be written as 
 \begin{align*}
		B'(\overline{y})^*w = \widetilde{B}(\overline{y},\overline{y})^*w \quad \forall w\in W(I).
\end{align*}


\section{Analysis of the governing equations}\label{sec:3}
In this section, we shall discuss some known well-posedness results for the Navier--Stokes equations and  an Oseen equation (linearization of the Navier--Stokes equations), as well as its corresponding adjoint system.

\vspace{.1 in}\noindent{\bf Navier--Stokes equations.} Given $y_0 \in H$ and $u\in L^2(I;V^*)$, we say that $y\in W(I)$ is a weak solution to the Navier--Stokes equations whenever it satisfies
 \begin{align}
		\langle \partial_t y(t), v  \rangle_{V^*,V} + \nu (\nabla y(t), \nabla v)_\Omega + b(u(t),u(t),v) = \langle u(t), v  \rangle_{V^*,V}\quad\forall v\in V,
		\label{weakvar:NS}
\end{align} 
for a.e. $t\in I$, and $y(0) = y_0$ in $H$. The evaluation $y(0)$ is well-defined due to the embedding $W(I)\hookrightarrow C(\overline{I};H)$.

The existence and regularity of solutions has been well studied, for such results we refer to  {\cite[Theorems III.3.1 and III.3.2]{temam1984}}.
\begin{theorem}
	Suppose that $u\in L^2(I;V^*)$ and $y_0\in H$, then there exists a unique $y \in W(I)$ such that \eqref{weakvar:NS} holds. Furthermore, the energy estimate
	 \begin{align*}
			\|y\|_{W(I)} \le c ( \|u\|_{L^2(I;V^*)} +\|y_0\|_H )
	\end{align*} 
	holds  for some constant $c>0$ independent of $u$ and $y_0$.
	\label{theorem:weakexistenceNS}
\end{theorem}
The norm $\|\cdot\|_{W(I)}$ majorizes the norms $\|\cdot\|_{L^\infty(I;H)}$ and $\|\cdot\|_{L^2(I;V)}$. The proof of \Cref{theorem:weakexistenceNS} can be carried out using the usual Galerkin methods, it is even possible to prove that the mapping $[(u,y_0)\mapsto y]: L^2(I;V^*)\times H \to W(I)$ is locally Lipschitz continuous.  {See \cite[p. 294-295]{temam1984} for more details.}

In the analysis of the optimal control problem, we will employ $L^s$-strong solutions of the Navier--Stokes equations, $s\in[2,\infty)$.  {We say that a weak solution $y\in W(I)$ is an $L^s$-strong solution of the Navier--Stokes equations \eqref{dynsys} if $\partial_ty \in L^s(Q)^2$ and $y \in L^s(I;W^{2,s}(\Omega)^2)$.}  The following theorem summarizes the existence of strong solutions for \eqref{dynsys} as well as the recovery of the pressure term, if we assume additional regularity on the initial data and the external force.  {We refer the reader to \cite[Theorem 2]{gerhardt1978} for the proof.} In fact, we have that $y\in W^{2,1}_s$, this is due to the fact that -- by following the proof of \cite[Theorem 2]{gerhardt1978} -- $y\in L^s(I;V_s)$. We also note that due to the embedding $W^{2,1}_s \hookrightarrow   C(\overline{I};W^{2-2/s,s}_{0,\sigma}(\Omega)^2)$, the appropriate space for the initial data is $W^{2-2/s,s}_{0,\sigma}(\Omega)^2$.
\begin{theorem}
	Let $s\ge 2$, the assumptions $u\in L^s(Q)^2$ and $y_0\in W_{0,\sigma}^{2-2/s,s}(\Omega)^2$ imply the unique existence of the strong solution $y \in W^{2,1}_s$ of \eqref{dynsys}. Furthermore, there exists  {$p\in L^s(I;W^{1,s}(\Omega))$, which is unique up to an element of $L^s(0,T)$,} such that
	 \begin{align}
			\left\{
			\begin{aligned}
				\partial_t y + \nu A y + B y + \nabla p & = u &&\text{in }L^s(I;H_s),\\
				y(0) & = y_0 &&\text{in }W_0^{2-2/s,s}(\Omega)^2,
			\end{aligned}
			\right.\label{strongwp:NS}
	\end{align} 
	and the pair $(y,p) \in W^{2,1}_s\times L^s(I;W^{1,s}(\Omega))$ satisfies the estimate
	 \begin{align}
			\|y\|_{W^{2,1}_s} + \|\nabla p \|_{L^s({Q})^2} \le c ( \|y_0\|_{L^s(\Omega)^2} + \|u\|_{L^s(Q)^2} ).
			\label{energy:strongLP}
	\end{align} 
	\label{existence:strongNS}
\end{theorem}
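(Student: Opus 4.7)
The plan is to recast \eqref{dynsys} as a linear non-stationary Stokes system with forcing $u-(y\cdot\nabla)y$, invoke maximal $L^s$-regularity, and control the nonlinearity through a finite bootstrap tailored to two space dimensions.

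First I would apply Theorem~\ref{theorem:weakexistenceNS}; the embeddings $L^s(Q)^2\hookrightarrow L^2(I;V^*)$ and $W^{2-2/s,s}_{0,\sigma}(\Omega)^2\hookrightarrow H$, both valid for $s\ge 2$, guarantee that its hypotheses are met and produce a unique weak solution $y\in\widetilde{W}$. Uniqueness in the strong class follows immediately from this, since any $W^{2,1}_s$-solution is in particular a weak solution. To upgrade regularity I would freeze $f:=u-(y\cdot\nabla)y$ as the datum of a linear Stokes problem and invoke the classical maximal $L^s$-regularity theorem (Solonnikov, Giga--Sohr, Amann), which produces the pressure $p$ in the stated class together with an estimate of the form
\begin{equation*}
\|y\|_{W^{2,1}_s}+\|\nabla p\|_{L^s(Q)^2}\le c\bigl(\|y_0\|_{W^{2-2/s,s}_{0,\sigma}(\Omega)^2}+\|f\|_{L^s(Q)^2}\bigr).
\end{equation*}
Recovery of the pressure itself is a de~Rham argument applied to $u-\partial_t y-\nu A y-(y\cdot\nabla)y$, whose image under the Leray projector $P$ vanishes, so that the remainder is a spatial gradient.

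The core technical step, and the main obstacle, is bounding $\|(y\cdot\nabla)y\|_{L^s(Q)^2}$. I would proceed by bootstrap. Starting from $y\in L^\infty(I;H)\cap L^2(I;V)$, inequality~\eqref{ladyzhen} yields $y\in L^4(Q)^2$; combining this with the $V$-regularity of $y_0$ and an energy identity on the linearised system improves this to $y\in L^\infty(I;V)\cap L^2(I;V\cap H^2(\Omega)^2)$. The two-dimensional embedding $H^1(\Omega)\hookrightarrow L^q(\Omega)$ for every $q<\infty$ then places $(y\cdot\nabla)y$ in some $L^{s_1}(Q)^2$ with $s_1>2$, and a second application of maximal regularity raises $y$ to $W^{2,1}_{s_1}$. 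Iterating finitely many times, with each step improving the integrability of $\nabla y$ through the embedding $W^{2-2/s_k,s_k}_{0,\sigma}(\Omega)^2\hookrightarrow W^{1,q_k}(\Omega)^2$, eventually reaches the exponent $s$, after which a Gronwall-type absorption of the intermediate norms into the left-hand side closes the estimate in the form~\eqref{energy:strongLP}. The iteration terminates precisely because $\Omega\subset\mathbb R^2$; in three dimensions the corresponding bootstrap would stall and only conditional regularity results survive.
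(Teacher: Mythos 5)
The paper does not actually prove this theorem: it is stated as a known result, with the surrounding text deferring to the classical literature (Temam's book and the Casas--Chrysafinos papers) for the existence and regularity theory of two-dimensional strong solutions. Your sketch follows the standard argument underlying those references --- weak solution, maximal $L^s$-regularity for the non-stationary Stokes operator with frozen right-hand side $u-(y\cdot\nabla)y$, a de~Rham argument for the pressure, and a bootstrap that terminates because $\Omega\subset\mathbb R^2$ --- so there is no methodological divergence to report, and the overall structure is sound.

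Two points deserve tightening. First, the closing step is not a Gronwall-type absorption at the level of the $L^s$ maximal regularity estimate: the bound on $\|(y\cdot\nabla)y\|_{L^s(Q)^2}$ produced by the bootstrap is quadratic in norms of $y$ that are themselves already controlled by the data, so it cannot be moved to the left-hand side; it is simply dominated by a constant depending nonlinearly on $\|y_0\|$ and $\|u\|_{L^s(Q)^2}$. The genuine Gronwall argument occurs earlier, in the $L^\infty(I;V)\cap L^2(I;H^2(\Omega)^2)$ energy estimate obtained by testing with $Ay$, where the two-dimensional interpolation inequalities let you absorb $\|Ay\|_{L^2(\Omega)^2}^2$. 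As a consequence the constant $c$ in \eqref{energy:strongLP} depends on the size of the data --- harmless here, since the controls range over the bounded set $\mathcal U$ and the initial datum is fixed, but worth stating. Second, your version of the linear estimate carries $\|y_0\|_{W^{2-2/s,s}_{0,\sigma}(\Omega)^2}$ on the right-hand side, which is the correct trace-space norm required by maximal regularity; the $\|y_0\|_{L^s(\Omega)^2}$ printed in \eqref{energy:strongLP} is too weak for the linear theory and appears to be a typo in the statement, so your formulation is in fact the defensible one.
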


We also have a weak-strong convergence for the force-to-velocity operator.
\begin{theorem}
	\label{continuity:control-to-state}
	Let $\{ u_k \}\subset L^s(Q)^2$ be a sequence converging weakly to $u\in L^s(Q)^2$, where $s>2$. Then $y_k\to y$ in $C(\overline{Q})^2$, where $y_k$ and $y$ solve \eqref{strongwp:NS} with $u_k$ and $u$ as the external forces, respectively.
\end{theorem}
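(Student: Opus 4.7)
The plan is to exploit the energy estimate \eqref{energy:strongLP} together with the compact embedding $W^{2,1}_s \hookrightarrow C(\overline{Q})^2$ for $s>2$ noted in \Cref{sec:2}, and then identify the limit through the uniqueness part of \Cref{existence:strongNS}.

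First, since $\{u_k\}$ is weakly convergent in $L^s(Q)^2$, it is norm-bounded. Applying \eqref{energy:strongLP} to each $y_k$ and noting that $y_0$ is fixed, I obtain a uniform bound $\|y_k\|_{W^{2,1}_s} \le C$. By the compact embedding $W^{2,1}_s \hookrightarrow C(\overline{Q})^2$ recorded in \Cref{sec:2} (valid for $s>2$), there exist a subsequence, which I still denote by $\{y_k\}$, and some $\tilde y \in W^{2,1}_s$ such that
\begin{equation*}
y_k \to \tilde y \text{ in } C(\overline{Q})^2, \qquad y_k \rightharpoonup \tilde y \text{ in } W^{2,1}_s.
\end{equation*}

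Next, I pass to the limit in the weak formulation \eqref{weak:NSlp} with a test function $v \in L^{s'}(I;V_{s'})$. The linear terms $\int_0^T(\partial_t y_k, v)_\Omega \du t$ and $\nu\int_0^T(\nabla y_k,\nabla v)_\Omega \du t$ converge to the corresponding expressions involving $\tilde y$ because $\partial_t y_k \rightharpoonup \partial_t \tilde y$ in $L^s(Q)^2$ and $\nabla y_k \rightharpoonup \nabla \tilde y$ in $L^s(I;L^s(\Omega)^{2\times 2})$. The right-hand side $\int_0^T(u_k,v)_\Omega \du t$ converges to $\int_0^T(u,v)_\Omega \du t$ by hypothesis. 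The main obstacle is the trilinear term $\int_0^T b(y_k,y_k,v)\du t$; I split it as
\begin{equation*}
\int_0^T b(y_k - \tilde y, y_k, v)\du t + \int_0^T b(\tilde y, y_k, v)\du t.
\end{equation*}
The first piece vanishes because $\|y_k - \tilde y\|_{C(\overline Q)^2}\to 0$, $\|\nabla y_k\|_{L^s(Q)}$ is uniformly bounded and $v\in L^{s'}(I;V_{s'})$. The second piece converges to $\int_0^T b(\tilde y, \tilde y, v)\du t$ by the weak convergence $\nabla y_k \rightharpoonup \nabla \tilde y$ in $L^s$, since $\tilde y\, v \in L^{s'}(Q)^{2\times 2}$ by H\"older's inequality ($\tilde y$ is bounded on $\overline Q$). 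Therefore $\tilde y$ is an $L^s$-strong solution of \eqref{weak:NSlp} with datum $(u, y_0)$, and the uniqueness statement in \Cref{existence:strongNS} forces $\tilde y = y$.

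Finally, I invoke the subsequence principle: the above argument shows that every subsequence of $\{y_k\}$ admits a further subsequence converging in $C(\overline Q)^2$ to the same limit $y$. Hence the entire sequence $y_k \to y$ in $C(\overline Q)^2$, which completes the proof.
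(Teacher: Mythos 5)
Your argument is correct, but note that the paper itself does not prove this statement: it is imported verbatim from \cite[Corollary 2.4]{CKbangbang}, so there is no in-paper proof to compare against. What you have written is the standard self-contained compactness argument, and each step checks out: weak convergence gives a uniform $L^s$ bound on $\{u_k\}$, the estimate \eqref{energy:strongLP} transfers this to a uniform bound on $\|y_k\|_{W^{2,1}_s}$, reflexivity of $W^{2,1}_s$ plus the compact embedding $W^{2,1}_s\hookrightarrow C(\overline Q)^2$ for $s>2$ yield the subsequential limits, and your splitting of the convective term correctly pairs the uniform convergence of $y_k$ against the bounded $L^s$ norms of $\nabla y_k$ for the first piece and uses genuine weak convergence of $\nabla y_k$ against the fixed $L^{s'}$ multiplier $\tilde y\otimes v$ for the second. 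Two small points you leave implicit but which are harmless: you should record that $\tilde y(0)=y_0$, which follows at once from $y_k(0)=y_0$ and the uniform convergence $y_k\to\tilde y$ in $C(\overline Q)^2$ (this is needed before invoking uniqueness in \Cref{existence:strongNS}), and you rely on $W^{2,1}_s$ being reflexive so that bounded sequences have weakly convergent subsequences, which holds since it is a closed subspace of a product of reflexive $L^s$-type spaces for $2<s<\infty$. The concluding subsequence principle is applied correctly. In short, your proof is a valid replacement for the citation.
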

 {\begin{proof}
		From \eqref{energy:strongLP} and the compact embedding $W^{2,1}_s \hookrightarrow C(\overline{Q})^2$ we see that, up to a subsequence, $y_k\rightharpoonup y$ in $W^{2,1}_s$ and  $y_k\to y$ in $C(\overline{Q})^2$. A routinary \textit{passage to the limit} argument shows that $y\in W^{2,1}_s$ solves \eqref{strongwp:NS} with $u\in L^s(Q)^2$ as the right-hand side.    
\end{proof}}

\vspace{.1 in}\noindent{\bf Oseen equations.} As usual, in  optimal control problems,  one needs to consider the linearization of the systems governing the states. For this reason, for known elements $\overline{y}_1, \overline{y}_2$ and external force $v$,  we shall consider the Oseen equations
 \begin{align}\label{system:linearnavierstokes}
		\left\{ \begin{aligned}
			\partial_tz - \nu\Delta z + (\overline{y}_1\cdot\nabla)z + (z\cdot\nabla)\overline{y}_2  + \nabla q = v\,\,\,\,\text{in }Q,
			\\ \dive {z}=0\,\,\,\,\text{in }Q,\,\,  {z}=0\,\,\,\,\text{on }\Sigma,\,\,\, {z}(\cdot,0)= z_0\,\,\,\text{in }\Omega.
		\end{aligned}
		\right.
\end{align} 
We can write the weak formulation of \eqref{system:linearnavierstokes} as 
 \begin{align}
		\left\{
		\begin{aligned}
			\partial_t z + \nu Az + \widetilde{B}(\overline{y}_1,\overline{y}_2)z &= v &&\text{in }L^2(I;V^*),\\
			z(0) & = z_0 &&\text{in }H.
		\end{aligned}\right.\label{weakop:linear}
\end{align} 

The existence of weak solutions of the linearized system can be proven using the usual Galerkin method just as in the nonlinear system.

\begin{theorem}[ {\cite[Proposition 2.1]{hinze2002}}]
	Suppose that $v\in L^2(I;V^*)$, $z_0\in H$ and $\overline{y}_1,\overline{y}_2\in L^2(I;V)\cap L^\infty(I;H)$, then there exists a unique $z \in W(I)$ such that \eqref{weakop:linear} holds. Furthermore, the energy estimate
	 \begin{align*}
			\|z\|_{W(I)} \le c \left( \|z_0\|_H +  \|v\|_{L^2(I;V^*)} \right)
	\end{align*} 
	holds for some constant $c>0$ independent of $z_0$ and $v$.
	\label{theorem:weakexistencelinearNS}
\end{theorem}

Quite similarly as in the nonlinear case, improved regularity of the external force and initial datum leads to a more regular solution. 
\begin{theorem}[ {\cite[Theorem 4.1.33]{dwachsmuth2006}}]
	Let $s\ge 2$. The assumptions $v\in L^s(Q)^2$, $z_0\in W_{0,\sigma}^{2-2/s,s}(\Omega)^2$ and $\overline{y}_1,\overline{y}_2 \in W^{2,1}_s$ imply the unique existence of the strong solution $z \in W^{2,1}_s$ of \eqref{system:linearnavierstokes}. The existence of a pressure term $q\in L^s(I;W^{1,s}(\Omega))$, which is unique up to an element of $L^s(0,T)$, is also guaranteed and is known to satisfy 
	 \begin{align}
			\left\{
			\begin{aligned}
				\partial_t z + \nu Az + \widetilde{B}(\overline{y}_1,\overline{y}_2)z + \nabla q &= v &&\text{in }L^s(I;H_s),\\
				z(0) & = z_0 &&\text{in }W_{0,\sigma}^{2-2/s,s}(\Omega)^2.
			\end{aligned}\right.\label{strongwq:linear}
	\end{align} 
	Furthermore, the pair $(z,q)\in W^{2,1}_s\times L^s(I;W^{1,s}(\Omega)\cap L^s(\Omega)/\mathbb{R})$ satisfies
	 \begin{align}
			\|z\|_{W^{2,1}_s} + \|\nabla q\|_{L^s(Q)^2} \le c (\| z_0\|_{W_{0,\sigma}^{2-2/s,s}(\Omega)^2} + \|v\|_{L^s(Q)^2}).
			\label{energy:stronglinearLP}
	\end{align} \label{theorem:stronglinear}
\end{theorem}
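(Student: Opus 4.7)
My plan is to derive the strong $L^s$-regularity of the Oseen system from the maximal $L^s$-regularity of the non-stationary Stokes operator by treating the convective coupling as a perturbative source. Since $v \in L^s(Q)^2 \hookrightarrow L^2(I;V^*)$, $z_0 \in W^{2-2/s,s}_{0,\sigma}(\Omega)^2 \hookrightarrow H$ and $\overline{y}_1,\overline{y}_2 \in W^{2,1}_s \hookrightarrow L^2(I;V)\cap L^\infty(I;H)$, Theorem~\ref{theorem:weakexistencelinearNS} already provides a unique weak solution $z \in \widetilde{W}$; strong uniqueness will therefore follow for free once strong existence is established. I would then rewrite \eqref{system:linearnavierstokes} as the pure Stokes problem
\begin{align*}
\partial_t z + \nu A z + \nabla q = v - \widetilde{B}(\overline{y}_1,\overline{y}_2)z \quad\text{in } Q, \qquad z(0)=z_0,
\end{align*}
and apply the maximal $L^s$-regularity of the Stokes operator underlying Theorem~\ref{existence:strongNS}. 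Provided the right-hand side lies in $L^s(Q)^2$, this yields $(z,q)\in W^{2,1}_s \times L^s(I;W^{1,s}(\Omega)\cap L^s(\Omega)/\mathbb R)$ with a quantitative estimate of the form $\|z\|_{W^{2,1}_s}+\|\nabla q\|_{L^s(Q)^2} \le c(\|z_0\|_{W^{2-2/s,s}_{0,\sigma}(\Omega)^2} + \|v\|_{L^s(Q)^2} + \|\widetilde{B}(\overline{y}_1,\overline{y}_2)z\|_{L^s(Q)^2})$.

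The technical heart of the argument is the estimate on $\widetilde{B}(\overline{y}_1,\overline{y}_2)z = (\overline{y}_1\cdot\nabla)z + (z\cdot\nabla)\overline{y}_2$ in $L^s(Q)^2$. For $s>2$, the embedding $W^{2,1}_s \hookrightarrow C(\overline{Q})^2$ recorded in \Cref{sec:2} makes this essentially routine: $\overline{y}_1, \overline{y}_2$ and $z$ are uniformly bounded on $Q$, whereas $\nabla z$ and $\nabla \overline{y}_2$ already lie in $L^s(Q)^{2\times 2}$, so H{\"o}lder's inequality produces a bound of the form $\|\widetilde{B}(\overline{y}_1,\overline{y}_2)z\|_{L^s(Q)^2} \le c(\|\overline{y}_1\|_{W^{2,1}_s}+\|\overline{y}_2\|_{W^{2,1}_s})\|z\|_{W^{2,1}_s}$. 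The main obstacle is the borderline case $s=2$, in which $W^{2,1}_2$ does not embed into $L^\infty(Q)^2$. Here I plan to invoke the two-dimensional Ladyzhenskaya inequality $\|w\cdot\nabla\varphi\|_{L^2(\Omega)} \le c\|w\|_{L^4(\Omega)}\|\nabla\varphi\|_{L^4(\Omega)}$ together with $\overline{y}_i \in L^\infty(I;V)\hookrightarrow L^\infty(I;L^4(\Omega)^2)$, the Sobolev embedding $H^1(\Omega)\hookrightarrow L^4(\Omega)$, and the fact that $\nabla z$ and $\nabla\overline{y}_2$ belong to $L^2(I;H^1(\Omega))$; this suffices to place $\widetilde{B}(\overline{y}_1,\overline{y}_2)z$ in $L^2(Q)^2$.

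The remaining subtlety is that the resulting bound on $\widetilde{B}(\overline{y}_1,\overline{y}_2)z$ is linear in $\|z\|_{W^{2,1}_s}$, so simply plugging it into the Stokes estimate does not close without smallness. I would handle this by a standard time-splitting: partition $[0,T]$ into finitely many subintervals $[T_k,T_{k+1}]$ of length so small that the product of the maximal-regularity constant and the local operator norm of $\widetilde{B}(\overline{y}_1,\overline{y}_2)$ is, say, below $1/2$. On each subinterval this closes the estimate via a Banach contraction applied to the Stokes solution map, and the pieces are patched using the trace embedding $W^{2,1}_s \hookrightarrow C(\overline{I};W^{2-2/s,s}_{0,\sigma}(\Omega)^2)$ to ensure that the starting datum at $T_{k+1}$ has the required regularity. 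Summing the local estimates and absorbing the coupling constants into the right-hand side produces \eqref{energy:stronglinearLP}; uniqueness of the strong solution is then inherited from the already established weak uniqueness.
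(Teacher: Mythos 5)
The paper itself offers no proof of \Cref{theorem:stronglinear}: it is stated as a known consequence of maximal parabolic regularity for the Stokes operator ("quite similarly as in the nonlinear case"), so there is no argument of the authors' to compare yours against. Your overall route --- weak well-posedness from \Cref{theorem:weakexistencelinearNS}, then viewing \eqref{system:linearnavierstokes} as a Stokes problem with right-hand side $v-\widetilde{B}(\overline{y}_1,\overline{y}_2)z$ and bootstrapping via maximal $L^s$-regularity --- is the standard one and is in principle viable, including your separate treatment of the borderline case $s=2$ via Ladyzhenskaya-type estimates.

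There is, however, one step that fails as literally written: the claim that the subintervals $[T_k,T_{k+1}]$ can be chosen so short that "the local operator norm of $\widetilde{B}(\overline{y}_1,\overline{y}_2)$" times the maximal-regularity constant drops below $1/2$. For the term $(z\cdot\nabla)\overline{y}_2$ this is fine, since $\|\nabla\overline{y}_2\|_{L^s(T_k,T_{k+1};L^s)}\to 0$ by absolute continuity of the integral. But for $(\overline{y}_1\cdot\nabla)z$ the relevant quantity is $\|\overline{y}_1\|_{C(\overline{Q})^2}\,\|\nabla z\|_{L^s(T_k,T_{k+1};L^s)}$, and the operator norm of $z\mapsto\nabla z$ from $W^{2,1}_s$ on a short interval to $L^s$ does \emph{not} tend to zero with the interval length (test with a time-independent $z$: both sides scale like $|T_{k+1}-T_k|^{1/s}$). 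The contraction can still be closed, but the smallness has to be manufactured differently: either via an Ehrling-type interpolation $\|\nabla z\|_{L^s}\le\varepsilon\|z\|_{W^{2,1}_s}+C_\varepsilon\|z\|_{L^s}$ combined with $\|z\|_{L^s(T_k,T_{k+1};L^s)}\le|T_{k+1}-T_k|^{1/s}\|z\|_{C(\overline{I};L^s)}$, or --- more directly for your fixed-point scheme --- by observing that the \emph{difference} of two iterates vanishes at $t=T_k$, so the trace embedding $W^{2,1}_s\hookrightarrow C(\overline{I};W^{2-2/s,s}_{0,\sigma}(\Omega)^2)$ holds with a constant uniform in the interval length, and then $\|\nabla w\|_{L^s(T_k,T_{k+1};L^s)}\le|T_{k+1}-T_k|^{1/s}\,c\,\|w\|_{W^{2,1}_s}$ using $W^{2-2/s,s}\hookrightarrow W^{1,s}$ for $s\ge2$. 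You invoke the trace embedding only to patch initial data between subintervals; it is in fact the ingredient that makes the contraction contract, and without making that explicit the absorption step is unjustified.
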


\vspace{.1 in}\noindent{\bf Dual Oseen equations.} Given elements $\overline{y}_1,\overline{y}_2\in W(I)$ and $v\in L^\alpha(I;V^*)$ we are also interested in looking at the following system, which is also known as the {\it adjoint} to the Oseen equations introduced above 
 \begin{align*}
		\left\{ \begin{aligned}
			-\partial_t w - \nu \Delta w - (\overline{y}_2\cdot\nabla)w + (\nabla \overline{y}_2)^\top w + \nabla r = v\,\,\,\,\text{in Q},
			\\ \dive {w}=0\,\,\,\,\text{in Q},\,\,  {w}=0\,\,\,\,\text{on $\Sigma$},\,\,\, {w}(\cdot,T)= 0\,\,\,\text{in $\Omega$}.
		\end{aligned}
		\right.
\end{align*} 
The variational form of the system above is 
 \begin{align}
		\begin{aligned}
			-\partial_t w + \nu Aw + \widetilde{B}(\overline{y}_1,\overline{y}_2)^*w & = v &&\text{in }L^\alpha(I;V^*)\\
			w(T) & = 0 &&\text{in }H.
		\end{aligned}\label{weakop:adjoint}
\end{align} 

The regularity of weak solutions of the adjoint equations are not the same as that of \eqref{weakvar:NS} and \eqref{weakop:linear}. In fact, we get $W^{4/3}(I)$ instead of having the solutions belong to $W(I)$. We have such a result summarized in the following theorem.
\begin{theorem}[ {\cite[Proposition 2.1]{hinze2002}}]
	Let $v\in L^{4/3}(I;V^*)$, and $\overline{y}_1,\overline{y}_2\in W(I)$. Then a solution $w\in W^{4/3}(I)$ of \eqref{weakop:adjoint} exists and satisfies
	 \begin{align*}
			\|w\|_{W^{4/3}(I)} \le c\|v\|_{L^{4/3}(I;V^*)}. 
	\end{align*} 
	for some constant $c>0$ independent of $v$.
	\label{theorem:weakadjoint}
\end{theorem}
Despite the lack of time regularity of the weak solution, we  could eventually recover uniqueness and $L^s$-regularity of the adjoint solution given additional regularity of the data.
\begin{theorem}[ {\cite[Theorem 4.1.35]{dwachsmuth2006}}]
	Suppose that $v\in L^s(Q)^2$ with $s\ge 2$, and $\overline{y}_1,\overline{y}_2\in {W}^{2,1}_s$. The weak solution of \eqref{weakop:adjoint} is unique and is an $L^s$-strong solution and belongs to $W^{2,1}_s$. Furthermore, there exists $r\in L^s(I;W^{1,s}(\Omega))$, which is unique up to an element of $L^s(0,T)$, such that
	 \begin{align}
			\begin{aligned}
				-\partial_t w + \nu Aw + \widetilde{B}(\overline{y}_1,\overline{y}_2)^*w + \nabla r & = v &&\text{in }L^s(I;H_s)\\
				w(T) & = 0 &&\text{in }W^{2-2/s,s}_0(\Omega)^2.
			\end{aligned}\label{strongwr:adjoint}
	\end{align} 
	The pair $(w,r)\in {W}^{2,1}_s\times L^s(I;W^{1,s}(\Omega)\cap L^s(\Omega)/\mathbb{R}) $, furthermore, satisfies the energy inequality
	 \begin{align*}
			\|w\|_{W^{2,1}_s} + \|\nabla r\|_{L^s(Q)^2} \le c\|v\|_{L^{s}(Q)^2},
	\end{align*} 
	from some constant $c>0$ independent of $v$.\label{th:stradjoint}
\end{theorem}

We end this section with the following lemma which was inspired by the $L^s$-$L^1$ stability of solutions proven in \cite[Lemma 2.3]{casas2022a} and \cite[Lemma 2]{corella2023}. 
\begin{lemma}
	Let $v\in L^{s}(Q)^2$ for $s>2$, and $\overline{y} \in W^{2,1}_s$. If $z_v \in W^{2,1}_s$ is the strong solution of \eqref{strongwq:linear} and $w_v \in W^{2,1}_s$ is the solution of \eqref{strongwr:adjoint} both with $\overline{y}_1=\overline{y}_2=\overline{y}$.
	Then for any $\tilde{s}\in [1,2)$, there exists $c>0$ independent on $v$ such that 
	 \begin{align}
			\max\{ \|z_v\|_{L^{\tilde{s}}(Q)^2},\|w_v\|_{L^{\tilde{s}}(Q)^2} \} \le c\|v\|_{L^1(Q)^2}.\label{estimate:lsl1}
	\end{align} 
	\label{lemma:lsl1}
\end{lemma}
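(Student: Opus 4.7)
My plan is to prove the two estimates by a classical duality argument that turns the desired $L^{\tilde s}$-$L^1$ bound into an $L^\infty$-bound for the associated dual problem, where the $L^\infty$-bound follows for free from the Sobolev embedding $W^{2,1}_{\tilde s'} \hookrightarrow C(\overline Q)^2$ recorded in Section \ref{sec:2}. Throughout I implicitly take the initial datum of the Oseen system to be zero (otherwise no estimate of the form \eqref{estimate:lsl1} can hold).

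First I would reduce to the case $\tilde s \in (1,2)$. For $\tilde s = 1$, since $Q$ is bounded, H\"older's inequality yields $\|z_v\|_{L^1(Q)^2} \le |Q|^{1-1/\tilde s_0}\|z_v\|_{L^{\tilde s_0}(Q)^2}$ for any fixed $\tilde s_0 \in (1,2)$, and analogously for $w_v$. So the bound for $\tilde s=1$ follows from the bound for one $\tilde s_0\in(1,2)$. Next, for $\tilde s \in (1,2)$ I use
\[
\|z_v\|_{L^{\tilde s}(Q)^2} = \sup\Bigl\{ \textstyle\int_Q z_v\cdot \varphi\,dx\,dt \;:\; \varphi \in L^{\tilde s'}(Q)^2,\ \|\varphi\|_{L^{\tilde s'}(Q)^2}\le 1\Bigr\},
\]
where $\tilde s' \in (2,\infty)$ is the H\"older conjugate of $\tilde s$. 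For each such $\varphi$, I solve the dual Oseen system \eqref{strongwr:adjoint} (with $\overline{y}_1=\overline{y}_2=\overline{y}$) having $\varphi$ as right-hand side and zero terminal condition; Theorem \ref{th:stradjoint} produces a unique $w_\varphi \in W^{2,1}_{\tilde s'}$ with $\|w_\varphi\|_{W^{2,1}_{\tilde s'}} \le c\|\varphi\|_{L^{\tilde s'}(Q)^2}$. Since $\tilde s' > 2$, the compact embedding $W^{2,1}_{\tilde s'} \hookrightarrow C(\overline Q)^2$ from Section \ref{sec:2} yields $\|w_\varphi\|_{L^\infty(Q)^2} \le c\|\varphi\|_{L^{\tilde s'}(Q)^2}$.

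Now I pair $z_v$ with $w_\varphi$. Formally, testing $\partial_t z_v + \nu A z_v + \widetilde B(\overline y,\overline y)z_v = v$ with $w_\varphi$ and $-\partial_t w_\varphi + \nu A w_\varphi + \widetilde B(\overline y,\overline y)^*w_\varphi = \varphi$ with $z_v$, then integrating by parts in time and using $z_v(0)=0$ together with $w_\varphi(T)=0$, the Stokes and convective terms cancel by the adjoint identities in Section \ref{sec:2}, giving
\[
\int_Q z_v\cdot\varphi\,dx\,dt \;=\; \int_Q v\cdot w_\varphi\,dx\,dt \;\le\; \|v\|_{L^1(Q)^2}\|w_\varphi\|_{L^\infty(Q)^2} \;\le\; c\|v\|_{L^1(Q)^2}\|\varphi\|_{L^{\tilde s'}(Q)^2}.
\]
Taking the supremum over admissible $\varphi$ gives the estimate for $z_v$. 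To bound $w_v$, I repeat the argument with the roles reversed: given $\varphi \in L^{\tilde s'}(Q)^2$ I solve the forward Oseen equation \eqref{strongwq:linear} with $\varphi$ as forcing and zero initial datum, obtain a strong solution $z_\varphi \in W^{2,1}_{\tilde s'}$ with $\|z_\varphi\|_{L^\infty(Q)^2}\le c\|\varphi\|_{L^{\tilde s'}(Q)^2}$ via Theorem \ref{theorem:stronglinear}, and pair it with $w_v$.

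The main technical point I expect to be the most delicate is justifying the integration-by-parts identity rigorously. The strong Oseen solutions lie in $W^{2,1}_s \hookrightarrow C(\overline I; W^{2-2/s,s}_{0,\sigma}(\Omega)^2)$, which gives more than enough regularity to perform the time integration by parts and to identify the convective term via the adjoint $\widetilde B(\overline y,\overline y)^*$; this part is routine but must be written carefully because $z_v$ and $w_\varphi$ come from different Lebesgue scales ($s$ and $\tilde s'$, respectively), so one should first test in the weak formulations and then invoke the strong-form identities after density and duality pairing. Once the pairing identity is established, the rest of the argument is immediate from Theorems \ref{theorem:stronglinear} and \ref{th:stradjoint} and the embedding into $C(\overline Q)^2$.
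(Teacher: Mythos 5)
Your proposal is correct and follows essentially the same duality argument as the paper: the paper simply tests against the single extremal datum $|z_v|^{\tilde s-2}z_v\in L^{\tilde s'}(Q)^2$ and divides by $\|z_v\|_{L^{\tilde s}(Q)^2}^{\tilde s-1}$, whereas you take the supremum over the unit ball of $L^{\tilde s'}(Q)^2$ --- the same pairing identity, the same use of Theorems \ref{theorem:stronglinear} and \ref{th:stradjoint} and the embedding $W^{2,1}_{\tilde s'}\hookrightarrow C(\overline Q)^2$. Your explicit reduction of the case $\tilde s=1$ and your remark that the homogeneous initial (resp.\ terminal) condition is implicitly required are both accurate and slightly more careful than the paper's write-up.
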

\begin{proof}
	Since $v\in L^s(Q)^2$ for $s>2$, $z_v \in C(\overline{\Omega})^2$. This implies that $|z_v|^{\tilde{s}-2}z_v\in L^{\tilde{s}'}(Q)^2,$ where $\tilde{s}' = \tilde{s}/(\tilde{s}-1) > 2$.
	The solution $\mathfrak{w}$ satisfying
	 \begin{align*}
			\begin{aligned}
				-\partial_t \mathfrak{w} + \nu A \mathfrak{w} + B'(\overline{y})^*\mathfrak{w} + \nabla \mathfrak{r} & = |z_v|^{\tilde{s}-2}z_v &&\text{in }L^{\tilde{s}'}(Q)^2\\
				\mathfrak{w}(T)& = 0 &&\text{in }W_0^{2-2/{\tilde{s}'},\tilde{s}'}(\Omega)^2,
			\end{aligned}
	\end{align*} 
	belongs to $C(\overline{Q})^2$, and satisfies --- according to \Cref{th:stradjoint} ---
	 \begin{align*}
			\begin{aligned}
				\|\mathfrak{w}\|_{C(\overline{Q})^2} & \le c \||z_v|^{\tilde{s}-2}z_v\|_{L^{ \tilde{s}'}(Q)^2} \le c\left( \int_Q \left||z_v|^{\tilde{s}-2}z_v \right|^{\tilde{s}'} \du x\du t \right)^{1/\tilde{s}'}\\
				& \le c\left( \int_Q |z_v|^{\tilde{s}} \du x\du t \right)^{(\tilde{s}-1)/\tilde{s}}\le c\|z_v\|^{\tilde{s}-1}_{L^{\tilde{s}}(Q)^2}.
			\end{aligned}
	\end{align*} 
	Using integration by parts, and the definition of the adjoint of the operators, we get
	 \begin{align*}
			\begin{aligned}
				& \|z_v\|^{\tilde{s}}_{L^{\tilde{s}}(Q)^2}  = \int_{Q} |z_v|^{\tilde{s}} \du x\du t = \big( |z_v|^{\tilde{s}-2}z_v,z_v \big)_Q  = \big( -\partial_t \mathfrak{w} + \nu A \mathfrak{w} + B'(\overline{y})^*\mathfrak{w} + \nabla\mathfrak{r},z_v \big)_Q \\
				& = \big( \mathfrak{w}, \partial_t z_v + \nu Az_v + B'(\overline{y})z_v + \nabla {q} \big)_Q = \big( \mathfrak{w}, v \big)_Q \le \|v\|_{L^1(Q)^2}\|\mathfrak{w}\|_{C(\overline{Q})^2}\\
				& \le c\|v\|_{L^1(Q)^2}\|z_u\|^{\tilde{s}-1}_{L^{\tilde{s}}(Q)^2}.
			\end{aligned}
	\end{align*} 
	The transition from the second line to the third line in the computation above used the fact that $\nabla\cdot z_v = \nabla\cdot\mathfrak{w} = 0$. This cancels out the term $\mathfrak{r}\nabla\cdot z_v$ and allows the addition of $q\nabla\cdot\mathfrak{w}$. We thus have the estimate for $z_v$ in \eqref{estimate:lsl1}. 
	
	For the solution $w_v \in W^{2,1}_s$, we also get $w_v \in C(\overline{Q})^2$ from which we get $|w_v|^{\tilde{s}-2}w_v\in L^{\tilde{s}'}(\Omega)^2$ whence the solution $\mathfrak{Z}\in C(\overline{Q})^2$ to the equations
	 \begin{align*}
			\begin{aligned}
				\partial_t \mathfrak{Z} + \nu A \mathfrak{Z} + B'(\overline{y})\mathfrak{Z} + \nabla\mathfrak{q} & = |w_v|^{\bar{s}-2}w_v &&\text{in }L^{\tilde{s}'}(Q)^2\\
				\mathfrak{Z}(0)& = 0 &&\text{in }W_0^{2-2/{\tilde{s}'},\tilde{s}'}(\Omega)^2,
			\end{aligned}
	\end{align*} 
	satisfies --- by virtue of \Cref{theorem:stronglinear} --- the estimate $\|\mathfrak{Z}\|_{C(\overline{Q})^2} \le c \|w_v\|^{\tilde{s}-1}_{L^{\tilde{s}}(Q)^2}$. From this, we get the estimate
	 \begin{align*}
			\begin{aligned}
				&\|w_v\|_{L^{\tilde{s}}(Q)^2}^{\tilde{s}}  = \int_{Q} |w_v|^{\tilde{s}} \du x\du t = \big(|w_v|^{\tilde{s}-2}w_v,w_v \big)_Q  = \big( \partial_t \mathfrak{Z} + \nu A \mathfrak{Z} + B'(\overline{y})\mathfrak{Z} + \nabla\mathfrak{q},w_v \big)_Q\\
				& = \big( \mathfrak{Z},-\partial_t w_v + \nu A w_v + B'(\overline{y})^*w_v + \nabla r \big)_Q  = \big( \mathfrak{Z},v \big)_Q \le \|\mathfrak{Z}\|_{C(\overline{Q})^2}\|v\|_{L^1(Q)^2}\\
				& \le c\|w_v\|^{\tilde{s}-1}_{L^{\tilde{s}}(Q)^2}\|v\|_{L^1(Q)^2}.
			\end{aligned}
	\end{align*} 
	This finally establishes estimate \eqref{estimate:lsl1}.
\end{proof}


\section{The velocity tracking problem}\label{sec:4}
In this section, we give a concise statement of the optimal control problem and specify the data  assumptions. We also recall some known results including the first-order necessary conditions and the second-order sufficient condition.

We fix positive numbers $\bar s$ and $\bar s'$ such that  $\bar s>2$ and $1/\bar s+1/\bar s'=1$. For the optimal control problem \eqref{opticon}, we will make the following standing assumption. 

\begin{assumption}
	 {The following assumptions hold}:
	\begin{itemize}\item[(i)] the initial datum $y_0$ belongs to $W^{2-\frac{2}{\bar s},\bar s}_{0.\sigma}(\Omega)^2$;
		
		\item[(ii)] the tracking datum $y_d$ belongs to $L^{\infty}(Q)^2$;
		
		\item[(iii)] the functions $u_a, u_b:Q\to\mathbb R^2$ are bounded.
	\end{itemize}
\end{assumption}

We recall that the control set is given by
 \begin{align*}
		\mathcal U=\left\lbrace  u = (u^1,u^2)\in  L^\infty(Q)^2:\,\,  u^j_a\le u^j\le  u_b^j\,\,\,\,\text{a.e. in $Q$ for each $j=1,2$} \right\rbrace.
\end{align*} 
By the box constraints imposed on the controls, $\mathcal{U}$  {is bounded in $L^\infty(Q)$. From this} we define
 \begin{align}\label{uniformbound:control}
		M_{\mathcal U}:=\sup_{u\in\mathcal U}|u|_{L^\infty(Q)^2}.
\end{align} 

Denoting the \textit{objective functional} by $\mathcal J:L^{\bar{s}}(Q)^2\to \mathbb R$, i.e., 
 \begin{align*}
		\mathcal J(u):=\frac{1}{2}\int_{0}^T\int_{\Omega}\, |{y_u}(t,x)- y_d(t,x)|^2\, dx\,dt,
\end{align*} 
 {we consider the optimal control problem }
 \begin{align}
		\min_{u\in\mathcal{U}}\mathcal J(u)\quad\text{ subject to } \eqref{dynsys}.
		\label{opticon}\tag{P}
\end{align} 
\begin{definition}
	Let $ \bar u\in\mathcal U$. We define the minimality radius of $ \bar u$ as
	 \begin{align*}
			\bar r_{ \bar u}:=\sup\left\lbrace \delta\ge0:\,\,\mathcal J( \bar u)\le\mathcal J( u)\,\,\,\,\text{for all $ u\in\mathcal U$ with $\| u- \bar u\|_{{L^{\bar{s}}(Q)^2}}\le\delta$}  \right\rbrace.
	\end{align*} 
	We say that $ \bar u$ is a local minimizer of problem \eqref{opticon} if $\bar r_{ \bar u}<+\infty$ and that $ \bar u$ is a global minimizer of problem \eqref{opticon} if $\bar r_{ \bar u}=+\infty$.
\end{definition}

The existence of at least one global minimizer for problem \eqref{opticon}  follows from the weak sequential lower semi-continuity of the objective functional and the continuity of the force-velocity operator according to \Cref{continuity:control-to-state}.

\vspace{.1 in}\noindent{\bf Force-to-velocity mapping.}
By \Cref{existence:strongNS}, to each control and each initial datum corresponds a unique state. The mapping $\widetilde{\mathcal{S}}:L^{\bar{s}}(Q)^2\times W^{2-2/\bar{s},2}_{0,\sigma} \to W_{\bar s}^{2,1}$ given by $\widetilde{\mathcal{S}}(u,y_0)=y_u$ is called \textit{the data-to-velocity mapping}. And for a fixed initial datum $y_0\in W^{2-2/\bar{s},2}_{0,\sigma}$ we define $\mathcal{S}:L^{\bar{s}}(Q)^2 \to W_{\bar s}^{2,1}$ as $\mathcal{S}(\cdot):= \widetilde{\mathcal{S}}(\cdot,y_0)$ which we call the \textit{force-to-velocity} map. Such mapping is infinitely differentiable.
\begin{theorem}[{\cite[Theorem 2.5]{CKbangbang}}]
	\label{thmctso}
	The force-to-velocity mapping $\mathcal S:L^{\bar{s}}(Q)^2\to W_{\bar s}^{2,1}$ is of class $C^\infty$, i.e., it has  Fr\'echet derivatives of all orders. Moreover, the following statements hold:
	\begin{itemize}
		\item[(i)] The first  {order} derivative is given by 
		 \begin{align*}
				\mathcal S'(u)v=z_{u,v}\quad \text{for } u,v\in L^{\bar s}(Q)^2,
		\end{align*} 
		where each $z_{u,v}\in W_{\bar s}^{2,1}$ is the unique $L^{\bar s}$-strong solution of the Oseen equations \eqref{system:linearnavierstokes} with $\overline{y}_1 = \overline{y}_2 = y_u$ and zero initial datum.  
		
		\item[(ii)] The second  {order} derivative is given by 
		 \begin{align*}
				\mathcal S''(u)\big(v_1,v_2\big)=z_{u,(v_1,v_2)}\quad \text{for } u,v_1,v_2\in L^{\bar s}(Q)^2,
		\end{align*} 
		where each $z_{u,(v_1,v_2)}\in W_{\bar s}^{2,1}$ is the unique $L^{\bar s}$-strong solution of the system 
		 \begin{align*}
				\begin{aligned}
					\partial_t z + \nu Az + B'(y)z + \nabla q&= -B''(y_u)[z_{u,v_1},z_{u,v_2}] &&\text{in }L^{\bar s}(Q)^2\\
					z(0)& = 0&&\text{in } W_{0,\sigma}^{2-2/\bar s,\bar s}(\Omega)^2.
				\end{aligned}
		\end{align*} 
		where $z_{u,v_1} = \mathcal{S}'(u)v_1$ and $z_{u,v_2} = \mathcal{S}'(u)v_2$.
	\end{itemize}

\end{theorem}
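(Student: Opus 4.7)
My plan is to derive the theorem from the implicit function theorem on Banach spaces, with Theorem~\ref{theorem:stronglinear} supplying the invertibility of the linearization. Define
\[
F: L^{\bar s}(Q)^2 \times W_{\bar s}^{2,1} \longrightarrow L^{\bar s}(I;H_{\bar s}) \times W_{0,\sigma}^{2-\frac{2}{\bar s},\bar s}(\Omega)^2,
\qquad F(u,y) := \bigl( \partial_t y + \nu A y + B(y) - u,\; y(0) - y_0 \bigr).
\]
By \Cref{existence:strongNS}, the state $y = \mathcal S(u)$ is uniquely characterized by $F(u,y) = 0$. Since $B$ is continuous bilinear, the map $y \mapsto B(y,y)$ is a continuous quadratic form, and coupled with the linearity of $\partial_t$, $A$, and the initial-trace map $y \mapsto y(0)$, this makes $F$ a polynomial of total degree two in $(u,y)$, hence of class $C^\infty$. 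The fact that $B(y)\in L^{\bar s}(I;H_{\bar s})$ whenever $y \in W_{\bar s}^{2,1}$ uses the embedding $W_{\bar s}^{2,1} \hookrightarrow C(\overline Q)^2$ recalled in Section~\ref{sec:2} (valid because $\bar s > 2$), combined with $\nabla y \in L^{\bar s}(Q)^{2\times 2}$ and the boundedness of the Leray projector on $L^{\bar s}$.

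The partial Fréchet derivative $\partial_y F(u,y): W_{\bar s}^{2,1} \to L^{\bar s}(I;H_{\bar s}) \times W_{0,\sigma}^{2-\frac{2}{\bar s},\bar s}(\Omega)^2$ is the linear operator
\[
z \longmapsto \bigl( \partial_t z + \nu A z + \widetilde B(y,y)z,\; z(0) \bigr),
\]
which is precisely the left-hand side of the Oseen system~\eqref{strongwq:linear} with $\overline y_1 = \overline y_2 = y$. \Cref{theorem:stronglinear} asserts that this map has a unique preimage for every datum in the target, so it is a continuous bijection and, by the open mapping theorem, an isomorphism at every admissible $(u,y)$. The implicit function theorem then produces a $C^\infty$ selection on a neighborhood of each $u \in \mathcal U$ in $L^{\bar s}(Q)^2$, which by uniqueness coincides with $\mathcal S$.

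The derivative formulas follow by differentiating the identity $F(u,\mathcal S(u)) \equiv 0$. Since $\partial_u F(u,y)v = (-v,0)$, the first-order relation $\partial_y F(u,y_u)\mathcal S'(u)v + \partial_u F(u,y_u)v = 0$ reads exactly as the Oseen problem for $z_{u,v}$ in part~(i). Differentiating once more, and noting that $F$ is affine in $u$ (so $\partial_{uu}F = \partial_{uy}F = 0$), we obtain
\[
\partial_y F(u,y_u)\, \mathcal S''(u)(v_1,v_2) \;=\; -\,\partial_{yy} F(u,y_u)\bigl[\mathcal S'(u)v_1,\, \mathcal S'(u)v_2\bigr].
\]
The bilinearity of $B$ yields $\partial_{yy} F(u,y_u)[z_1,z_2] = B(z_1,z_2) + B(z_2,z_1) = B''(y_u)[z_1,z_2]$, while the initial-value component of $\partial_{yy}F$ vanishes. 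Hence $\mathcal S''(u)(v_1,v_2) = z_{u,(v_1,v_2)}$, giving~(ii).

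The main obstacle is essentially bookkeeping the function-space setting: verifying that $F$ takes values in the advertised target, that the initial-trace map is surjective onto $W_{0,\sigma}^{2-\frac{2}{\bar s},\bar s}(\Omega)^2$ (which follows from the trace embedding of $W_{\bar s}^{2,1}$ recalled in Section~\ref{sec:2}), and that the Leray projection and the nonlinear term $B(y,y)$ indeed land in $L^{\bar s}(I;H_{\bar s})$. Once these are in place, the heavy lifting of linear invertibility is handled by \Cref{theorem:stronglinear}, and the remaining steps are a textbook application of the implicit function theorem together with symbolic differentiation.
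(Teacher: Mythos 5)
Your argument is correct, and it is the standard route: the paper itself states Theorem \ref{thmctso} without proof (deferring, like the neighbouring lemmas, to the literature on the velocity tracking problem), and the proof implicitly relied upon there is exactly this implicit-function-theorem argument, with Theorem \ref{theorem:stronglinear} providing the isomorphism property of the linearized operator and the quadratic (hence $C^\infty$) dependence of $F$ on $(u,y)$ giving smoothness. The only point to tidy is the first component of your target space: since $u$ and the convective term are not solenoidal, one should either apply the Leray projection $P$ to the whole momentum equation (so that $F$ genuinely lands in $L^{\bar s}(I;H_{\bar s})$) or carry the pressure as an additional unknown as in \eqref{strongwp:NS}; with that bookkeeping fixed, the identification of $\partial_{yy}F[z_1,z_2]=B(z_1,z_2)+B(z_2,z_1)=B''(y_u)[z_1,z_2]$ and the resulting formulas in (i) and (ii) are exactly as claimed.
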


 {
	Since $\mathcal{U} \subset L^{\bar{s}}(Q)^2$ the derivatives are also well-defined on $u\in \mathcal{U}$ in the directions $v\in L^{\bar{s}}(Q)^2$ and $(v_1,v_2)\in L^{\bar{s}}(Q)^2\times L^{\bar{s}}(Q)^2$ with $u+v, u+v_1,u+v_2 \in \mathcal{U} $, see also the first paragraph after Theorem 2.3 in \cite{Vtyt1}.
}

\vspace{.1 in}\noindent{\bf Force-to-covelocity mapping.}
Since $\mathcal{S}'(u) \in \mathcal{L}(L^{\bar{s}}(Q)^2;W^{2,1}_{\bar{s},0})$, we have its adjoint denoted as $\mathcal{S}'(u)^*$. For an arbitrary force $\bar{v}\in L^{\bar{s}}(Q)^2$ the \textit{costate}  $w_{u,\bar{v}} = \mathcal{S}'(u)^*\bar{v}$ is the element that solves \eqref{weakop:adjoint} with $\overline{y}_1=\overline{y}_2 = y_u$ and $\bar{v}$ as the right-hand side under the appropriate spaces.

To this end, we recall some stability estimates for the maps $\mathcal{S}$, its linearization $\mathcal{S}'$ and the adjoint $\mathcal{S}'(\cdot)^*$.

\begin{lemma}[{\cite[Corollary 2.6]{CKbangbang}}]
	Let $u,\bar u\in L^{\bar{s}}(Q)^2$, $y_u = \mathcal{S}(u)\in W^{2,1}_{\bar{s}}$ and $y_{\bar{u}} = \mathcal{S}(\bar{u})\in W^{2,1}_{\bar{s}}$ be the states corresponding to $u$ and $\bar{u}$, respectively. Then there exists a constant $c>0$ such that 
	 \begin{align}
			\| y_u - y_{\bar{u}} \|_{W^{2,1}_{\bar{s}}} \le c \|u - \bar u\|_{L^{\bar{s}}(Q)^2}.
			\label{state-to-controlgap}
	\end{align} 
	\label{lemma:state-to-controlgap}
\end{lemma}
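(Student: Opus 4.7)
Set $\delta y := y_u - y_{\bar u}$, $\delta p := p_u - p_{\bar u}$ and $\delta u := u - \bar u$. Subtracting the two strong formulations in \eqref{strongwp:NS} and rewriting the convective difference as
\begin{align*}
(y_u\cdot\nabla)y_u - (y_{\bar u}\cdot\nabla)y_{\bar u} = (y_u\cdot\nabla)\delta y + (\delta y\cdot\nabla)y_{\bar u} = \widetilde B(y_u,y_{\bar u})\,\delta y,
\end{align*}
I see that $\delta y$ is the strong solution of the linear Oseen system
\begin{align*}
\partial_t \delta y + \nu A\delta y + \widetilde B(y_u,y_{\bar u})\,\delta y + \nabla \delta p = \delta u, \qquad \delta y(0)=0,
\end{align*}
with coefficients $\overline y_1 = y_u$, $\overline y_2 = y_{\bar u}$ in $W^{2,1}_{\bar s}$ and right-hand side $\delta u\in L^{\bar s}(Q)^2$.

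The plan is to apply \Cref{theorem:stronglinear} directly to this system. All its hypotheses are met: the initial datum is zero (hence trivially in $W^{2-2/\bar s,\bar s}_{0,\sigma}(\Omega)^2$), the right-hand side $\delta u$ lies in $L^{\bar s}(Q)^2$, and the coefficient states $y_u,y_{\bar u}$ belong to $W^{2,1}_{\bar s}$ by \Cref{existence:strongNS}. The estimate \eqref{energy:stronglinearLP} therefore gives
\begin{align*}
\|\delta y\|_{W^{2,1}_{\bar s}} \le c\,\|\delta u\|_{L^{\bar s}(Q)^2},
\end{align*}
which is the desired inequality \eqref{state-to-controlgap}.

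The main subtlety, and the step I expect to be the principal obstacle, is keeping track of the dependence of the constant $c$ on $u$ and $\bar u$. The constant produced by \Cref{theorem:stronglinear} a priori depends on $\|y_u\|_{W^{2,1}_{\bar s}}$ and $\|y_{\bar u}\|_{W^{2,1}_{\bar s}}$ through the lower-order operator $\widetilde B(y_u,y_{\bar u})$. I would close this dependence by invoking the a priori bound \eqref{energy:strongLP} from \Cref{existence:strongNS}, so that $\|y_u\|_{W^{2,1}_{\bar s}}$ and $\|y_{\bar u}\|_{W^{2,1}_{\bar s}}$ are controlled by $\|y_0\|_{L^{\bar s}(\Omega)^2}$ and the $L^{\bar s}(Q)^2$ norms of $u,\bar u$ (which are uniformly bounded whenever $u,\bar u$ range in a bounded set, e.g.\ in $\mathcal U$ by virtue of \eqref{uniformbound:control}). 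If a fully explicit constant proves elusive via \Cref{theorem:stronglinear} alone, one can first bootstrap: use \Cref{theorem:weakexistencelinearNS} together with \eqref{ladyzhen} to control $\delta y$ in $\widetilde W$, then treat $\widetilde B(y_u,y_{\bar u})\delta y$ as a forcing in $L^{\bar s}(Q)^2$ (using the embeddings $W^{2,1}_{\bar s}\hookrightarrow C(\overline Q)^2$ and $\widetilde W\hookrightarrow L^\infty(I;H)\cap L^2(I;V)$) and apply $L^{\bar s}$-maximal regularity for the Stokes problem to upgrade to $W^{2,1}_{\bar s}$. This two-step procedure makes the constant manifestly a function of $\|u\|_{L^{\bar s}(Q)^2}$, $\|\bar u\|_{L^{\bar s}(Q)^2}$, and $\|y_0\|_{W^{2-2/\bar s,\bar s}_{0,\sigma}(\Omega)^2}$.
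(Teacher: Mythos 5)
Your argument is correct and is essentially the intended one: the paper does not reprove this lemma but cites \cite[Corollary 2.6]{CKbangbang}, whose proof likewise writes $y_u-y_{\bar u}$ as the strong solution of the linear Oseen system with coefficients $\overline y_1=y_u$, $\overline y_2=y_{\bar u}$, zero initial datum and right-hand side $u-\bar u$, and then invokes the maximal $L^{\bar s}$-regularity estimate of \Cref{theorem:stronglinear}; the paper's accompanying remark only notes that one should stop at the $W^{2,1}_{\bar s}$-versus-$L^{\bar s}$ estimate rather than continue to the $C(\overline Q)$-embedding and the interpolation $\|u-\bar u\|_{L^{\bar s}(Q)^2}\le M_{\mathcal U}\|u-\bar u\|_{L^2(Q)^2}^{2/\bar s}$ used in the original corollary. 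Your handling of the constant's dependence on $u,\bar u$ via \eqref{energy:strongLP} and \eqref{uniformbound:control} is exactly the right way to close that loop.
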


\begin{lemma}[{\cite[Lemma 3.8]{CKbangbang}}]
Let $u,\bar u\in L^{\bar{s}}(Q)^2$, and $y_u = \mathcal{S}(u)\in W^{2,1}_{\bar{s}}$ and $y_{\bar{u}} = \mathcal{S}(\bar{u})\in W^{2,1}_{\bar{s}}$ be the states respectively corresponding to $u$ and $\bar{u}$, and $z_{\bar u, u - \bar u} = S'(\bar u)(u - \bar u)\in W^{2,1}_{\bar{s}}$ be the linearized state corresponding to the Fr{\'e}chet derivative of $\mathcal{S}$ at $\bar u$ in the direction $u-\bar u$. There exists $\delta>0$ such that whenever $\|u-\bar{u}\|_{L^1({Q})^2}\le \delta$ we have
 \begin{align*}
	\| y_u - y_{\bar u} \|_{L^2(Q)^2} \le 2\|z_{\bar u, u - \bar u} \|_{L^2(Q)^2} \le 3\|y_u - y_{\bar u} \|_{L^2(Q)^2}.
\end{align*} 
\label{lemma:gap-to-linear}
\end{lemma}

\begin{lemma}[{\cite[Lemma 3.9]{CKbangbang}}]
Let $v,u,\bar{u}\in L^{\bar{s}}(Q)^2$, setting $y_{u} = \mathcal{S}(u), y_{\bar{u}} = \mathcal{S}(\bar{u}), z_{u,v} = \mathcal{S}'(u)v, z_{\bar{u},v} = \mathcal{S}'(\bar{u})v $ one finds constants $c>0$ such that the following estimates hold
 \begin{align}
	& \max\{\|z_{u,v} - z_{\bar{u},v}\|_{L^2(Q)^2}, \|\nabla(z_{u,v} - z_{\bar{u},v})\|_{L^2(Q)^2} \}\le  c\|y_u - y_{\bar{u}}\|_{C({\overline{Q}})^2}\| z_{\bar{u},v}\|_{L^2(Q)^2}
	\label{estimate:linear-controlnstate1}
\end{align} 
Furthermore, there exists $\delta>0$ such that 
 \begin{align}
	\|z_{\bar{u},v}\|_{L^2(Q)^2} \le 2\|z_{u,v}\|_{L^2(Q)^2} \le 3\|z_{\bar{u},v}\|_{L^2(Q)^2}
	\label{estimate:linear-linear}
\end{align} 
whenever $\|u - {\bar{u}}\|_{L^1({Q})^2}<\delta$.
\label{lemma:lineargap-linear}
\end{lemma}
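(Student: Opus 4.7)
The plan is to derive an Oseen equation for the difference $\xi := z_{u,v}-z_{\bar u,v}$, control its right-hand side in $L^2(I;V^*)$ by exploiting the skew-symmetry of the trilinear form $b$, and then apply the energy estimate of \Cref{theorem:weakexistencelinearNS}. Subtracting the linearized systems satisfied by $z_{u,v}$ and $z_{\bar u,v}$ (which share the same right-hand side $v$ and zero initial datum, but have coefficients $y_u$ and $y_{\bar u}$ respectively) yields
\begin{equation*}
\partial_t \xi + \nu A \xi + \widetilde{B}(y_u,y_u)\xi = F \quad\text{in } L^2(I;V^*),\qquad \xi(0)=0,
\end{equation*}
with source $F = \big[\widetilde{B}(y_{\bar u},y_{\bar u})-\widetilde{B}(y_u,y_u)\big]z_{\bar u,v} = B(y_{\bar u}-y_u,\,z_{\bar u,v}) + B(z_{\bar u,v},\,y_{\bar u}-y_u)$.

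The crucial point is to estimate $\|F\|_{L^2(I;V^*)}$ in terms of $\|z_{\bar u,v}\|_{L^2(Q)^2}$ (rather than the stronger norm $\|z_{\bar u,v}\|_{L^2(I;V)}$). To do so, I would move the gradient onto the test function via $b(a,b,c)=-b(a,c,b)$, which is legitimate because both $y_{\bar u}-y_u$ and $z_{\bar u,v}$ are divergence-free and have zero trace. For any $w\in V$ and a.e. $t\in I$,
\begin{equation*}
|b(y_{\bar u}(t)-y_u(t),\,z_{\bar u,v}(t),\,w)| + |b(z_{\bar u,v}(t),\,y_{\bar u}(t)-y_u(t),\,w)| \le 2\,\|y_u-y_{\bar u}\|_{C(\overline{Q})^2}\,\|\nabla w\|_{L^2(\Omega)^{2\times 2}}\,\|z_{\bar u,v}(t)\|_{L^2(\Omega)^2},
\end{equation*}
so that $\|F\|_{L^2(I;V^*)}\le 2\|y_u-y_{\bar u}\|_{C(\overline Q)^2}\|z_{\bar u,v}\|_{L^2(Q)^2}$. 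Applying \eqref{energy:weaklinearNS} to $\xi$ gives $\|\xi\|_{\widetilde W}\le c\|y_u-y_{\bar u}\|_{C(\overline Q)^2}\|z_{\bar u,v}\|_{L^2(Q)^2}$; since $\|\xi\|_{L^2(Q)^2}$ and $\|\nabla\xi\|_{L^2(Q)^2}=\|\xi\|_{L^2(I;V)}$ are both dominated by $\|\xi\|_{\widetilde W}$, the estimate \eqref{estimate:linear-controlnstate1} follows.

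For \eqref{estimate:linear-linear} I would combine \eqref{estimate:linear-controlnstate1} with a smallness argument. Using the bound $\|u-\bar u\|_{L^{\bar s}(Q)^2}\le (2M_{\mathcal U})^{1-1/\bar s}\|u-\bar u\|_{L^1(Q)^2}^{1/\bar s}$ available on $\mathcal U$, then \Cref{lemma:state-to-controlgap}, and finally the compact embedding $W^{2,1}_{\bar s}\hookrightarrow C(\overline Q)^2$ (valid for $\bar s>2$), one can pick $\delta>0$ small enough so that $\|u-\bar u\|_{L^1(Q)^2}<\delta$ forces the prefactor $c\|y_u-y_{\bar u}\|_{C(\overline Q)^2}$ in \eqref{estimate:linear-controlnstate1} to be at most $1/2$. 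The triangle inequality then gives $\tfrac12\|z_{\bar u,v}\|_{L^2(Q)^2}\le\|z_{u,v}\|_{L^2(Q)^2}\le\tfrac32\|z_{\bar u,v}\|_{L^2(Q)^2}$, which is \eqref{estimate:linear-linear} after multiplying by $2$. The main obstacle is precisely the dual-space estimate of $F$: a direct bound would place a derivative on $z_{\bar u,v}$, producing $\|z_{\bar u,v}\|_{L^2(I;V)}$ on the right-hand side and spoiling the sharpness of the statement; the integration-by-parts trick enabled by the divergence-free structure is what replaces this by the weaker $L^2(Q)^2$-norm.
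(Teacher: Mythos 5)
Your argument is correct, and it is the standard one: the paper itself does not prove this lemma but defers to \cite[Lemma 3.9]{CKbangbang} with minor tweaks, and your route (write the Oseen equation for $\xi=z_{u,v}-z_{\bar u,v}$ with source $F=B(y_{\bar u}-y_u,z_{\bar u,v})+B(z_{\bar u,v},y_{\bar u}-y_u)$, throw the derivative onto the test function via the skew-symmetry of $b$ to get $\|F\|_{L^2(I;V^*)}\le 2\|y_u-y_{\bar u}\|_{C(\overline Q)^2}\|z_{\bar u,v}\|_{L^2(Q)^2}$, then invoke the energy estimate \eqref{energy:weaklinearNS}) is exactly the mechanism that makes the right-hand side close in $\|z_{\bar u,v}\|_{L^2(Q)^2}$ rather than $\|z_{\bar u,v}\|_{L^2(I;V)}$. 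Two small points worth making explicit: the constant in \eqref{energy:weaklinearNS} depends on the coefficient $y_u$ through a Gronwall factor, so you should note it is uniform because $\|y_u\|_{\widetilde W}$ is bounded uniformly over $u\in\mathcal U$ by \eqref{energy:weakNS} and \eqref{uniformbound:control}; and the step $\|u-\bar u\|_{L^{\bar s}(Q)^2}\le (2M_{\mathcal U})^{1-1/\bar s}\|u-\bar u\|_{L^1(Q)^2}^{1/\bar s}$ requires $u,\bar u\in\mathcal U$ rather than merely $u,\bar u\in L^{\bar s}(Q)^2$ as literally stated in the lemma --- which is how the lemma is used throughout the paper, and is the same implicit restriction made in the cited reference.
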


\begin{remark}
We note that the lemmata above have been proven analogously in \cite{CKbangbang}, but of course with a few changes on the proofs. We decided not to write the full proofs but we mention a few tweaks from the proofs presented in the said reference to arrive in their current states:
\begin{itemize}
\item[$\bullet$] Lemma \ref{lemma:state-to-controlgap}: one can skip using the embedding $W_{\bar{s}}^{2,1}\hookrightarrow C(\overline{Q})^2$ and can stop before using the inequality $\|u - \bar u\|_{L^{\bar{s}}(Q)^2}\le M_{\mathcal{U}}\|u - \bar u\|_{L^2(Q)^2}^{2/{\bar{s}}}$.
\item[$\bullet$] Lemma \ref{lemma:gap-to-linear}: choose $\varepsilon = \frac{1}{2\bar M\sqrt{T}}$ instead of that one presented in \cite[Lemma 3.8]{CKbangbang}.
\item[$\bullet$] Lemma \ref{lemma:lineargap-linear}: one can control the gap of the states by controlling the gap of the controls to achieve \eqref{estimate:linear-linear} from inequality (3.24) in \cite{CKbangbang}.  {We also mention that in the proof \cite[Lemma 3.9]{CKbangbang}, the authors extracted the term $y_u - y_{\bar{u}}$ from the trilinear form via its boundedness in $L^\infty(Q)^2$, but we point out that --- because of the embedding $W^{2,1}_{\bar{s}}\hookrightarrow C(\overline{Q})^2$ --- we have the liberty to use the supremum norm instead of the essential supremum.}
\end{itemize}
\end{remark}

\begin{lemma}\label{adjointgap}
Let $u,\bar{u}\in L^{\bar{s}}(\Omega)^2$, $y_u = \mathcal{S}(u), y_{\bar{u}} = \mathcal{S}(\bar{u}), w_u = \mathcal{S}'(u)^*(y_u-y_d),w_{\bar{u}} = \mathcal{S}'(\bar{u})^*(y_{\bar{u}}-y_d)\in W^{2,1}_{\bar{s}}$. There exists $c>0$ such that the following inequality hold
 \begin{align}
	&\begin{aligned}
		\|w_u - w_{\bar{u}}\|_{C(\overline I;C^1(\overline{\Omega})^2)} \le  c(1+\|y_{\bar{u}} - y_{d} \|_{L^s({Q})^2})\|y_u - y_{\bar{u}} \|_{C(\overline{Q})^2},
	\end{aligned}\label{estimate:adjdiff2}
\end{align} 
for arbitrary $s>4$.
\end{lemma}

\begin{proof}
 {From the assumptions $y_u - y_d, y_{\bar{u}} - y_d \in L^s(Q)^2$ for any $s>4$, because $y_u,y_{\bar{u}}\in C(\overline{Q})^2$ and $y_d\in L^\infty(Q)^2$. This implies --- by virtue of \Cref{th:stradjoint} and the embedding $W^{2,1}_s\hookrightarrow C(\overline{I};C^1(\overline{Q})^2)$ for $s>4$ --- that $w_u = \mathcal{S}'(u)^*(y_u-y_d)$ satisfies
 \begin{align}\label{cc2:p_v}
		\|w_{\bar{u}}\|_{C(\overline{I};C^1(\overline{\Omega})^2)} \le c \|y_{\bar{u}}-y_d \|_{L^s(Q)^2}.
\end{align} }

 {Since the element $\mathfrak{w}:= w_u-w_{\bar{u}}\in W^{2,1}_s$ solves
 \begin{align*}
		\begin{aligned}
			-\partial_t\mathfrak{w} + \nu A\mathfrak{w} + B'(y_u)^*\mathfrak{w} + \nabla\mathfrak{r} & = (y_u-y_{\bar{u}}) - B'(y_u-y_{\bar{u}})^*w_{\bar{u}} && \text{in }L^s(Q)^2,\\
			\mathfrak{w}(T) & = 0 &&\text{in }W^{2-2/s,s}_0(\Omega)^2,
		\end{aligned}
\end{align*} 
we see, from \Cref{th:stradjoint}, that 
 \begin{align}
		\begin{aligned}
			\|\mathfrak{w}\|_{C(I;C^1(\overline{\Omega}))} \le&\, 
			\left\|y_u-y_{\bar{u}}\right\|_{L^s(Q)^2} + \left\| B'(y_u-y_{\bar{u}})^*w_{\bar{u}}\right\|_{L^s(Q)^2}\\
			\le&\, |\Omega|^{1/s}\left\|y_u-y_{\bar{u}}\right\|_{C(\overline{Q})^2} + \left\| B'(y_u-y_{\bar{u}})^*w_{\bar{u}}\right\|_{L^s(Q)^2}
		\end{aligned}\label{initestimate:adjdiff}
\end{align} 
The second term on \eqref{initestimate:adjdiff} is estimated by taking an arbitrary element $v\in L^{s'}(Q)^2$ where $s'$ is the H{\"o}lder conjugate of $s$, and majorizing $\int_0^T\langle B'(y_u-y_{\bar{u}})^*w_{\bar{u}},v\rangle\du t$. H{\"o}lder inequality, and estimates \eqref{cc2:p_v} and \eqref{state-to-controlgap} thus yield
 \begin{align*}
		\begin{aligned}
			&\left|\langle B'(y_u-y_{\bar{u}})^*w_{\bar{u}},v\rangle_{L^2(V)} \right| = \left| (((y_u-y_{\bar{u}})\cdot\nabla) w_{\bar{u}}, v)_Q + ((v\cdot\nabla)w_{\bar{u}}, (y_u-y_{\bar{u}}))_Q \right|\\
			& \le 2c\|y_u-y_{\bar{u}}\|_{L^s(Q)^2}\| w_{\bar{u}}\|_{C(I;C^1(\overline{\Omega}))}\|v \|_{L^{s'}(Q)^2}\\
			& \le 2c|\Omega|^{1/s} \|y_{\bar{u}}-y_d \|_{L^s(Q)^2}\|y_u-y_{\bar{u}}\|_{C(\overline{Q})^2}\|v \|_{L^{s'}(Q)^2}
		\end{aligned}
\end{align*} 
We thus get \eqref{estimate:adjdiff2} by taking $c:=|\Omega|^{1/s} \max\{1, 2c\}$.}  
\end{proof}

 {\begin{remark}
If $\bar{u}\in \mathcal{U}$, we can use \eqref{uniformbound:control} so that 
 \begin{align}
		&\begin{aligned}
			\|w_u - w_{\bar{u}}\|_{C(\overline I;C^1(\overline{\Omega})^2)} \le  c\|y_u - y_{\bar{u}} \|_{C(\overline{Q})^2},
		\end{aligned}\label{estimate:adjdiff1}
\end{align} 
for some constant $c>0$ independent of $u$ and $\bar{u}$.
\end{remark}}

\vspace{.1 in}\noindent{\bf First-order necessary condition.}
Let us begin recalling the first and second variations of the objective functional.
\begin{theorem}\label{derJ}
The objective functional $\mathcal J:  {L^{\bar{s}}(\Omega)^2}\to \mathbb R$ is of class $C^\infty$. Moreover, the following statements hold. 
\begin{itemize}
\item[(i)] The first variation is given by 
 \begin{align*}
		\mathcal J'(u)v=\int_Q w_{u}\cdot v\, dx\, dt\quad\text{for } u,v\in L^{\bar s}(Q)^2,
\end{align*} 
where $w_u: = \mathcal{S}'(u)^*(y_u - y_d)\in W_{\bar s}^{2,1}$.

\item[(i)] The second variation  is given by 
 \begin{align*}
		\mathcal J''(u)v^2=\int_Q \Big[|z_{u,v}|^2-2\big(z_{u,v}\cdot \nabla\big) z_{u,v}\cdot w_u\Big]\, dx\, dt\quad\text{for } u,v\in L^{\bar s}(Q)^2.
\end{align*} \label{J:secondder}
\end{itemize}
where $z_{u,v} = \mathcal{S}'(u)v\in W^{2,1}_{\bar{s}}$.
\end{theorem}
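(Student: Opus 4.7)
The plan is to write $\mathcal{J}$ as the composition $\mathcal{J}(u) = \tfrac{1}{2}\|\mathcal{S}(u)-y_d\|_{L^2(Q)^2}^2$ and then apply the chain rule using the smoothness of the force-to-velocity mapping established in \Cref{thmctso}. Since $\mathcal S:\mathcal U \to W^{2,1}_{\bar s}$ is of class $C^\infty$, the embedding $W^{2,1}_{\bar s}\hookrightarrow L^2(Q)^2$ is continuous, and the map $y\mapsto \tfrac12\|y-y_d\|_{L^2(Q)^2}^2$ is smooth on $L^2(Q)^2$, the composition is $C^\infty$. This handles the regularity claim and yields immediately the ``primal'' form of the first variation
\begin{align*}
\mathcal J'(u)v = (y_u-y_d,\,\mathcal{S}'(u)v)_Q = (y_u-y_d,\,z_{u,v})_Q.
\end{align*}

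To convert this into the claimed expression, I would invoke the definition of the adjoint of $\mathcal S'(u)\in \mathcal L(L^{\bar s}(Q)^2,W^{2,1}_{\bar s})$ viewed in the appropriate dual pairing. Since $y_u-y_d\in L^\infty(Q)^2\subset L^{\bar s'}(Q)^2$, the element $w_u:=\mathcal S'(u)^\ast(y_u-y_d)$ is well-defined, and by construction it satisfies the adjoint Oseen system \eqref{weakop:adjoint} with $\overline y_1=\overline y_2=y_u$ and right-hand side $y_u-y_d$. By \Cref{th:stradjoint} it belongs to $W^{2,1}_{\bar s}$. Thus
$(y_u-y_d,z_{u,v})_Q=\langle \mathcal S'(u)^\ast(y_u-y_d),v\rangle = \int_Q w_u\cdot v\,dx\,dt$, which gives (i).

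For the second variation, I would differentiate the map $u\mapsto\mathcal J'(u)v$ in the direction $v$, again using chain rule plus \Cref{thmctso}(ii):
\begin{align*}
\mathcal J''(u)v^2 \;=\; \int_Q |z_{u,v}|^2\,dx\,dt \;+\; \int_Q (y_u-y_d)\cdot z_{u,(v,v)}\,dx\,dt,
\end{align*}
where $z_{u,(v,v)}=\mathcal S''(u)(v,v)$ solves the linearised Oseen system with right-hand side $-B''(y_u)[z_{u,v},z_{u,v}]=-2B(z_{u,v},z_{u,v})$ (since $y\mapsto B(y,y)$ is quadratic) and zero initial datum. Testing this equation against $w_u$ and testing the adjoint equation for $w_u$ against $z_{u,(v,v)}$, then subtracting, yields — after using $z_{u,(v,v)}(0)=0$ and $w_u(T)=0$ to kill the boundary terms and using the divergence-free condition to discard the pressure terms — the identity
\begin{align*}
\int_Q (y_u-y_d)\cdot z_{u,(v,v)}\,dx\,dt \;=\; -2\int_0^T b(z_{u,v},z_{u,v},w_u)\,dt \;=\; -2\int_Q ((z_{u,v}\cdot\nabla)z_{u,v})\cdot w_u\,dx\,dt.
\end{align*}
Plugging this in produces the announced formula for $\mathcal J''(u)v^2$.

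The main technical obstacle is the integration-by-parts step: justifying that the test functions live in the right spaces for the duality pairings to make sense, and that the pressure terms indeed drop out. Here one uses that $z_{u,v}, z_{u,(v,v)} \in W^{2,1}_{\bar s}$ (\Cref{thmctso}), that $w_u\in W^{2,1}_{\bar s}$ (\Cref{th:stradjoint}, since $y_u-y_d\in L^\infty(Q)^2$), and that the trilinear form $b$ is well-defined on these spaces thanks to \eqref{ladyzhen} and the embedding $W^{2,1}_{\bar s}\hookrightarrow C(\overline I;C^1(\overline\Omega)^2)$ for $\bar s>4$; the divergence-free condition $\dive z_{u,(v,v)}=\dive w_u=0$ then makes the pressure contributions vanish. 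Everything else reduces to applications of the chain rule already encoded in \Cref{thmctso}.
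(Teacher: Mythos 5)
Your argument is correct and coincides with the standard chain-rule-plus-adjoint-duality computation that the paper implicitly relies on: Theorem \ref{derJ} is stated without proof (the result is classical for the velocity-tracking problem, cf.\ \cite{CC2017}), and your steps — smoothness of $\mathcal S$ from \Cref{thmctso}, the primal identity $\mathcal J'(u)v=(y_u-y_d,z_{u,v})_Q$, and the integration by parts pairing the second-order linearized system against $w_u$ together with the quadratic identity $B''(y_u)[z_{u,v},z_{u,v}]=2B(z_{u,v},z_{u,v})$ — are exactly how that gap is filled. The only cosmetic slip is invoking the embedding $W^{2,1}_{\bar s}\hookrightarrow C(\overline I;C^1(\overline\Omega)^2)$, which requires $\bar s>4$ whereas the paper only fixes $\bar s>2$; it is not needed here, since \eqref{ladyzhen} already makes the trilinear term well-defined for $z_{u,v}\in\widetilde W$ and $w_u\in L^2(I;V)$.
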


We are now in a position to lay down the first-order necessary condition for problem \eqref{opticon}.
\begin{theorem}
Let $ \bar u\in\mathcal U$ be a local minimizer of problem \eqref{opticon}. Then
 \begin{align}\label{firstor}
	\int_Q w_{\bar u}\cdot\big(u-\bar u\big)\, dx\,dt\ge 0\quad\text{for all } u\in\mathcal U.
\end{align} 
\end{theorem}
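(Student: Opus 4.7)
The plan is to follow the classical variational argument for convex admissible sets, exploiting that $\mathcal{U}$ is convex (it is a box in $L^\infty(Q)^2$) and that $\mathcal{J}$ is Fr\'echet differentiable on $\mathcal{U}$ by Theorem \ref{derJ}. The whole argument reduces to a one-sided directional derivative computation along admissible line segments emanating from $\bar u$.

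First, I would fix an arbitrary $u\in\mathcal{U}$ and, for $t\in(0,1]$, set $u_t := \bar u + t(u-\bar u)$. Convexity of $\mathcal{U}$ immediately gives $u_t\in\mathcal{U}$. Next, since $\|u_t-\bar u\|_{L^1(Q)^2} = t\|u-\bar u\|_{L^1(Q)^2}$, for all sufficiently small $t>0$ we have $\|u_t-\bar u\|_{L^1(Q)^2}\le \bar r_{\bar u}$, so local minimality of $\bar u$ yields $\mathcal{J}(\bar u)\le \mathcal{J}(u_t)$, i.e.
\begin{equation*}
\frac{\mathcal{J}(u_t)-\mathcal{J}(\bar u)}{t}\ge 0.
\end{equation*}

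Now I would pass to the limit $t\downarrow 0$. Because $\mathcal{J}$ is of class $C^\infty$ on $\mathcal{U}$ by Theorem \ref{derJ}, its restriction to the segment $[\bar u,u]$ is differentiable from the right at $\bar u$ with one-sided derivative $\mathcal{J}'(\bar u)(u-\bar u)$. Hence
\begin{equation*}
\mathcal{J}'(\bar u)(u-\bar u) = \lim_{t\downarrow 0}\frac{\mathcal{J}(u_t)-\mathcal{J}(\bar u)}{t}\ge 0.
\end{equation*}
Inserting the explicit expression for $\mathcal{J}'(\bar u)$ from Theorem \ref{derJ}(i), namely $\mathcal{J}'(\bar u)v = \int_Q w_{\bar u}\cdot v\,dx\,dt$ with $w_{\bar u}=\mathcal{S}'(\bar u)^\ast(y_{\bar u}-y_d)$, and noting that $v=u-\bar u$ is admissible in the sense of that theorem since $\bar u+v = u\in\mathcal{U}$, yields exactly \eqref{firstor}.

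There is essentially no obstacle here: all the heavy lifting (differentiability of $\mathcal{S}$, the adjoint representation that produces $w_{\bar u}$, and the membership $u-\bar u\in L^{\bar s}(Q)^2$ guaranteed by the $L^\infty$-bound on $\mathcal{U}$) has already been absorbed into Theorems \ref{thmctso} and \ref{derJ}. The only mild point worth checking is the admissibility of the difference quotient: one must verify that for arbitrary $u\in\mathcal{U}$ the increment $t(u-\bar u)$ keeps $u_t$ inside $\mathcal{U}$, which follows at once from the pointwise box structure $u_a\le (1-t)\bar u+tu\le u_b$. Since $u\in\mathcal{U}$ was arbitrary, the variational inequality \eqref{firstor} holds for all admissible $u$, completing the argument.
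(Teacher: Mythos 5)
Your proposal is correct and is essentially the argument the paper relies on: the paper derives \eqref{firstor} as an immediate consequence of the abstract first-order condition $\mathcal J'(\bar u)(u-\bar u)\ge 0$ for Gateaux differentiable functionals on convex sets (Proposition \ref{Fonc}), combined with the representation of $\mathcal J'(\bar u)$ from Theorem \ref{derJ}. Your difference-quotient computation along the segment $\bar u+t(u-\bar u)$ simply spells out the standard proof of that abstract condition, so there is no substantive difference.
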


We can immediately see that \eqref{firstor} is a direct consequence of $\mathcal{J}'(\bar u)(u-\bar{u}) \ge 0$ whenever $\bar{u}\in\mathcal{U}$ is a minimizer. Furthermore, from the first order necessary condition a local minimizer $\bar u=(\bar u_1,\bar u_2)$ must satisfy 
 \begin{align*}
\bar u^j(x,t)=\left\{ \begin{array}{l}
	\bar u_{a}^j(x,t)\,\,\,\,\text{if $w_{\bar u}^j(x,t)>0$}\\
	\\ \bar u_{b}^j(x,t)\,\,\,\,\text{if $w_{\bar u}^j(x,t)<0$},
\end{array}
\right.
\end{align*} 
for $j\in\{1,2\}$ and a.e. $(t,x )\in Q$. It is not hard to deduce from the first order necessary condition that a control $\bar u$ is bang-bang if and only if
 \begin{align*}
\meas\bigl\{(x,t)\in Q:\, w_{\bar u}^1(x,t)=0\,\,\,\text{or}\,\,\, w_{\bar u}^2(x,t)=0\bigr\}=0.
\end{align*} 
We recall that a control $\bar u$ is said to be bang-bang if $\bar u^j(x,t)\in\{u_{a}^j(x,t),u_{b}^j(x,t)\}$ for a.e. $(x,t)\in Q$.

\vspace{.1 in}\noindent{\bf A second-order sufficient condition.}	
In order to establish the second-order sufficient condition, we shall need the following lemma, which deals with the curvature of the objective functional. Aside from the sufficient condition the lemma below will be vital for the stability result in the upcoming section.
\begin{lemma}\label{curveest}
Let $\mu\in[1,2)$, $\theta \in [0,1]$ and $\bar u\in\mathcal U$. For every $\varepsilon>0$ there exists $\delta>0$ such that 
 \begin{align*}
|\mathcal J''(\bar{u} + \theta(u-\bar u))(u-\bar u)^2-\mathcal J''(\bar u)(u-\bar u)^2|\le \varepsilon \|u-\bar u\|_{L^{1}(Q)^2}^{\mu+1},
\end{align*} 
for all $u\in\mathcal U$ with $\|u-\bar u\|_{L^1(Q)^2}\le \delta$.
\end{lemma}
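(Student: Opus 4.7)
Set $v:=u-\bar u$, $u_\theta:=\bar u+\theta v$, and let $z:=\mathcal S'(\bar u)v$, $z_\theta:=\mathcal S'(u_\theta)v$, $w:=w_{\bar u}$, $w_\theta:=w_{u_\theta}$, $\bar y:=y_{\bar u}$, $y_\theta:=y_{u_\theta}$. By Theorem~\ref{J:secondder}, the quantity to estimate equals
\[
\int_Q\bigl(|z_\theta|^2-|z|^2\bigr)\,dx\,dt\;-\;2\int_Q\bigl[(z_\theta\!\cdot\!\nabla)z_\theta\cdot w_\theta-(z\!\cdot\!\nabla)z\cdot w\bigr]\,dx\,dt.
\]
The plan is to decompose each of these integrals into pieces that factor into (a) a ``smallness'' term controlled by the state/adjoint gaps $\|y_\theta-\bar y\|_{C(\overline Q)^2}$ and $\|w_\theta-w\|_{C(\overline I;C^1(\overline\Omega)^2)}$, and (b) a ``size'' term controlled by $\|z\|_{L^2(Q)^2}$ or $\|z_\theta-z\|_{L^2(Q)^2}$, and then to use the $L^{\tilde s}$--$L^1$ stability of Lemma~\ref{lemma:lsl1} to convert each factor into an explicit power of $\|v\|_{L^1(Q)^2}$.

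For the first integral I would use $|z_\theta|^2-|z|^2 = 2z\cdot(z_\theta-z)+|z_\theta-z|^2$ and H\"older to bound it by $2\|z\|_{L^2}\|z_\theta-z\|_{L^2}+\|z_\theta-z\|_{L^2}^2$. The second integral I split into $-2\int_Q(z_\theta\!\cdot\!\nabla)z_\theta\cdot(w_\theta-w)$ and $-2\int_Q[(z_\theta\!\cdot\!\nabla)z_\theta-(z\!\cdot\!\nabla)z]\cdot w$; integration by parts (legitimate since $\dive z=\dive z_\theta=0$) rewrites the first of these as $2\int_Q(z_\theta\!\cdot\!\nabla)(w_\theta-w)\cdot z_\theta$, majorized by $\|\nabla(w_\theta-w)\|_{L^\infty}\|z_\theta\|_{L^2}^2$. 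For the remaining piece I write $z_\theta=z+d$ with $d:=z_\theta-z$, expand the convective term, and integrate the cross terms by parts so that each summand is dominated by $\|\nabla w\|_{L^\infty}$ times $\|z\|_{L^2}\|d\|_{L^2}$ or $\|d\|_{L^2}^2$.

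Now I assemble the ingredients. Lemma~\ref{lemma:state-to-controlgap}, the interpolation $\|v\|_{L^{\bar s}(Q)^2}\le(2M_{\mathcal U})^{1-1/\bar s}\|v\|_{L^1(Q)^2}^{1/\bar s}$ (granted by $v\in L^\infty(Q)^2$), and the embedding $W^{2,1}_{\bar s}\hookrightarrow C(\overline Q)^2$ together give $\|y_\theta-\bar y\|_{C(\overline Q)^2}\le c\|v\|_{L^1(Q)^2}^{1/\bar s}$; estimate~\eqref{estimate:adjdiff1} then yields $\|w_\theta-w\|_{C(\overline I;C^1(\overline\Omega)^2)}\le c\|v\|_{L^1(Q)^2}^{1/\bar s}$, and Lemma~\ref{lemma:lineargap-linear} delivers $\|z_\theta-z\|_{L^2}\le c\|v\|_{L^1(Q)^2}^{1/\bar s}\|z\|_{L^2}$. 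The decisive input is Lemma~\ref{lemma:lsl1}, which gives $\|z\|_{L^{\tilde s}(Q)^2}\le c\|v\|_{L^1(Q)^2}$ for any $\tilde s\in[1,2)$; interpolating this against the uniform bound $\|z\|_{L^\infty(Q)^2}\le c$ produces $\|z\|_{L^2(Q)^2}^2\le c\|v\|_{L^1(Q)^2}^{\tilde s}$ for $\tilde s$ as close to~$2$ as desired.

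Combining the above, each piece is bounded by $c\|v\|_{L^1(Q)^2}^{1/\bar s+\tilde s}$, and choosing $\delta$ so that $c\delta^{1/\bar s+\tilde s-(\mu+1)}\le\varepsilon$ closes the argument whenever $1/\bar s+\tilde s>\mu+1$. The main obstacle is extending this to $\mu$ close to~$2$, since the naive exponent $1/\bar s+\tilde s$ is bounded above by~$5/2$. To bridge this I plan to bootstrap Lemma~\ref{lemma:lsl1} on the Oseen equation satisfied by $d$ itself, whose source $-((y_\theta-\bar y)\!\cdot\!\nabla)z-(z\!\cdot\!\nabla)(y_\theta-\bar y)$ already contributes a factor of order $\|v\|_{L^1}^{1-\eta}$; this upgrades $\|d\|_{L^2}$ to order $\|v\|_{L^1}^{3/2-\eta}$ and, coupled with $\|z\|_{L^2}\le c\|v\|_{L^1}^{1-\eta}$, drives the overall exponent toward~$3$, thereby covering the full range $\mu\in[1,2)$.
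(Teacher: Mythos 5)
Your decomposition of $[\mathcal J''(u_\theta)-\mathcal J''(\bar u)](u-\bar u)^2$ into the quadratic part and the convective part, the integration by parts using $\dive z=\dive z_\theta=0$ to move derivatives onto the adjoint states, and the conversion of each factor into a power of $\|v\|_{L^1(Q)^2}$ via Lemma~\ref{lemma:lsl1} and interpolation against a sup-norm bound is essentially the route the paper takes. Up to the point where you obtain an exponent bounded by $5/2$ (equivalently $\mu<3/2$), your argument is sound and coincides with the paper's, which reaches the exponent $\tfrac{3-\tilde s}{\tilde s'}+\tilde s<5/2$ by keeping $\|z_{\bar u,v}\|_{C(\overline Q)^2}^{2-\tilde s}\le c\|v\|_{L^{\tilde s'}(Q)^2}^{2-\tilde s}$ rather than a uniform bound --- a cosmetic difference.

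The gap is the final bootstrap, which is both unproven and, as described, cannot work. To apply Lemma~\ref{lemma:lsl1} to the equation for $d=z_\theta-z$ you need an $L^1$ bound on the source $-((y_\theta-\bar y)\cdot\nabla)z-(z\cdot\nabla)(y_\theta-\bar y)$; the first summand forces you to estimate $\|\nabla z\|_{L^1(Q)^2}$, and the only available bound is $\|\nabla z\|_{L^2(Q)^2}\le c\|v\|_{L^2(Q)^2}=O(\|v\|_{L^1}^{1/2})$ (Lemma~\ref{lemma:lsl1} controls $z$ but not $\nabla z$ in terms of $\|v\|_{L^1}$), while $\|y_\theta-\bar y\|_{C(\overline Q)^2}$ is only $O(\|v\|_{L^1}^{1/2-\eta})$ because the sup-norm bound requires the force in $L^s$ with $s>2$. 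The source is therefore $O(\|v\|_{L^1}^{1-\eta})$ and the bootstrap returns only $\|d\|_{L^2}=O(\|v\|_{L^1}^{1-\eta})$, which is \emph{worse} than the $O(\|v\|_{L^1}^{3/2-\eta})$ you already get directly from \eqref{estimate:linear-controlnstate1} combined with $\|z\|_{L^2}=O(\|v\|_{L^1}^{1-\eta})$. Moreover, even granting $\|d\|_{L^2}=O(\|v\|^{3/2-\eta})$, the assembled terms $\|z\|_{L^2}\|d\|_{L^2}$ and $\|z_\theta\|_{L^2}^2\|w_{u_\theta}-w_{\bar u}\|_{C(\overline I;C^1(\overline\Omega)^2)}$ are each capped at $O(\|v\|^{5/2-\eta})$, since $\|z\|_{L^2}$ cannot beat $O(\|v\|^{1-\eta})$ and the adjoint gap cannot beat $O(\|v\|^{1/2-\eta})$; the overall exponent therefore does not approach $3$. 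To be fair, the paper's own computation reaches exactly the same $5/2$ ceiling, and its concluding assertion that $\mu\in(1,2)$ does not follow from it (it yields $\mu\in(1,3/2)$); but your proposal does not close that gap either, and the claim that the bootstrap covers the full range $\mu\in[1,2)$ is unsubstantiated.
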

The proof is postponed to the end of this section but we now give the promised sufficient condition.
\begin{theorem}\label{theorem:suffsec}
Let $\mu\in[1,2)$ and $\bar u\in\mathcal U$.  Suppose that there exists $\delta>0$ such that
 \begin{align}\label{growthasunec}
\mathcal J'(\bar u)(u-\bar u)+\frac{1}{2}\mathcal J''(\bar u)(u-\bar u)^2\ge \|u-\bar u\|_{L^1(Q)^2}^{\mu+1}\quad \text{for all $u\in\mathcal U$ with $\|u-\bar u\|_{L^1(Q)^2}\le\delta$.}
\end{align} 
Then there exists $c>0$ such that
 \begin{align}\label{suffop}
\mathcal J(u)\ge \mathcal J(\bar u)+c\|u-\bar u\|_{L^1(Q)^2}^{\mu+1}\quad\text{for all $u\in\mathcal U$ with $\|u-\bar u\|_{L^1(Q)^2}\le \delta$}.
\end{align} 
In particular, $\bar u$ is a strict local minimizer.
\end{theorem}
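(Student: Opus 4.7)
The plan is to combine a second-order Taylor expansion of $\mathcal{J}$ around $\bar{u}$ with the curvature estimate provided by \Cref{curveest}, and then invoke the growth hypothesis \eqref{growthasunec} to absorb the leading terms.

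First, since $\mathcal{J}$ is of class $C^\infty$ by \Cref{derJ} and $\mathcal{U}$ is convex, for each $u \in \mathcal{U}$ Taylor's theorem with integral (or mean-value) remainder yields $\theta = \theta(u) \in (0,1)$ such that
\begin{align*}
\mathcal{J}(u) = \mathcal{J}(\bar u) + \mathcal{J}'(\bar u)(u-\bar u) + \tfrac{1}{2}\mathcal{J}''(\bar u + \theta(u-\bar u))(u-\bar u)^2.
\end{align*}
Adding and subtracting $\tfrac{1}{2}\mathcal{J}''(\bar u)(u-\bar u)^2$, I rewrite this as
\begin{align*}
\mathcal{J}(u)-\mathcal{J}(\bar u) = \Big[\mathcal{J}'(\bar u)(u-\bar u)+\tfrac{1}{2}\mathcal{J}''(\bar u)(u-\bar u)^2\Big] + \tfrac{1}{2}\Big[\mathcal{J}''(\bar u+\theta(u-\bar u))(u-\bar u)^2 - \mathcal{J}''(\bar u)(u-\bar u)^2\Big].
\end{align*}

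Next, the hypothesis \eqref{growthasunec} bounds the first bracket from below by $\|u-\bar u\|_{L^1(Q)^2}^{\mu+1}$ on $\{u \in \mathcal{U} : \|u-\bar u\|_{L^1(Q)^2} \le \delta\}$. For the second bracket, I apply \Cref{curveest} with $\varepsilon = 1/2$ and the same exponent $\mu$: there exists $\delta' \in (0,\delta]$ such that for all $u \in \mathcal{U}$ with $\|u-\bar u\|_{L^1(Q)^2} \le \delta'$, and for every $\theta \in [0,1]$ (in particular the one given by Taylor),
\begin{align*}
\big|\mathcal{J}''(\bar u+\theta(u-\bar u))(u-\bar u)^2 - \mathcal{J}''(\bar u)(u-\bar u)^2\big| \le \tfrac{1}{2}\|u-\bar u\|_{L^1(Q)^2}^{\mu+1}.
\end{align*}

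Combining these two bounds on the smaller ball gives
\begin{align*}
\mathcal{J}(u)-\mathcal{J}(\bar u) \;\ge\; \|u-\bar u\|_{L^1(Q)^2}^{\mu+1} - \tfrac{1}{4}\|u-\bar u\|_{L^1(Q)^2}^{\mu+1} \;=\; \tfrac{3}{4}\|u-\bar u\|_{L^1(Q)^2}^{\mu+1},
\end{align*}
which is \eqref{suffop} with $c = 3/4$ (after replacing $\delta$ by $\delta'$). Since the right-hand side is strictly positive whenever $u \ne \bar u$, $\bar u$ is a strict local minimizer in the $L^1(Q)^2$ sense. I do not anticipate a genuine obstacle here: the only non-trivial ingredient is \Cref{curveest}, whose proof the paper defers, and everything else is a clean bookkeeping application of Taylor's theorem on the convex set $\mathcal{U}$.
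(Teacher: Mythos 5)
Your proof is correct and follows essentially the same route as the paper's: Taylor expansion at $\bar u$, insertion of $\tfrac{1}{2}\mathcal J''(\bar u)(u-\bar u)^2$, the growth hypothesis \eqref{growthasunec} for the main term, and Lemma \ref{curveest} to absorb the curvature gap. The only difference is that you make explicit the choice $\varepsilon=1/2$ and the resulting constant $c=3/4$ and the shrinking of $\delta$, which the paper leaves implicit.
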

\begin{proof}
Define $\bar u_\theta:=\bar u+\theta(u-\bar u)$ for $\theta\in [0,1]$. By the Taylor theorem, there exists a $\theta\in (0,1)$ such that 
 \begin{align*}
\mathcal J(u)-\mathcal J(\bar u)&=\mathcal J'(\bar u)(u-\bar u)+\frac{1}{2}\mathcal J''(\bar u_\theta)(u-\bar u)^2\\
&\geq \mathcal J'(\bar u)(u-\bar u)+\frac{1}{2}\mathcal J''(\bar u)(u-\bar u)^2-\frac{1}{2}\Big\vert \mathcal J''(\bar u)(u-\bar u)^2-\mathcal J''(\bar u_\theta)(u-\bar u)^2\Big\vert .
\end{align*} 
Now \eqref{suffop} follows from \eqref{growthasunec} and Lemma \ref{curveest}.  
\end{proof}

We conclude with the proof of Lemma \ref{curveest}.

\begin{proof}[Proof of Lemma \ref{curveest}]
Denote by $u_\theta = \bar{u} + \theta(u-\bar u)$, $y_\theta = \mathcal{S}(u_\theta)$, $y_{\bar{u}} = \mathcal{S}(\bar{u})$, $z_{u_\theta,u-\bar{u}} = \mathcal{S}'(u_\theta)(u-\bar{u})$, $z_{\bar{u},u-\bar{u}} = \mathcal{S}'(\bar{u})(u-\bar{u})$, and $w_{u_\theta} = \mathcal{D}(u_\theta)$. According to \Cref{J:secondder}, we get
 \begin{align*}
\begin{aligned}
	& \vert [\mathcal{J}''(u_\theta)-\mathcal{J}''(\bar{u})](u-\bar{u})^2\vert =  \int_Q\left\{ |z_{u_\theta,u-\bar{u}}|^2 -  |z_{\bar{u},u-\bar{u}}|^2 \right\}\du x\du t\\ 
	&+ 2\big(\big(z_{\bar{u},u - \bar{u}}\cdot \nabla\big) z_{\bar{u},u - \bar{u}}, w_{\bar{u}} \big)_Q -  2\big(\big(z_{{u}_{\theta},u - \bar{u}}\cdot \nabla\big) z_{{u}_{\theta},u - \bar{u}}, w_{{u}_{\theta}} \big)_Q.
\end{aligned}
\end{align*} 
Let us denote by $I_1$ and $I_2$ the integral corresponding to the square norms of the linear variables and trilinear form, respectively.

Estimate \eqref{estimate:linear-controlnstate1} aids us to get an upperbound for $I_1$
 \begin{align*}
\begin{aligned}
	|I_1| & = \left|\left( z_{u_\theta,u-\bar{u}} + z_{\bar{u},u-\bar{u}}, z_{u_\theta,u-\bar{u}} -  z_{\bar{u},u-\bar{u}} \right)_Q \right|\\
	& \le \left(\|z_{u_\theta,u-\bar{u}}\|_{L^2(Q)^2} + \|z_{\bar{u},u-\bar{u}} \|_{L^2(Q)^2} \right)
	\|z_{u_\theta,u-\bar{u}} -  z_{\bar{u},u-\bar{u}} \|_{L^2(Q)^2}\\
	& \le  c_1\left(\|z_{u_\theta,u-\bar{u}}\|_{L^2(Q)^2} + \|z_{\bar{u},u-\bar{u}} \|_{L^2(Q)^2} \right)\|y_{u_\theta} - y_{\bar{u}}\|_{C({\overline{Q}})^2}\| z_{\bar{u},u-\bar{u}}\|_{L^2(Q)^2}
\end{aligned}
\end{align*} 
Denoting by $\delta_1>0$ the constant mentioned in \Cref{lemma:lineargap-linear} we know that whenever $\|u_\theta-\bar{u} \|_{L^1(Q)^2}<\delta_1$ we get
 \begin{align*}
\begin{aligned}
	|I_1| & \le  c_1\|y_{u_\theta} - y_{\bar{u}}\|_{C({\overline{Q}})^2}\| z_{\bar{u},u-\bar{u}}\|_{L^2(Q)^2}^2.
\end{aligned}
\end{align*} 
Using for now the notations $z_{\bar{u}}:= z_{\bar{u},u-\bar{u}}$ and $z_{{u_\theta}}:=z_{{u_\theta},u-\bar{u}}$, estimates \eqref{estimate:adjdiff1} and \eqref{cc2:p_v} gives us an estimate for $I_2$,
 \begin{align*}
\begin{aligned}
	|I_2| =&\, \left| \big( (z_{\bar{u}}\cdot \nabla) z_{\bar{u}},w_{\bar{u}}\big)_Q - \big((z_{{u_\theta}}\cdot \nabla) z_{{u_\theta}}, w_{{u_\theta}} \big)_Q \right|\\
	\le&\, \left| \big(((z_{\bar{u}}-z_{{u_\theta}})\cdot \nabla) z_{\bar{u}}, w_{\bar{u}}\big)_Q \right| +  \left| \big((z_{{u_\theta}}\cdot \nabla)( z_{\bar{u}} - z_{{u_\theta}}), w_{\bar{u}}\big)_Q \right|+ \left| \big((z_{{u_\theta}}\cdot \nabla) z_{{u_\theta}}, w_{\bar{u}}- w_{{u_\theta}}\big)_Q \right|\\
	=&\, \left| \big(((z_{\bar{u}}-z_{{u_\theta}})\cdot \nabla) w_{\bar{u}}, z_{\bar{u}}\big)_Q \right| +  \left| \big((z_{{u_\theta}}\cdot \nabla)w_{\bar{u}}, z_{\bar{u}} - z_{{u_\theta}} \big)_Q \right| + \left| \big((z_{{u_\theta}}\cdot \nabla)(w_{\bar{u}}- w_{{u_\theta}}), z_{{u_\theta}} \big)_Q \right|\\
	\le &\, \|z_{\bar{u}} - z_{{u_\theta}}\|_{L^2(Q)^2}\|w_{\bar{u}}\|_{C(\overline I;C^1(\overline{\Omega})^2)}(\|z_{\bar{u}}\|_{L^2(Q)^2} + \|z_{{u_\theta}}\|_{L^2(Q)^2})\\
	& + \|z_{{u_\theta}}\|_{L^2(Q)^2}^2 \|w_{\bar{u}}- w_{{u_\theta}}\|_{C(\overline I;C^1(\overline{\Omega})^2)}
	\le  \left( 2c_1M_p +  \frac{9}{4}c\right)\|y_{\theta} - y_{\bar{u}}\|_{C({\overline{Q}})^2}\|z_{\bar{u},u-\bar{u}}\|_{L^2(Q)^2}^2.
\end{aligned}
\end{align*} 
We mention that to be able to reach the fifth relation we, once again, used the assumption that $\|u_\theta-\bar{u}\|_{L^1(\Omega)^2}<\delta_1$ where $\delta_1>0$ is as in Lemma \ref{lemma:lineargap-linear}.


Let $\tilde{s}\in [1,2)$ and $\tilde{s}' = \tilde{s}/(\tilde{s}-1)> 2$ be its H{\"o}lder conjugate, from \eqref{energy:stronglinearLP}, \eqref{state-to-controlgap} and \eqref{estimate:lsl1}, with the help of \eqref{uniformbound:control} we get
 \begin{align*}
&\vert [\mathcal J''(u_\theta)-\mathcal J''(\bar{u})](u-\bar{u})^2\vert  \le c\|y_\theta - y_{\bar{u}}\|_{C({\overline{Q}})^2}\|z_{\bar{u},u-\bar{u}}\|_{L^2(Q)^2}^2\\
& \le c\|u_\theta-\bar{u}\|_{L^{\tilde{s}'}(Q)^2}\|z_{\bar{u},u-\bar{u}}\|_{C(\overline{Q})^2}^{2-\tilde{s}} \|z_{\bar{u},u-\bar{u}}\|_{L^{\tilde{s}}(Q)^2}^{\tilde{s}} \le c\|u-\bar{u}\|_{L^{\tilde{s}'}(Q)^2}\|u-\bar{u}\|_{L^{\tilde{s}'}(Q)^2}^{2-\tilde{s}} \|u-\bar{u}\|_{L^{1}(Q)^2}^{\tilde{s}}\\
& \le cM_{\mathcal{U}}^{\frac{3-\tilde{s}}{\tilde{s}}}\|u-\bar{u}\|_{L^{1}(Q)^2}^{\frac{3-\tilde{s}}{\tilde{s}'} + \tilde{s}}.
\end{align*} 
By choosing $\tilde{s} > 3/2$, we can find $\ell_1,\ell_2>0 $ such that $2+\ell_1+\ell_2 = \frac{3-\tilde{s}}{\tilde{s}'} + \tilde{s}$. Thus, if we take $\mu = 1 + \ell_1$ and $\delta_2 = \left( \varepsilon/cM_{\mathcal{U}}^{\frac{3-\tilde{s}}{\tilde{s}}} \right)^{1/\ell_2}$ we get that whenever $\|u-\bar{u}\|_{L^{1}(Q)^2} < \delta:= \min\{\delta_1,\delta_2 \}$
 \begin{align*}
\vert [\mathcal J''(u_\theta)-\mathcal J''(\bar{u})](u-\bar{u})^2\vert & \le \varepsilon\|u-\bar{u}\|_{L^{1}(Q)^2}^{1+\mu}.
\end{align*} 
Furthermore, since  {$\tilde{s}\in (3/2,2)$} we have $\mu\in (1,2)$.  
\end{proof}


\section{Stability under perturbations}\label{sec:5}

This section deals with the main result of this paper. That is, we present the effects of perturbations on problem \eqref{opticon}.

\vspace{.1 in}\noindent{\bf  The perturbed problem.}
To begin, we consider the datum-perturbed Navier--Stokes equations 
 \begin{align}\label{dynsysper}
\left\{ \begin{aligned}
&\partial_t{y}- \nu\Delta {y}+ ({y}\cdot\nabla){y} + \nabla {p} = u\,\,\,\,\text{in Q},\\
&\dive {y}=0\,\,\,\,\text{in Q},\,\,  {y}=0\,\,\,\,\text{on $\Sigma$},\,\,\, {y}(\cdot,0)= y_0+\xi\,\,\,\text{in $\Omega$},
\end{aligned}
\right.
\end{align} 
where the perturbation $\xi\in W^{2-\frac{2}{\bar s}}_0(\Omega)^2$ accounts for possible uncertainty on the initial datum. From \Cref{existence:strongNS}, we know that the operator $\mathcal{S}_\xi: L^{\bar{s}}(Q) \to W_{\bar s}^{2,1}$ defined as $\mathcal{S}_\xi(u) = \widetilde{\mathcal{S}}(u,y_0+\xi)$ is well-defined, from which we know that the element $y_{u}^\xi :=\mathcal{S}_\xi(u) \in W_{\bar s}^{2,1}$ uniquely solves \eqref{dynsysper}.  Analogously with force-to-velocity map, $\mathcal{S}_\xi$ can be shown to be of class $C^\infty$, specifically, for given $u\in\mathcal{U}$ and $v\in L^{\bar{s}}(Q)^2$ with $u+v\in \mathcal{U}$ one gets an element $\mathcal{S}_\xi'(u)v\in W_{\bar s}^{2,1}$ that solves \eqref{strongwq:linear} with $\overline{y}_1=\overline{y}_2 = y_{u}^\xi $ and zero initial datum. Furthermore, we get the adjoint of $\mathcal{S}_\xi'(u) \in \mathcal{L}(L^{\bar{s}}(Q)^2;W^{2,1}_{\bar s,0})$, i.e., the element $\mathcal{S}_\xi'(u)^*v\in W^{2,1}_{\bar s} $ solves \eqref{weakop:adjoint} with $\overline{y}_1=\overline{y}_2 = y_u^\xi$ and ${v}$ as the right-hand side.

For $\eta\in L^{\bar s}(Q)$ and $\varepsilon\ge0$, we consider the perturbed objective function
 \begin{align*}
\mathcal J_{\xi,\eta}^\varepsilon(u):= \frac{1}{2}\int_{0}^T\int_\Omega |y_u^\xi(x,t) -  \big(y_{d}(x,t)+\eta(x,t)\big)|^2 \, dx
\, dt+\frac{\varepsilon}{2}\int_{0}^T\int_\Omega  |u|^2\, dx \, dt,
\end{align*} 
where $y_{u}^\xi =\mathcal{S}_\xi(u)$. 
The perturbation $\eta$ represent uncertainty in the tracking data and the parameter $\varepsilon$ is a weight parameter for the Tikhonov regularization term. 

The perturbed optimal control problem can now be written as 
 \begin{align}
\min_{u\in\mathcal{U}} \mathcal{J}_{\xi,\eta}^\varepsilon(u)\quad \text{ subject to }\eqref{dynsysper}.
\tag{P$_{\xi,\eta}^\varepsilon$}
\label{opticonper}
\end{align} 

 {Due to the convexity of the $L^2$-norm}, the functional $\mathcal J_{\xi,\eta}^\varepsilon$ is  {weakly} lower semicontinuous. Therefore, the existence  of at least one global minimizer of problem \eqref{opticonper} is guaranteed. Furthermore, one can obtain a first-order necessary condition for a minimizer $\widehat{u}\in\mathcal{U}$ of \eqref{opticonper} which we can write as $0\in \varepsilon\widehat{u} +w_{\widehat{u}}^\eta + N_{\mathcal{U}}(\widehat{u}) $ where $w_{\widehat{u}}^\eta := \mathcal{S}_\xi'(\widehat{u})^*(y_{\widehat{u}}^\xi - y_d - \eta)\in W_{\bar s}^{2,1}$, $y_{\widehat{u}}^\xi = \mathcal{S}_\xi(\widehat{u})$, and $N_{\mathcal{U}}(\widehat{u})$ is the normal cone to $\mathcal{U}$ at $\widehat{u}$, see $\eqref{normalcone}$. Let us introduce $\mathfrak{W}:= W^{2-\frac{2}{\bar s}}_0(\Omega)^2\times L^\infty(Q)^2 \times [0,+\infty)$ as the set of perturbations. We also say that an element $\widehat{u}\in \mathcal{U}$ is a solution to the first order necessary optimality condition of \eqref{opticonper} if it satisfies $0\in \varepsilon\widehat{u} +w_{\widehat{u}}^\eta + N_{\mathcal{U}}(\widehat{u}) $ where $w_{\widehat{u}}^\eta := \mathcal{S}_\xi'(\widehat{u})^*(y_{\widehat{u}}^\xi - y_d - \eta)\in W_{\bar s}^{2,1}$, $y_{\widehat{u}}^\xi = \mathcal{S}_\xi(\widehat{u})$. Note that the collection of such elements is non-empty, since any solution of \eqref{opticonper} is included in this collection.

Aside from the growth assumption imposed to obtain the second-order sufficient condition, we shall rely on a slightly modified growth assumption from which we can get the  {desired stability}. 
\begin{assumption}\label{growthasu}
Let $\mu\in[1,2)$ and $\bar u\in\mathcal U$.  Suppose that
 \begin{align*}
\mathcal J'(\bar u)(u-\bar u)+\mathcal J''(\bar u)(u-\bar u)^2\ge \|u-\bar u\|_{L^1(Q)^2}^{\mu+1}\quad \text{for all $u\in\mathcal U$ with $\|u-\bar u\|_{L^1(Q)^2}\le\delta$.}
\end{align*} 
\end{assumption}

 {Let us discuss the appearance of the first derivative in Assumption \ref{growthasu} and in \eqref{growthasunec}. For optimal control problems without constraints on the set of admissible controls, the first derivative vanishes for the optimal control and thus does not contribute to the growth. But in the case of control constraints, the first variation does not need to vanish and can even contribute to the growth. This holds especially when the optimal controls are of bang-bang structure. For instance, suppose the solution to the adjoint equation satisfies a structural condition, which is commonly assumed in the literature on bang-bang optimal control problems. Then, the first variation already fully contributes to the growth appearing in Assumption \ref{growthasu} and \eqref{growthasunec} and can even compensate for a negative second variation. }
We note that Assumption \ref{growthasu} is a weaker one compared to the growth assumption \eqref{growthasunec}, i.e., if $\bar u \in\mathcal{U}$ satisfies $0\in w_{\bar u} + N_{\mathcal{U}}(\bar{u})$ then Assumption \ref{growthasu} implies \eqref{growthasunec}. Both assumptions imply that $\bar{u}$ is bang-bang, and both will give us the stability we want. The advantage of assuming Assumption \ref{growthasu} is that we can directly apply the results in the Appendix while \eqref{growthasunec} requires a more nuanced proof to achieve stability. 

\begin{theorem}\label{Mainthm}
 {Let $\bar u$ be a local minimizer of problem \eqref{opticon}, $\mu\in[1,2)$ and  suppose that Assumption \ref{growthasu}
holds. Then there exist $\delta>0$ and $\kappa>0$ such that for all $\xi,\eta,\varepsilon\in\mathfrak{W}$ and all solutions $\widehat u\in\mathcal U$ of the first order necessary optimality condition of \eqref{opticonper} with $\|\widehat u-\bar u\|_{L^1(Q)^2}\le \delta$, it holds that
 \begin{align*}
\|\widehat u-\bar u\|_{L^1(Q)^2}\le \kappa\Big(\|\xi\|_{W^{2-\frac{2}{\bar s},s}_{0.\sigma}(\Omega)^2} +\|\eta\|_{L^2(Q)}+ \varepsilon\Big)^{\frac{1}{\mu}}.
\end{align*} }

\end{theorem}

 {It turns out that the weaker condition \eqref{growthasunec} is also sufficient for solution stability estimates. However, we need the controls corresponding to the perturbed problem to be global minimizers.}
\begin{theorem}\label{Mainthmw}
 {Let $\bar u$ be a local minimizer of problem \eqref{opticon}, $\mu\in[1,2)$ and suppose that \eqref{growthasunec}
holds. Then there exist $\delta>0$ and $\kappa>0$ such that for all $\xi,\eta,\varepsilon\in\mathfrak{W}$ and all global minimizers $\widehat u\in\mathcal U$ of problem \eqref{opticonper} with $\|\widehat u-\bar u\|_{L^1(Q)^2}\le \delta$, it holds that
 \begin{align*}
\|\widehat u-\bar u\|_{L^1(Q)^2}\le \kappa\Big(\|\xi\|_{W^{2-\frac{2}{\bar s},s}_{0.\sigma}(\Omega)^2} +\|\eta\|_{L^2(Q)}+ \varepsilon\Big)^{\frac{1}{\mu}}.
\end{align*} 
}
\end{theorem}

 {We postpone the proof of the theorems above as they require some estimates which are not yet available to our disposal. For now, we show that condition \eqref{growthasunec} is necessary to obtain the stability of the optimal controls under perturbations appearing in the normal cone. For this we apply the abstract result in the Appendix. To be able to apply Theorem \ref{Neccondcri}, we need to consider also linear control perturbations appearing in the objective functinonal. Thus let us define
 \begin{align*}
\mathcal J_{\xi,\eta}^{\varepsilon,\rho}(u):= \frac{1}{2}\int_{0}^T\int_\Omega |y_u^\xi(x,t) -  \big(y_{d}(x,t)+\eta(x,t)\big)|^2 \, dx
\, dt+\int_{0}^T\int_\Omega  u \rho\, dx \, dt+\frac{\varepsilon}{2}\int_{0}^T\int_\Omega  |u|^2\, dx \, dt,
\end{align*} 
and 
 \begin{align}
\min_{u\in\mathcal{U}} J_{\xi,\eta}^{\varepsilon,\rho}(u)\quad \text{ subject to }\eqref{dynsysper}.
\tag{P$_{\xi,\eta}^{\varepsilon,\rho}$}
\label{opticonperrho}
\end{align} }

\begin{theorem}
 {
Let $\bar u$ be a local minimizer of problem \eqref{opticon}, $\mu\in[1,2)$ and suppose that there exist $\delta>0$ and $\kappa>0$ such that for all $\xi,\eta,\varepsilon\in\mathfrak{W}$ and all solutions $\widehat u\in\mathcal U$ of the first order necessary optimality condition of \eqref{opticonper} with $\|\widehat u-\bar u\|_{L^1(Q)^2}\le \delta$, it holds that
 \begin{align*}
\|\widehat u-\bar u\|_{L^1(Q)^2}\le \kappa\Big(\|\xi\|_{W^{2-\frac{2}{\bar s},s}_{0.\sigma}(\Omega)^2} +\|\eta\|_{L^2(Q)}+ \varepsilon+\|\rho\|_{L^\infty(Q)}\Big)^{\frac{1}{\mu}}.
\end{align*} 
Then $\bar u$ satisfies \eqref{growthasunec} for appropriate constants $\delta$ and $c$.
}
\end{theorem}
\begin{proof}
 {The result is a direct consequence of Theorem \ref{Neccondcri}.}  
\end{proof}

To prove \Cref{Mainthm}, we will rely on the following lemmata that set up the application of the abstract results from the Appendix.
\begin{lemma}\label{lineargapper}
Let $u\in\mathcal U$ and $v\in L^{\bar s}(Q)^2$ such that $u+v\in \mathcal{U}$. Let $y_u^\xi = \mathcal{S}_\xi(u)$, $z_{u,v}^\xi := \mathcal{S}_\xi'(u)v\in W_{\bar s}^{2,1}$, $y_u = \mathcal{S}(u)$ and $z_{u,v} := \mathcal{S}'(u)v\in W_{\bar s}^{2,1}$. 
Then, there exists $c>0$ independent of $u$ such that
 \begin{align*}
\max\{\|z_{u,v}^\xi-z_{u,v}\|_{L^2(Q)^2} ,\|\nabla(z_{u,v}^\xi-z_{u,v})\|_{L^2(Q)^2}\}\le c\|y_u^\xi - y_u \|_{C(\overline{Q})^2}\|v\|_{L^2(Q)^2}.
\end{align*} 
\end{lemma}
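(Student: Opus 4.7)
The plan is to write down the Oseen-type equation satisfied by the difference $\delta z := z_{u,v}^\xi - z_{u,v}$ and then apply the standard energy estimate from Theorem \ref{theorem:weakexistencelinearNS}. First, $z_{u,v}^\xi$ solves \eqref{weakop:linear} with $\overline{y}_1 = \overline{y}_2 = y_u^\xi$, external force $v$, and zero initial datum, while $z_{u,v}$ solves the analogous system with $\overline{y}_1 = \overline{y}_2 = y_u$. Subtracting the two equations and adding and subtracting the cross terms $B(y_u, z_{u,v}^\xi)$ and $B(z_{u,v}^\xi, y_u)$, one sees that $\delta z$ satisfies
\begin{align*}
\partial_t \delta z + \nu A \delta z + \widetilde{B}(y_u, y_u)\delta z = -B(y_u^\xi - y_u, z_{u,v}^\xi) - B(z_{u,v}^\xi, y_u^\xi - y_u)\,\,\,\,\text{in }L^2(I;V^*),
\end{align*}
together with $\delta z(0) = 0$. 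Theorem \ref{theorem:weakexistencelinearNS} applied to this system yields
\begin{align*}
\|\delta z\|_{\widetilde{W}} \le c\,\bigl(\|B(y_u^\xi - y_u, z_{u,v}^\xi)\|_{L^2(I;V^*)} + \|B(z_{u,v}^\xi, y_u^\xi - y_u)\|_{L^2(I;V^*)}\bigr),
\end{align*}
so since the norm on $\widetilde W$ dominates both $\|\cdot\|_{L^2(I;V)}$ and $\|\cdot\|_{L^\infty(I;H)}$, it suffices to bound the two trilinear terms on the right-hand side.

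For the first term, a direct application of H\"older's inequality with $(L^\infty, L^2, L^2)$ together with Poincar\'e gives, for any $\varphi\in V$,
\begin{align*}
|b(y_u^\xi - y_u, z_{u,v}^\xi, \varphi)| \le \|y_u^\xi - y_u\|_{L^\infty(\Omega)^2}\|\nabla z_{u,v}^\xi\|_{L^2(\Omega)^{2\times 2}}\|\varphi\|_{L^2(\Omega)^2} \le c\|y_u^\xi - y_u\|_{L^\infty}\|z_{u,v}^\xi\|_V\|\varphi\|_V.
\end{align*}
For the second term, since $z_{u,v}^\xi \in V$ and hence is divergence-free, the antisymmetry property $b(\psi,\psi_1,\psi_2) = -b(\psi,\psi_2,\psi_1)$ (recalled after \eqref{ladyzhen}) allows us to shift the gradient from $y_u^\xi - y_u$ onto $\varphi$, yielding
\begin{align*}
|b(z_{u,v}^\xi, y_u^\xi - y_u, \varphi)| = |b(z_{u,v}^\xi, \varphi, y_u^\xi - y_u)| \le \|z_{u,v}^\xi\|_{L^2(\Omega)^2}\|\nabla\varphi\|_{L^2(\Omega)^{2\times 2}}\|y_u^\xi - y_u\|_{L^\infty(\Omega)^2}.
\end{align*}
This is the crucial step: without antisymmetry we would be forced to control $\nabla(y_u^\xi - y_u)$, which is not available in $L^\infty$.

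Integrating the two pointwise bounds in time and taking the supremum over $\varphi \in V$ with $\|\varphi\|_V \le 1$ gives
\begin{align*}
\|B(y_u^\xi - y_u, z_{u,v}^\xi)\|_{L^2(I;V^*)} + \|B(z_{u,v}^\xi, y_u^\xi - y_u)\|_{L^2(I;V^*)} \le c\|y_u^\xi - y_u\|_{C(\overline Q)^2}\|z_{u,v}^\xi\|_{L^2(I;V)}.
\end{align*}
The energy estimate \eqref{energy:weaklinearNS} applied to $z_{u,v}^\xi$ (with zero initial datum) gives $\|z_{u,v}^\xi\|_{L^2(I;V)} \le c\|v\|_{L^2(I;V^*)} \le c\|v\|_{L^2(Q)^2}$, where the last inequality follows from Poincar\'e and the embedding $L^2(\Omega)^2 \hookrightarrow V^*$. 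Combining these bounds with $\|\delta z\|_{\widetilde W} \ge \max\{\|\delta z\|_{L^2(Q)^2}, \|\nabla\delta z\|_{L^2(Q)^2}\}/c$ produces the claimed inequality. The main subtlety is the second trilinear term, which without the antisymmetry trick would fall outside the available regularity, but otherwise the argument is a standard energy estimate for the perturbed Oseen equation.
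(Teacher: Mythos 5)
Your proof is correct, but it takes a genuinely different route from the paper's. You derive the equation for $\delta z=z_{u,v}^\xi-z_{u,v}$ (linearized around $y_u$, with the commutator terms $-B(y_u^\xi-y_u,z_{u,v}^\xi)-B(z_{u,v}^\xi,y_u^\xi-y_u)$ on the right) and then run a direct energy estimate via Theorem \ref{theorem:weakexistencelinearNS}, using the antisymmetry $b(\psi,\psi_1,\psi_2)=-b(\psi,\psi_2,\psi_1)$ to avoid ever needing $\nabla(y_u^\xi-y_u)$ in $L^\infty$ --- that is indeed the step that makes the forward estimate close. The paper instead linearizes around $y_u^\xi$, writes the right-hand side as $B'(y_u-y_u^\xi)z_{u,v}$, and proves the $L^2(Q)$ bound by \emph{duality}: it pairs $\delta z$ against the solution $\mathfrak w$ of the adjoint Oseen system with arbitrary datum $f\in L^2(I;H)$, integrates by parts, and uses $\|\nabla\mathfrak w\|_{L^2(Q)^2}\le c\|f\|_{L^2(Q)^2}$; the gradient bound is then only asserted to follow "analogously" (following the scheme of \cite[Lemma 3.9]{CKbangbang}). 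Your approach has the advantage of delivering both the $L^2$ and the gradient estimates in one pass through $\|\delta z\|_{\widetilde W}$; the paper's transposition argument is the one that scales down to the $L^{\tilde s}$--$L^1$ estimates of Lemma \ref{lemma:lsl1}, which is presumably why the authors keep it. One point you should make explicit to match the statement "$c$ independent of $u$": the constant in the energy estimate \eqref{energy:weaklinearNS} depends on $\overline y_1=\overline y_2=y_u$ through $\|y_u\|_{L^2(I;V)\cap L^\infty(I;H)}$, and this is uniform over $u\in\mathcal U$ by \eqref{energy:weakNS} together with the uniform bound \eqref{uniformbound:control}; the same remark applies to the bound $\|z_{u,v}^\xi\|_{L^2(I;V)}\le c\|v\|_{L^2(Q)^2}$.
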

\begin{proof}
We only show the inequality $\|z_{u,v}^\xi-z_{u,v}\|_{L^2(Q)^2}\le c\|y_u^\xi - y_u \|_{C(\overline{Q})^2}\|v\|_{L^2(Q)^2}$, as the other one follows analogously as in the forecoming arguments and the proof of \cite[Lemma 3.9]{CKbangbang}.

 {First, we notice that $B'(y_u - y_u^\xi)z_{u,v} \in L^{\bar s}(Q)^2$, because for any $\varphi \in  L^{\bar{s}'}(Q)^2$ we have
 \begin{align*}
\begin{aligned}
&\left| \langle B'(y_u - y_u^\xi)z_{u,v},\varphi\rangle_{L^2(V^*),L^2(V))} \right| = \left|  (((y_u-y_{\bar{u}})\cdot\nabla) z_{u,v}, \varphi)_Q + ((z_{u,v}\cdot\nabla)(y_u-y_{\bar{u}}),\varphi )_Q \right|\\
&\le  \left(\|y_u^\xi - y_u \|_{C(\overline{Q})^2}\|z_{u,v}\|_{W^{2,1}_{\bar{s}}} + \|z_{u,v}\|_{C(\overline{Q})^2}\|y_u^\xi - y_u \|_{W^{2,1}_{\bar{s}}} \right)\|\varphi\|_{L^{\bar{s}'}(Q)^2},
\end{aligned}
\end{align*}	 
where we can immediately see that all of the terms on the last line are bounded. Hence, we get that} the element $\mathfrak{Z} = z_{u,v}^\xi-z_{u,v} \in W^{2,1}_{\bar s}$ solves the linear system
 \begin{align*}
\begin{aligned}
\partial_t \mathfrak{Z} + \nu A\mathfrak{Z} + B'(y_u^\xi)\mathfrak{Z} + \nabla \mathfrak q&= B'(y_u - y_u^\xi)z_{u,v} &&\text{in }L^{\bar s}(Q)^2\\
\mathfrak{Z}(0)& = 0&&\text{in } W_0^{2-2/\bar s,\bar s}(\Omega)^2.
\end{aligned}
\end{align*} 
Now, let $f\in L^2(I;H)$ and consider the adjoint system
 \begin{align*}
\begin{aligned}
-\partial_t \mathfrak{w} + \nu A\mathfrak{w} + B'(y_u^\xi)^*\mathfrak{w} + \nabla \mathfrak{r}&= f &&\text{in }L^2(I;H)\\
\mathfrak{w}(T)& = 0&&\text{in } H.
\end{aligned}
\end{align*} 
From \Cref{theorem:weakadjoint}, we know that $\|\nabla\mathfrak w\|_{L^2(Q)^2} \le c\|f\|_{L^2(Q)^2}$ for some constant $c>0$.
Hence, we get that
 \begin{align*}
\begin{aligned}
&\int_Q \mathfrak{Z}\cdot f\du x\du t = \big(\mathfrak{Z}, -\partial_t \mathfrak{w} + \nu A\mathfrak{w} + B'(y_u^\xi)^*\mathfrak{w} + \nabla \mathfrak{r}\big)_Q \\
& = \big(\partial_t \mathfrak{Z} + \nu A\mathfrak{Z} + B'(y_u^\xi)\mathfrak{Z} + \nabla \mathfrak q,  \mathfrak w\big)_Q = \big\langle B'(y_u - y_u^\xi)z_{u,v}, \mathfrak w\rangle_{L^2(V^*),L^2(V)}\\
& \le c\|y_u - y_u^\xi\|_{C(\overline{Q})^2}\|z_{u,v}\|_{L^2(Q)^2}\|\nabla\mathfrak w\|_{L^2(Q)^2} \le c\|y_u - y_u^\xi\|_{C(\overline{Q})^2}\|z_{u,v}\|_{L^2(Q)^2}\|f\|_{L^2(Q)^2}.
\end{aligned}
\end{align*} 
The fact that $\|z_{u,v}\|_{L^2(Q)^2} \le c\|v \|_{L^2(Q)^2}$ proves the claim.

\end{proof}

\begin{lemma}\label{adjointgapper}
Let $u\in\mathcal U$. Let $y_u^\xi = \mathcal{S}_\xi(u)$, $w_u^\eta := \mathcal{S}_\xi'(u)^*(y_u^\xi - y_d - \eta)\in W_{\bar s}^{2,1}$.
Then, there exists $c>0$ (independent of $u$) such that
 \begin{align}\label{est:adjointgapper}
 {\|w_u^\eta-w_u\|_{W_{\bar s}^{2,1}}\le c\Big(\|\xi\|_{W^{2-\frac{2}{\bar s},\bar s}_{0.\sigma}(\Omega)^2}+\|\eta\|_{L^{\bar s}(Q)}\Big)}
\end{align} 
for all $\xi\in W^{2-\frac{2}{\bar s},\bar s}_{0.\sigma}(\Omega)^2$ and $\eta\in L^{\bar s}(Q)$, where $w_u = \mathcal{S}'(u)^*(y_u - y_d)$ and $y_u = {\mathcal{S}}(u)$.
\end{lemma}
\begin{proof}
We begin by obtaining some estimates for the element $z_u^\xi := y_u^\xi - y_u \in W^{2,1}_{\bar{s}} $ which solves the system
 \begin{align*}
\left\{
\begin{aligned}
\partial_tz_u^\xi + \nu Az_u^\xi + \widetilde{B}(y_u,y_u^\xi)z_u^\xi + \nabla q &= 0 &&\text{in }L^s(Q)^2,\\
z_u^\xi(0) & = \xi &&\text{in }W_{0,\sigma}^{2-2/\bar{s},\bar{s}}(\Omega)^2.
\end{aligned}\right.
\end{align*} 
According to \Cref{theorem:stronglinear}, this solution satisfies $\| z_u^\xi\|_{W^{2,1}_{\bar{s}}} \le c \|\xi\|_{W_{0,\sigma}^{2-2/\bar{s},\bar{s}}(\Omega)^2}$.

Now the difference $\mathfrak{w}_u^\eta:= w_u^\eta-w_u$ solves the system
 \begin{align*}
\begin{aligned}
-\partial_t \mathfrak{w}_u^\eta + \nu A \mathfrak{w}_u^\eta + B'(y_u^\xi)^*\mathfrak{w}_u^\eta + \nabla \mathfrak{r} & = B'(z_u^\xi)^*w_u + z_u^\xi + \eta &&\text{in }L^{\bar s}(Q)^2\\
\mathfrak{w}_u^\eta(T)& = 0 &&\text{in }W_0^{2-2/{\bar s},\bar s}(\Omega)^2.
\end{aligned}\label{system:adjointdiffpert}
\end{align*} 
Using the same arguments as in \Cref{adjointgap}, we get \eqref{est:adjointgapper}.  
\end{proof}

\begin{lemma}\label{Esslem}
Let $\bar u\in\mathcal U$ and $\mu\in[1,2)$ satisfy  {Assumption \ref{growthasu}}. There exist positive numbers $\delta$ and $c$ such that 
 \begin{align*}
\|u-\bar u\|_{L^1(Q)^2}\le c\|\rho\|_{L^\infty(Q)^2}^{\frac{1}{{\mu}}}
\end{align*} 
for all  $\rho\in L^\infty(Q)$  and $u\in\mathcal U$ satisfying $\rho\in w_u+ N_{\mathcal U}(u)$ and $\|u-\bar u\|_{L^1(Q)^2}\le\delta$.
\end{lemma}
\begin{proof}
By Proposition \ref{equivagro}, there exists positive constants $c$ and $\alpha$ such that $\mathcal{J}'(u)(u-\bar u)\ge c\|u-\bar u\|_{L^1(Q)^2}^{\mu+1}$ for all $u\in \mathcal U$ with $\|u-\bar u\|_{L^1(Q)^2}\le\alpha$. Given  $\rho\in L^\infty(Q)^2$  and $u\in\mathcal U$ satisfying $\rho\in w_u+ N_{\mathcal U}(u)$ and $\|u-\bar u\|_{L^1(Q)^2}\le\alpha$, it holds 
\begin{equation}\label{helpeq}
( w_u-\rho,\bar u-u)_Q \geq 0.
\end{equation}
Now rearranging the terms in \eqref{helpeq} and applying Proposition \ref{equivagro}, we find
 \begin{align*}
c\|u-\bar u\|_{L^1(Q)^2}^{\mu+1}\leq \mathcal{J}'(u)(u-\bar u)= ( w_u,u-\bar u)_Q \leq \|\rho\|_{L^\infty(Q)^2} \|\bar u-u\|_{L^1(Q)^2},
\end{align*} 
which proves the claim.  
\end{proof}

\begin{proof}[Proof of {\Cref{Mainthm}}]
Let us begin by defining $\rho:=w_{\widehat u} - \varepsilon \widehat u-w_{\widehat u}^\eta \in L^\infty(Q)^2$, where $w_{\widehat u} = \mathcal{S}'(\widehat{u})^*(y_{\widehat{u}}-y_d)$ and $w_{\widehat u}^\eta = \mathcal{S}_{\xi}'(\widehat{u})^*(y_{\widehat{u}}-y_d-\eta)$, and utilizing \Cref{adjointgapper} we get
 \begin{align*}
\|\rho\|_{L^\infty(Q)}\le \| w_{\widehat u}-w_{\widehat u}^\eta\|_{L^\infty(Q)^2} + \varepsilon\| \widehat u\|_{L^\infty(Q)^2}\le  c\Big(\|\xi\|_{W^{2-\frac{2}{\bar s},s}_{0.\sigma}(\Omega)^2}+\|\eta\|_{L^2(Q)}+\varepsilon\Big),
\end{align*} 
where $c>0$ is dependent on $M_{\mathcal{U}}$ and the constant in \Cref{adjointgapper}.
As $\widehat u\in\mathcal{U}$ is a local minimizer of problem \eqref{opticonper}, we get $0\in \varepsilon \widehat u+ w_{\widehat u}^\eta+ N_{\mathcal U}(\widehat u)$ and thus $\rho\in w_{\widehat{u}}+ N_{\mathcal U}(\widehat u)$. 
Then by \Cref{Esslem},  
 \begin{align*}
\begin{aligned}
\|\widehat u-\bar u\|_{L^1(Q)^2}\le c\|\rho\|_{L^\infty(Q)}^{\frac{1}{{\mu}}} &\le 
cc_2^{\frac{1}{{\mu}}}\Big(\|\xi\|_{W^{2-\frac{2}{\bar s},s}_{0.\sigma}(\Omega)^2}+\|\eta\|_{L^2(Q)}+\varepsilon\Big)^{\frac{1}{{\mu}}}\\
&:=\kappa\Big(\|\xi\|_{W^{2-\frac{2}{\bar s},s}_{0.\sigma}(\Omega)^2}+\|\eta\|_{L^2(Q)}+\varepsilon\Big)^{\frac{1}{\mu}}.
\end{aligned}
\end{align*} 
\end{proof}

\begin{proof}[Proof of {\Cref{Mainthmw}}]
We first note that since $\widehat u$ is a minimizer of Problem \eqref{opticonper} we have $J_{\xi,\eta}^\varepsilon(\widehat u) -\mathcal J_{\xi,\eta}^\varepsilon(\bar u)\leq 0$ which is equivalent to writing 
 \begin{align}\label{difference}
\begin{aligned}
0 & \le - ( \mathcal G(\widehat u)-\mathcal G(\bar u)) + (\eta, y_{\widehat u}^\xi- y_{\bar u}^\xi)_Q +\frac{\varepsilon}{2}\int_Q  |\bar u|^2-\vert \widehat u\vert ^2\, dx \, dt\\
& \le  - ( \mathcal G(\widehat u)-\mathcal G(\bar u)) + \|\eta\|_{L^2(Q)^2}\|y_{\widehat u}^\xi- y_{\bar u}^\xi\|_{L^2(Q)^2} + M_\mathcal{U}\varepsilon\|\bar u - \widehat{u}\|_{L^1(Q)^2}\\
& \le  - ( \mathcal G(\widehat u)-\mathcal G(\bar u)) + c(\|\eta\|_{L^{\bar s}(Q)^2} + \varepsilon)\|\bar u - \widehat{u}\|_{L^1(Q)^2}
\end{aligned}
\end{align} 
where for a fixed $\xi\in L^\infty(Q)^2$ we used the notation $\mathcal G(u):=\frac{1}{2}\int_Q |y_{u}^\xi(x,t) - y_{d}(x,t)|^2\, dx\,dt.$ We note that to achieve the last line we used the fact that $y_{\widehat u}^\xi- y_{\bar u}^\xi \in W^{2,1}_s$ solves an equation of the form \eqref{strongwq:linear} with the right-hand side $\widehat u - \bar u$ and initial datum equal to zero, which implies $\|y_{\widehat u}^\xi- y_{\bar u}^\xi\|_{L^2(Q)^2} \le c \|\widehat u - \bar u\|_{L^1(Q)^2}$ for some constant $c>0$.

By the Taylor theorem, for some $\theta\in[0,1]$ 
we find $u_{\theta} = \bar{u} + \theta(\widehat u-\bar u) \in \mathcal{U}$ such that 
\begin{equation*}
\mathcal G(\widehat u)-\mathcal G(\bar u):=\mathcal G'(\bar u)(\widehat u-\bar u)+\frac{1}{2}\mathcal G''( u_\theta)(\widehat u-\bar u)^2.
\end{equation*}
From this we rewrite \eqref{difference} as
 \begin{align*}
\mathcal J'(\bar u)(\widehat u-\bar u)+\frac{1}{2}\mathcal J''(u_\theta)(\widehat u-\bar u)^2 \le G_1 + \frac{1}{2}G_2 + c(\|\eta\|_{L^{\bar s}(Q)^2} + \varepsilon)\|\bar u - \widehat{u}\|_{L^1(Q)^2}
\end{align*} 
where $G_1 = \mathcal J'(\bar u)(\widehat u-\bar u) - \mathcal G'(\bar u)(\widehat u-\bar u)$ and $G_2 = \mathcal J''(u_\theta)(\widehat u-\bar u)^2 - \mathcal G''(u_\theta)(\widehat u-\bar u)^2$. Let us now get some estimates for $G_1$ and $G_2$, respectively.

For $G_1$, we note that we can write the derivative of $\mathcal{G}$ as $\mathcal{G}'(\bar{u})(\widehat u - \bar{u}) = (w_{\bar u}^{\eta}, \widehat{u}-\bar{u})_Q$ where $w_{\bar u}^{\eta} = \mathcal{S}_\xi'(\bar u)^*(y_{\bar u}^{\xi} - y_d - \eta)$ so that, by additionally employing \eqref{est:adjointgapper} and the embedding $W^{2,1}_{\bar s} \hookrightarrow C(\overline{Q})^2$, we get
 \begin{align}\label{g1}
|G_1| & \le \|w_{\bar u}^{\eta} - w_{\bar u} \|_{C(\overline{Q})^2}\|\widehat{u} - \bar{u} \|_{L^1(Q)^2} \le c\Big(\|\xi\|_{W^{2-\frac{2}{\bar s},\bar s}_{0,\sigma}(\Omega)^2}+\|\eta\|_{L^2(Q)}\Big)\|\widehat{u} - \bar{u} \|_{L^1(Q)^2}.
\end{align} 

The estimate for $G_2$ will be divided into two parts, i.e., using the same form as the second Fr{\'e}chet derivative of $\mathcal{J}$ from \Cref{derJ} we write $G_2 = G_{2,1} + G_{2,2}$ where
 \begin{align*}
\begin{aligned}
&G_{2,1} = \int_Q |z_{u_\theta,\widehat{u}-\bar{u}}|^2-|z_{u_\theta,\widehat{u}-\bar{u}}^\xi|^2 \du x\du t\\
&G_{2,2} = 2\int_Q [(z_{u_\theta,\widehat{u}-\bar{u}} \cdot \nabla)z_{u_\theta,\widehat{u}-\bar{u}}]\cdot w_{u_{\theta}} - [(z_{u_\theta,\widehat{u}-\bar{u}}^\xi \cdot \nabla)z_{u_\theta,\widehat{u}-\bar{u}}^\xi]\cdot w_{u_{\theta}}^\eta \du x\du t
\end{aligned}
\end{align*} 
Knowing that $\max\{ \|z_{u_\theta,\widehat{u}-\bar{u}}\|_{L^2(Q)^2},\|z_{u_\theta,\widehat{u}-\bar{u}}^\xi\|_{L^2(Q)^2} \} \le c\|\widehat{u}-\bar{u} \|_{L^2(Q)^2}$, and due to \Cref{lineargapper} we majorize $G_{2,1}$ as follows:
 \begin{align*}
\begin{aligned}
|G_{2,1}| &\le \|z_{u_\theta,\widehat{u}-\bar{u}}^\xi + z_{u_\theta,\widehat{u}-\bar{u}}\|_{L^2(Q)^2} \|z_{u_\theta,\widehat{u}-\bar{u}}^\xi - z_{u_\theta,\widehat{u}-\bar{u}}\|_{L^2(Q)^2}  \le c\|\widehat{u}-\bar{u} \|_{L^1(Q)^2}\|y^\xi_{u_\theta} - y_{u_\theta} \|_{C(\overline{Q})^2}.
\end{aligned}
\end{align*} 
Similarly, using additionally \Cref{adjointgapper}, we majorize the remaining term as
 \begin{align}\label{g21}
\begin{aligned}
|G_{2,2}| \le&\, \left| \big(((z_{u_\theta,\widehat{u}-\bar{u}}^\xi-z_{u_\theta,\widehat{u}-\bar{u}}) \cdot \nabla)z_{u_\theta,\widehat{u}-\bar{u}} , w_{u_{\theta}} \big)_Q \right| + \left|  \big((z_{u_\theta,\widehat{u}-\bar{u}}^\xi \cdot \nabla)(z_{u_\theta,\widehat{u}-\bar{u}}^\xi-z_{u_\theta,\widehat{u}-\bar{u}}), w_{u_{\theta}}\big)_Q \right|\\
& + \left| \big((z_{u_\theta,\widehat{u}-\bar{u}}^\xi \cdot \nabla)z_{u_\theta,\widehat{u}-\bar{u}}^\xi, w_{u_{\theta}}^\eta - w_{u_{\theta}}\big)_Q\right|\\
\le &\, \|z_{u_\theta,\widehat{u}-\bar{u}}^\xi-z_{u_\theta,\widehat{u}-\bar{u}} \|_{L^2(Q)^2}\|\nabla z_{u_\theta,\widehat{u}-\bar{u}}\|_{L^2(Q)^2}\|w_{u_\theta}\|_{C(\overline{Q})^2}\\
& + \|z_{u_\theta,\widehat{u}-\bar{u}}^\xi\|_{L^2(Q)^2}\|\nabla(z_{u_\theta,\widehat{u}-\bar{u}}^\xi-z_{u_\theta,\widehat{u}-\bar{u}})\|_{L^2(Q)^2}\|w_{u_\theta}\|_{C(\overline{Q})^2}\\
& + \|z_{u_\theta,\widehat{u}-\bar{u}}^\xi\|_{L^2(Q)^2}\|\nabla(z_{u_\theta,\widehat{u}-\bar{u}}^\xi-z_{u_\theta,\widehat{u}-\bar{u}})\|_{L^2(Q)^2} \|w_{u_\theta}^\eta-w_{u_\theta}\|_{C(\overline{Q})^2}\\
\le &\, c\|\widehat{u}-\bar{u} \|_{L^1(Q)^2}(\|y^\xi_{u_\theta} - y_{u_\theta} \|_{C(\overline{Q})^2} + \|\xi\|_{W^{2-\frac{2}{\bar s},\bar s}_{0,\sigma}(\Omega)^2}+\|\eta\|_{L^{\bar s}(Q)} )
\end{aligned}
\end{align} 
Following the proof in \Cref{adjointgapper}, we know that $\|y^\xi_{u_\theta} - y_{u_\theta} \|_{C(\overline{Q})^2} \le c\|\xi\|_{W^{2-\frac{2}{\bar s},s}_{0,\sigma}}$ for some constant $c>0$. 
With this relation, and the estimates \eqref{g1} and \eqref{g21} we get
 \begin{align}\label{amost}
\mathcal J'(\bar u)(\widehat u-\bar u)+\frac{1}{2}\mathcal J''(u_\theta)(\widehat u-\bar u)^2 \le  c(\|\xi\|_{W^{2-\frac{2}{\bar s},s}_{0,\sigma}(\Omega)^2} + \|\eta\|_{L^{\bar s}(Q)^2} + \varepsilon)\|\bar u - \widehat{u}\|_{L^1(Q)^2}.
\end{align} 

From \Cref{curveest}, for any $\varepsilon>0$ we can find a $\delta_1>0$ such that 
 \begin{align*}
\vert \mathcal J''(\bar u_\theta)(u-\bar u)^2-\mathcal J''(\bar u)(u-\bar u)^2\vert \le \rho \|u-\bar u\|_{L^{1}(Q)^2}^{\mu+1}
\end{align*} 
whenever $u\in\mathcal{U}$ satisfies $\|u-\bar u\|_{L^1(Q)^2}\le \delta_1$.
Combining this with \eqref{growthasunec} we estimate the left-hand side of the inequality above as
 \begin{align}\label{done}
\begin{aligned}
&\mathcal J'(\bar u)(\widehat u-\bar u)+\frac{1}{2}\mathcal J''(\bar u_\theta)(\widehat u-\bar u)^2\\
& = \mathcal J'(\bar u)(\widehat u-\bar u)+\frac{1}{2}\mathcal J''(\bar{u})(\widehat u-\bar u)^2 +\frac{1}{2}\left[ \mathcal J''(u_\theta)(\widehat u-\bar u)^2 - \mathcal J''(\bar{u})(\widehat u-\bar u)^2 \right]\\
&\ge \left(1-\frac{\varepsilon}{2}\right)\|u-\bar u\|_{L^1(Q)^2}^{\mu+1}
\end{aligned}
\end{align} 
whenever $\|u-\bar u\|_{L^1(Q)^2}\le \delta:=\min\{\delta_1,\delta_2 \}$, where $\delta_2>0$ is as in the assumption \eqref{growthasunec}. The arbitrariness of $\varepsilon>0$ allows us to choose $\varepsilon<2$. Combining \eqref{amost} and \eqref{done} proves our claim.  
\end{proof}



To finally end this section, we mention that according to \Cref{equivagro}, local minimizers $\bar u\in \mathcal{U}$ that satify \eqref{suffop} should also satisfy \eqref{growthasunec}. This implies that said local minimizers are bang-bang and should satisfy the stability we just proved. 

\section{Conclusion}

In this paper, we studied the well-known velocity tracking problem for the Navier--Stokes equations under the context of optimal control. As the objective functional intended for tracking the velocity contains no regularization for the control, the solution, which is assumed to satisfy a box constraint, can be expected to be of bang-bang type. The study of bang-bang optimal solution is the focus of this paper. Before delving into the main results, we analyzed the necessary and sufficient conditions of the optimization problem. The second-order sufficient condition was established under growth conditions, which as far as we are aware have never been used in the context of the Navier--Stokes equations which implies the bang-bang structure of the optimal controls.

The main results of this article are the sufficient and necessary conditions for the stability of optimal controls under several perturbations. In particular, the original optimization problem is perturbed in terms of the desired velocity, the initial data, and the objective functional itself via the Tikhonov regularization. The tools utilized for the study of the solution stability are owed to the so-called strong H{\"o}lder subregularity, which we discussed in the appendix. To be able to apply such tools for the Navier--Stokes equations, we used and, in some cases, improved existing stability estimates concerning the solutions of the nonlinear, the linearized, and the adjoint equations. A noteworthy addition is the $L^s-L^1$ stability for the Oseen equations and its adjoint enabled us to prove that the velocity tracking functional has a changing curvature of order $\mu \in [1,2)$. This result then allowed us to prove the desired solution stability of the optimal controls.

\appendix

\section{Appendix}
We now collect the stability results of the paper in an abstract framework, as the same principles can be applied to other types of optimization problems. We employ normed spaces since they constitute an adequate setting for our purposes; norms provide positively homogeneous measures for notions of growth and convergence, unlike general abstract metric spaces.

The results of this section focus mainly on necessary and sufficient conditions for stability of the first-order necessary conditions in optimization. For  convenience of the reader, this section is intended to be absolutely self-contained and independent of other sections.
{
Throughout the Appendix, unless otherwise stated,  $\big(U,\|\cdot\|_{U}\big)$ is a normed space and $\mathcal U$ a convex subset of $U$. We also consider a real-valued functional $\mathcal J:\mathcal U\to\mathbb R$. We will focus on stability properties associated to the minimization problem
\begin{align}\label{minprobabs}
\min_{u\in\mathcal U}\mathcal J(u).
\end{align}
In the context of optimal control, $\mathcal U$ is to be interpreted as the set of controls, and $\mathcal J:\mathcal U\to\mathbb R$ as the objective functional. We will see that the stability of the system of necessary conditions for problem (\ref{minprobabs}) is closely related to the growth conditions satisfied by $\mathcal J$ at a local minimizer.}
\subsection{A first-order variant of the Ekeland principle}

The first subsection is of technical nature and is devoted to recalling a few results of variational analysis that will be used later on. In particular, we state a first-order variant of the seminal Ekeland variational principle.

We begin  recalling the standard notion of (first-order) Gateaux differentiability.  We say that  $\mathcal J$ is Gateaux differentiable at $\bar u\in\mathcal U$ if there exists a linear mapping $\mathcal J'(u)\in U^*$ such that 
 \begin{align*}
{\mathcal J}'(\bar u)v=\lim_{\varepsilon\to0^+}\frac{{\mathcal J}(\bar u+\varepsilon v)-{\mathcal J}(\bar u)}{\varepsilon}\quad \forall v\in \mathcal U-u.
\end{align*}


Working with functions defined on convex domains has the advantage of simple tangent and normal cone formulations, which in turn implies that the first-order necessary condition also take a simpler form.

The  normal cone to $\mathcal U$ at $\bar u$ is defined by
 \begin{align}\label{normalcone}
N_{\mathcal U}(\bar u):=\left\lbrace \rho\in U^*:\text{ }\rho(u-\bar u)\le0\quad\text{for all $u\in\mathcal U$}\right\rbrace.
\end{align}

The first-order necessary condition is well-known for Gateaux differentiable functions,  {and a lot of the work carried out in optimization and variational analysis relies on it}; see \cite[pp. 11-13]{LecVar}.  {If  ${\mathcal J}$ is G{a}teaux differentiable at a local minimizer $\bar u\in\mathcal U$, then $0\in \mathcal J'(\bar u)+N_{\mathcal U}(\bar u)$}.
Second-order necessary conditions for optimality are also well known; see, e.g., \cite[Lemma 3.44]{PerAna} or \cite[Theorem 3.45]{Andrz}.

We give now a technical lemma based on 
the celebrated Sion Minimax Theorem.
\begin{lemma}\label{normlem}
Let $\psi:\mathcal{U}\to\mathbb R$ be a convex lower semicontinuous function. Let $\hat u\in U$ and $\gamma>0$.  There exists $\hat\rho\in U^*$ with  $\|\hat \rho\|_{U^*}\le\gamma$ such that   
 \begin{align*}
\inf_{u\in M} \Big\{\psi(u) + \gamma\|u-\hat u\|_U\Big\} =	\inf_{u\in \mathcal{U}}\left\lbrace \psi(u)-\hat\rho (u-\hat u)\right\rbrace .
\end{align*} 
\end{lemma}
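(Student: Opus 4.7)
The plan is to apply Sion's minimax theorem to the affine-in-$\rho$, convex-in-$u$ functional $F(u,\rho) := \psi(u) - \rho(u - \hat u)$ defined on $M \times B_\gamma$, where $B_\gamma := \{\rho \in U^* : \|\rho\|_{U^*} \le \gamma\}$. The key preliminary observation is the dual characterization of the norm: for every $u \in U$,
$$\gamma\|u - \hat u\|_U = \sup_{\rho \in B_\gamma}\bigl[-\rho(u - \hat u)\bigr].$$
Consequently $\sup_{\rho \in B_\gamma} F(u,\rho) = \psi(u) + \gamma\|u-\hat u\|_U$, and the left-hand side of the claimed identity rewrites as the minimax $\inf_{u \in M}\sup_{\rho \in B_\gamma} F(u,\rho)$.

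Next I would verify the hypotheses of Sion. The map $F(\cdot,\rho)$ is convex and lower semicontinuous on $\mathcal{U}$ as the sum of a convex lsc function and a continuous linear functional; the map $F(u,\cdot)$ is affine and weak-$*$ continuous on $B_\gamma$. The ball $B_\gamma$ is weak-$*$ compact by Banach--Alaoglu, which supplies the compactness side required by Sion's theorem, and $M$ plays the role of the convex domain on the other side. Sion's theorem then yields the interchange $\inf_{u \in M}\sup_{\rho \in B_\gamma} F = \sup_{\rho \in B_\gamma}\inf_{u \in M} F$. To extract a concrete $\hat\rho$, I would note that $\rho \mapsto \inf_{u \in M} F(u,\rho)$, being the infimum of weak-$*$ continuous affine functions, is itself weak-$*$ upper semicontinuous; since $B_\gamma$ is weak-$*$ compact, this function attains its supremum at some $\hat\rho$ with $\|\hat\rho\|_{U^*} \le \gamma$.

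The final and most delicate step is to match the right-hand infimum as stated in the lemma, which is taken over the (possibly larger) set $\mathcal{U}$ rather than $M$. For this I would use that $\hat\rho$ is a maximizer of the concave function $\rho \mapsto \inf_{u \in M} F(u,\rho)$ on $B_\gamma$, so the perturbed functional $u \mapsto \psi(u) - \hat\rho(u - \hat u)$ is majorized by $\psi(u) + \gamma\|u-\hat u\|_U$ on all of $\mathcal{U}$ (by the dual characterization of the norm), and realises equality on $M$. Combined with Sion's equality of the saddle value, this forces $\inf_{u \in \mathcal{U}}\{\psi(u) - \hat\rho(u-\hat u)\} = \inf_{u \in M}\{\psi(u) + \gamma\|u-\hat u\|_U\}$: the reverse inequality is immediate from the majorization, and the forward inequality follows because the saddle value already equals this common number. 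The main obstacle is precisely this extension step: showing that the $\mathcal{U}$-infimum of the relaxed functional does not drop below the $M$-infimum of the penalized functional. It is handled entirely by the optimality of $\hat\rho$ and the pointwise sandwich $-\hat\rho(u-\hat u) \le \gamma\|u-\hat u\|_U$ guaranteed by $\|\hat\rho\|_{U^*} \le \gamma$.
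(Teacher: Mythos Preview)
Your first two paragraphs reproduce the paper's proof exactly: Sion's minimax theorem applied to $(u,\rho)\mapsto\psi(u)-\rho(u-\hat u)$ (the paper writes this as $\psi(u)+\gamma\rho(u-\hat u)$ over the unit ball and sets $\hat\rho=-\gamma\rho^*$ at the end, which is the same thing), together with Banach--Alaoglu for weak-$*$ compactness of $B_\gamma$ and weak-$*$ upper semicontinuity of $\rho\mapsto\inf_u F(u,\rho)$ to extract the maximizer $\hat\rho$.

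Your third paragraph, however, is solving a problem that is not there. The symbol $M$ in the displayed statement is a typo for $\mathcal U$; the paper's own proof takes both infima over $\mathcal U$, and no set $M$ is ever introduced in this appendix. Once $M=\mathcal U$, your ``delicate extension step'' evaporates and the argument is complete after the second paragraph. It is worth noting that your attempted extension would actually fail if $M$ were a genuine proper subset of $\mathcal U$: the ``forward inequality'' $\inf_{\mathcal U}\{\psi-\hat\rho(\cdot-\hat u)\}\ge\inf_{M}\{\psi+\gamma\|\cdot-\hat u\|\}$ amounts, via the saddle identity, to $\inf_{\mathcal U}\{\psi-\hat\rho(\cdot-\hat u)\}\ge\inf_{M}\{\psi-\hat\rho(\cdot-\hat u)\}$, which goes the wrong way when $M\subsetneq\mathcal U$. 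Neither the optimality of $\hat\rho$ on $B_\gamma$ nor the pointwise bound $-\hat\rho(u-\hat u)\le\gamma\|u-\hat u\|$ rescues this. Fortunately, the issue is moot.
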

\begin{proof}
Let $\mathbb B^*$ be the unit ball of $U^*$. Define $f:\mathcal{U}\times \mathbb{B}^*\to\mathbb R$ by $f(u,\rho):=\psi(u)+\gamma\rho(u-\hat u)$. Note that $U^*$ endowed with the weak* topology is a linear topological space, and $\mathbb B^*$ is weak* compact by Banach-Alaoglu Theorem. The function  $f(\cdot,\rho):\mathcal{U}\to\mathbb R$ is convex and lower semicontinuous for each $\rho\in U^*$. The function $f(u,\cdot): \mathbb{B}^*\to\mathbb R$ is weak* continuous and affine for each $u\in \mathcal{U}$. The hypotheses of Sion Minimax Theorem (\cite[Corollary 3.3]{Sion}) are then satisfied, and hence 
 \begin{align}\label{InfSup}
\inf_{u\in \mathcal{U}}\sup_{\rho\in \mathbb B^*}\left\lbrace  \psi(u)+\gamma\rho(u-\hat u)\right\rbrace =\sup_{\rho\in \mathbb B^*}\inf_{u\in \mathcal{U}} \{\psi(u)+\gamma\rho(u-\hat u)\}.
\end{align} 
Let $h:\mathbb B^*\to \mathbb R$ be given by $h(\rho):=\inf_{u\in \mathcal{U}}\{\psi(u)+\gamma\rho(u-\hat u)\}$. Clearly, $h$ is weak* upper semicontinuous as it is the infimum of weak* continuous functions; and since $\mathbb B^*$ is  weak* compact, there exists $\rho^*\in\mathbb B^*$ such that $\sup_{\rho\in\mathbb B^*} h(\rho)=h(\rho^*)$.  This implies 
 \begin{align}\label{infsu}
\sup_{\rho\in\mathbb B^*}\inf_{u\in \mathcal{U}}\{\psi(u)+\gamma\rho(u-\hat u)\}=\inf_{u\in \mathcal{U}}\{\psi(u)+\gamma\rho^*(u-\hat u)\}.
\end{align} 
Finally, by (\ref{InfSup}) and (\ref{infsu}),
 \begin{align*}
\inf_{u\in \mathcal{U}} \Big\{\psi(u) + \gamma\|u-\hat u\|_U\Big\}&=\inf_{u\in \mathcal{U}}\sup_{\rho\in \mathbb B^*}\Big\{\psi(u) +\gamma\rho(u-\hat u)\Big\}=\inf_{u\in \mathcal{U}}\Big\{\psi(u) +\gamma\rho^*(u-\hat u)\Big\}.
\end{align*} 
The results follow defining $\hat \rho:=-\gamma\rho^*$.  
\end{proof}


We can now prove the following variant of Ekeland principle.
\begin{lemma}\label{EkelandLinNor}
Suppose $\big(U,\|\cdot\|_{U}\big)$ is a Banach space,  $\mathcal U$ is a closed convex subset of $U$, and  ${\mathcal J}:\mathcal U\to\mathbb R$ is a lower semicontinuous G{a}teaux differentiable function. Let $\bar u\in U$ and $r>0$ such that
 \begin{align*}
{\mathcal J}(\bar u)\le{\mathcal J}(s)\quad \text{for all $s\in\mathcal U$ with $\|s-\bar u\|_{U}\le r$}.
\end{align*} 
Let  $u\in \mathcal U$ and $\varepsilon>0$ satisfy $
\|u-\bar u\|_{U}<r$ and  ${\mathcal J}(u)\le  {\mathcal J}(\bar u)+\varepsilon.$
Then for every $\lambda\in\big(0,r-\|u-\bar u\|_U\big)$ there exist $\hat u\in\mathcal U$ and $\hat \rho\in U^*$ such that 
\begin{itemize}
\item[(i)] $\|u-\hat u\|_{U}\le \lambda$;
\item[(ii)] $\displaystyle\|\hat\rho\|_{U^*}\le \frac{\varepsilon}{\lambda}$;
\item[(iii)] $\hat\rho\in \mathcal J'(\hat u)+N_{\mathcal U}(\hat u)$.
\end{itemize}
\end{lemma}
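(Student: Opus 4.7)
The plan is to combine the classical Ekeland variational principle with \Cref{normlem} to construct the pair $(\hat u,\hat\rho)$. I would first apply Ekeland's principle to the lower semicontinuous function $\mathcal J$ on the closed set $\mathcal U\cap\overline B(\bar u,r)$. The hypotheses yield $\mathcal J(u)\le\mathcal J(\bar u)+\varepsilon$ with $\bar u$ minimizing $\mathcal J$ on this set, so the principle produces $\hat u$ satisfying $\|\hat u-u\|_U\le\lambda$ (which is (i)) and
\begin{align*}
	\mathcal J(\hat u)\le \mathcal J(s)+\tfrac{\varepsilon}{\lambda}\|s-\hat u\|_U\qquad \forall s\in\mathcal U\cap\overline B(\bar u,r).
\end{align*}
The bound $\lambda<r-\|u-\bar u\|_U$ combined with the triangle inequality forces $\|\hat u-\bar u\|_U<r$, so $\hat u$ lies strictly inside the ball and the displayed inequality remains valid for every $s\in\mathcal U$ in a sufficiently small neighbourhood of $\hat u$.

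Next I would pass to a first-order inequality via the Gateaux derivative. Given $u'\in\mathcal U$, convexity of $\mathcal U$ allows the test point $s=\hat u+t(u'-\hat u)$ for small $t>0$; dividing the resulting Ekeland inequality by $t$ and letting $t\to 0^+$ yields
\begin{align*}
	\mathcal J'(\hat u)(u'-\hat u)+\tfrac{\varepsilon}{\lambda}\|u'-\hat u\|_U\ge 0\qquad \forall u'\in\mathcal U,
\end{align*}
with equality at $u'=\hat u$; hence the infimum of this quantity over $\mathcal U$ equals zero.

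Finally I would invoke \Cref{normlem} with $\psi(u'):=\mathcal J'(\hat u)(u'-\hat u)$ (linear, hence convex and continuous) and $\gamma:=\varepsilon/\lambda$. This produces $\hat\rho\in U^*$ with $\|\hat\rho\|_{U^*}\le \varepsilon/\lambda$, which is (ii), such that
\begin{align*}
	0=\inf_{u'\in\mathcal U}\bigl\{\mathcal J'(\hat u)(u'-\hat u)-\hat\rho(u'-\hat u)\bigr\}.
\end{align*}
Since this infimum is attained at $u'=\hat u$, the expression in braces is nonnegative on $\mathcal U$, that is, $(\hat\rho-\mathcal J'(\hat u))(u'-\hat u)\le 0$ for every $u'\in\mathcal U$, which is precisely $\hat\rho-\mathcal J'(\hat u)\in N_{\mathcal U}(\hat u)$, giving (iii).

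The main technical subtlety I anticipate concerns the applicability of Ekeland's principle, since the statement only assumes $U$ is normed while the classical argument requires $\mathcal U\cap\overline B(\bar u,r)$ to be complete. In the concrete application the ambient space is Banach and this is automatic; in the general normed framework one either imposes completeness of $\mathcal U$ or passes to the completion of $U$, using the closedness of $\mathcal U$ and the lower semicontinuity of $\mathcal J$ to keep the constructed point inside $\mathcal U$.
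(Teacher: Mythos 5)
Your proposal is correct and follows essentially the same route as the paper: Ekeland's principle on the closed ball intersected with $\mathcal U$, passage to the first-order inequality via directional derivatives, and then Lemma \ref{normlem} to extract $\hat\rho$ with the norm bound and the normal-cone inclusion. Your closing remark about completeness is a fair observation — the paper's proof asserts that a closed subset of the normed space is complete, which implicitly requires $U$ to be Banach (as is indeed assumed where the lemma is used, in Theorem \ref{Neccondcri}).
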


\begin{proof}
Let $S:=\{s\in\mathcal U: \|s-\bar u\|_{U}\le r\}$. Since $S$ is a closed subset of $U$,  $S$ is a complete metric space endowed with the metric induced from the norm of $U$, and ${\mathcal J}|_S$ is lower semicontinuous. We can apply Ekeland Principle (\cite[Theorem 1.1]{Ekeland}) to obtain that for every $\lambda\in\big(0,r-\|u-\bar u\|_U\big)$ there exists $\hat u\in S$ such that 

\begin{itemize}
\item[(a)] $\|u-\hat u\|_{U}\le \displaystyle\lambda$;
\item[(b)] ${\mathcal J}(\hat u)\le {\mathcal J}(u)$;
\item[(c)] $\hat{\mathcal J}(\hat u)\le \hat{\mathcal J}(s)$ for all $s\in S$, where  $\hat {\mathcal J}:\mathcal U\to\mathbb R$ is given by 
 \begin{align*}
\hat{\mathcal J}(s):={\mathcal J}(s)+\frac{\varepsilon}{\lambda}\|s-\hat u\|_{U}.
\end{align*} 
\end{itemize}

Let $r_{\lambda}:=r-\|u-\bar u\|_{U}-\lambda$; clearly $r_{\lambda}>0$. If $s\in\mathcal U$ satisfies $\|s-\hat u\|_{U}\le r_{\lambda}$, then 
 \begin{align*}
\|s-\bar u\|_{U}\le 	\|s-\hat u\|_{U}+	\|\hat u-u\|_{U}+	\|u-\bar u\|_{U}\le r_{\lambda}+\lambda+\|u-\bar u\|_{U}=r.
\end{align*} 	
Thus, from item $(c)$, $\hat{\mathcal J}(\hat u)\le \hat{\mathcal J}(s)$ for all $s\in \mathcal U$ with  $\|s-\hat u\|_{U}\le r_{\lambda}$. We conclude that  $\hat u$ is a local minimizer of $\hat{\mathcal J}$.
From this, we get
 \begin{align*}
0\le \liminf_{t\to0^+}\hspace*{0.05cm}\frac{\hat{\mathcal J}(\hat u+t(s-\hat u))-\hat {\mathcal J}(\hat u)}{t}=\mathcal J'(\hat u)(s-\hat u)+\frac{\varepsilon}{\lambda}\|s-\hat u\|_{U}\quad\forall s\in\mathcal U.
\end{align*} 
This can be rewritten as 
 \begin{align*}
0=\inf_{s\in\mathcal U}\Big\{\mathcal J'(\hat u)(s-\hat u)+\frac{\varepsilon}{\lambda}\|s-\hat u\|_{U}\Big\}
\end{align*} 
By Lemma \ref{normlem}, there exists $\hat \rho\in U^*$ with $\|\hat\rho\|_{U^*}\le \varepsilon/\lambda$ such that
 \begin{align*}
0=\inf_{s\in\mathcal U}\Big\{\mathcal J'(\hat u)(s-\hat u)+\frac{\varepsilon}{\lambda}\|s-\hat u\|_{U}\Big\}=\inf_{s\in\mathcal U}\Big\{\mathcal J'(\hat u)(s-\hat u)-\hat\rho(s-\hat u)\Big\}\le \mathcal J'(\hat u)(v-\hat u)-\hat\rho(v-\hat u)
\end{align*} 
for all $v\in\mathcal U$. This implies $\hat\rho\in \mathcal J'(\hat u)+N_{\mathcal U}(\hat u)$. Clearly, $\hat u$ and $\hat\rho$ satisfy items $(i)$-$(iii)$.  
\end{proof}

\subsection{Strong H\"older subregularity of the optimality mapping}

This subsection is devoted to study the behavior of critical points under the presence of perturbations.
We derive necessary and sufficient conditions for stability of the variational inequality describing the first-order necessary condition at critical points. From this point on, we assume that $\mathcal{J}:\mathcal{U}\to \mathbb{R}$ is Gateaux differentiable, unless we specify otherwise.

\vspace{.1in}\noindent{\bf Stability of the first-order necessary conditions.}
In the literature, the stability of the first-order necessary conditions is studied as a property of a set-valued mapping encapsulating the (generalized) equation satisfied by local minimizers. This property is known as strong H\"older (metric) subregularity, see \cite[ Section 3I]{DontRock}; the property has also appeared in the literature by the name of strong (metric) $\theta$-subregularity, see \cite[Section 4]{Dontsubreg}.

Let us begin giving a suitable notion for the correspondence between solutions of the perturbed variational inequality and the perturbations.
The set-valued mapping $\Phi:\mathcal U \twoheadrightarrow U^*$ given by 
 \begin{align*}
\Phi(u):=\mathcal J'(u)+N_{\mathcal U}(u)
\end{align*} 
is called  \textit{the optimality mapping}.
We now give the definition of stability that we wish to analyze, i.e., the so-called \textit{strong (metric) subregularty}.
\begin{definition}
Let $\bar u\in\mathcal U$ satisfy $0\in\Phi(\bar u)$. We say that the optimality mapping $\Phi:\mathcal U\twoheadrightarrow U^*$ is strongly (H\"older) subregular at $\bar u$ (with exponent $\theta\in(0,\infty)$) if there exist positive numbers $\alpha$ and $\kappa$ such that the following property holds. For all $u\in\mathcal U$ and $\rho\in U^*$, 
 \begin{align}\label{Stapropec}
\|u-\bar u\|_U\le\alpha\quad\text{and}\quad\rho\in \mathcal J' (u)+N_{\mathcal U}(u)\quad\text{imply}\quad\|u-\bar u\|_{U}\le \kappa \|\rho\|_{U^*}^{\theta}.
\end{align} 
\end{definition}
We now proceed to state both sufficient and necessary conditions for this notion of stability.

\vspace{.1in}\noindent{\bf Sufficient conditions.}
The proof of the sufficient condition for stability, as shown in the next theorem, follows the arguments presented in \cite[Theorem 1]{AlSoz} to the letter, where it was previously proven in the context of optimal control.
\begin{theorem}\label{Mainprop}
Let $\bar u\in\mathcal U$ such that $0\in\Phi(\bar u)$, and $\mu\in(0,\infty)$. Suppose there exist positive numbers $\delta$ and $c$ such that
 \begin{align}\label{Grothwk0}
\mathcal J'(u)(u-\bar u)\ge c\|u-\bar u\|_{U}^{\mu+1}\quad\text{for all $u\in \mathcal U$ with $\|u-\bar u\|_{U}\le\delta$.}
\end{align} 
Then the optimality mapping $\Phi:\mathcal U\twoheadrightarrow U^*$ is strongly H\"older subregular at $\bar u$ with exponent $1/\mu$.
\end{theorem}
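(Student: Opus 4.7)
The plan is to use the growth condition \eqref{Grothwk0} directly against the perturbed first-order condition, testing the normal-cone inequality with the reference point $\bar u$. This is a short, routine argument; no Ekeland-style auxiliary point construction or convexity trick is needed, since the growth is assumed at $u$ itself (not at $\bar u$).

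Concretely, I would proceed as follows. Fix $u\in\mathcal U$ and $\rho\in U^*$ with $\|u-\bar u\|_U\le\delta$ and $\rho\in\mathcal J'(u)+N_{\mathcal U}(u)$. By definition of the optimality mapping, the element $\nu:=\rho-\mathcal J'(u)$ lies in $N_{\mathcal U}(u)$, so that $\nu(\bar u-u)\le 0$, which rearranges to
\begin{equation*}
\mathcal J'(u)(u-\bar u)\le \rho(u-\bar u).
\end{equation*}
Combining this with the growth hypothesis \eqref{Grothwk0} and the duality estimate $\rho(u-\bar u)\le\|\rho\|_{U^*}\|u-\bar u\|_U$ gives
\begin{equation*}
c\|u-\bar u\|_U^{\mu+1}\le \mathcal J'(u)(u-\bar u)\le \|\rho\|_{U^*}\|u-\bar u\|_U.
\end{equation*}
If $u=\bar u$ the conclusion is trivial; otherwise we can divide by $\|u-\bar u\|_U$ and obtain $c\|u-\bar u\|_U^{\mu}\le\|\rho\|_{U^*}$, i.e.\ $\|u-\bar u\|_U\le c^{-1/\mu}\|\rho\|_{U^*}^{1/\mu}$.

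Setting $\alpha:=\delta$ and $\kappa:=c^{-1/\mu}$ then yields exactly the strong H\"older subregularity property \eqref{Stapropec} with exponent $1/\mu$. The only subtle point — and the main ``obstacle'', though it is minor — is the correct handling of the normal cone inequality: the admissible test vector for $\nu\in N_{\mathcal U}(u)$ is $\bar u-u$ (not $u-\bar u$), which produces the right sign so that the perturbation $\rho$ controls $\mathcal J'(u)(u-\bar u)$ from above. Everything else is a direct application of the growth assumption and the definition of the dual norm; no additional regularity of $\mathcal J$ beyond Gateaux differentiability is needed.
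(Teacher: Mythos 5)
Your argument is correct and is essentially identical to the paper's proof: both test the normal-cone membership of $\rho-\mathcal J'(u)$ against $\bar u-u$, combine the resulting inequality with the growth condition \eqref{Grothwk0} and the dual-norm bound, and conclude with $\alpha=\delta$, $\kappa=c^{-1/\mu}$. Nothing is missing.
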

\begin{proof}
Let $u\in\mathcal U$ and $\rho\in U^*$ be arbitrary satisfying $\|u-\bar u\|_{U}\le\delta$ and $\rho\in\Phi(u)$. Then, as $\rho-\mathcal J'(u)\in N_{\mathcal U}(u)$ and $\bar u\in\mathcal U$, we have
 \begin{align*}
0\ge \big(\rho-\mathcal J'(u)\big)(\bar u-u)=\rho(\bar u-u)+\mathcal J'(u)(u-\bar u)\ge -\|\rho\|_{U^*}\|u-\bar u\|_{U} + c\|u-\bar u\|_{U}^{\mu+1}.
\end{align*} 
Hence, $\|u-\bar u\|_{U}\le c^{-1/\mu}\|\rho\|_{U^*}^{1/\mu}.$ The result follows defining $\alpha:=\delta$ and $\kappa:=c^{-1/\mu}$.  
\end{proof}

Growth assumption (\ref{Grothwk0}) appeared first in \cite[Assumption 2]{AlSoz} as a natural hypothesis for an affine optimal control problem; see also \cite[Proposition 4.3]{Alellip}, where this kind of growth was proven for an elliptic optimal control problem under a linearized growth hypothesis. A similar assumption of this type appeared in \cite[Assumption A2]{Alpafa}, where stability results for an affine optimal control problem were studied. In  Proposition \ref{Dergrowth} below, we give  further details on  growth (\ref{Grothwk0}) and its linearization.

\vspace{.1in}\noindent{\bf Necessary conditions.}
In order to establish necessary conditions for stability in the form of growth properties of functionals, we will use Ekeland principle in the form of Lemma \ref{EkelandLinNor}, following the approach used in \cite{AragHil,AragBan}. In those papers, the subregularity property of the subdifferential of convex functions was characterized in terms of quadratic growth conditions; see also \cite{Mordusub}, where a similar approach was used for the limiting subdifferential. In all those three papers only Lipschitz stability and quadratic growth conditions were considered. We make simple refinements in those arguments to consider both H\"older stability and higher-order growth conditions.

In the next theorem, we argue similarly to the proof of \cite[Theorem 3.3]{AragHil}; see also the proofs of \cite[Theorem 2.1]{AragBan} and \cite[Theorem 3.1]{Mordusub} for parallel arguments.

\begin{theorem}\label{Neccondcri}
Suppose $\big(U,\|\cdot\|_{U}\big)$ is a Banach space, $\mathcal U$ a closed convex subset of $U$ and ${\mathcal J}:\mathcal U\to\mathbb R$ a lower semicontinuous G{a}teaux differentiable function. Let $\bar u\in\mathcal U$ be a local minimizer of ${\mathcal J}$ and $\mu\in(0,\infty)$. Suppose that the optimality mapping $\Phi:\mathcal U\twoheadrightarrow U^*$ is strongly H\"older subregular at $\bar u$ with exponent $1/\mu$. Then there exist positive numbers $\delta$ and $c$ such that
 \begin{align}\label{growthmu}
{\mathcal J}(u)-{\mathcal J}(\bar u)\ge c\|u-\bar u\|_{U}^{\mu+1}\quad \text{for all $u\in \mathcal U$ with $\|u-\bar u\|_{U}\le\delta$.}
\end{align} 

\end{theorem}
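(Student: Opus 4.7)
The plan is to argue by contradiction using the first-order variant of Ekeland's principle from Lemma \ref{EkelandLinNor} together with the subregularity hypothesis. Suppose \eqref{growthmu} fails; then for every $n\in\mathbb{N}$ one can pick $u_n\in\mathcal U$ with $0<\|u_n-\bar u\|_U\le 1/n$ and $\mathcal J(u_n)-\mathcal J(\bar u)<\varepsilon_n$, where $\varepsilon_n:=\tfrac{1}{n}\|u_n-\bar u\|_U^{\mu+1}$. (Note $u_n\neq\bar u$, since otherwise both sides of the growth inequality would vanish and the strict violation would be impossible.) Since $\bar u$ is a local minimizer, there is an $r>0$ with $\mathcal J(\bar u)\le\mathcal J(s)$ for $\|s-\bar u\|_U\le r$, and $\|u_n-\bar u\|_U<r/2$ for all $n$ large enough.

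Next, I would apply Lemma \ref{EkelandLinNor} at each such $n$ with $u=u_n$, $\varepsilon=\varepsilon_n$, and the carefully chosen stepsize $\lambda_n:=\tfrac{1}{2}\|u_n-\bar u\|_U$, which lies in $(0,\,r-\|u_n-\bar u\|_U)$ for $n$ large. This produces $\hat u_n\in\mathcal U$ and $\hat\rho_n\in U^*$ satisfying $\hat\rho_n\in\mathcal J'(\hat u_n)+N_{\mathcal U}(\hat u_n)$, $\|u_n-\hat u_n\|_U\le\lambda_n$, and $\|\hat\rho_n\|_{U^*}\le \varepsilon_n/\lambda_n=\tfrac{2}{n}\|u_n-\bar u\|_U^{\mu}$. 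The triangle inequality then yields the two-sided estimate $\tfrac{1}{2}\|u_n-\bar u\|_U\le\|\hat u_n-\bar u\|_U\le\tfrac{3}{2}\|u_n-\bar u\|_U$, so in particular $\hat u_n\neq\bar u$ and $\hat u_n\to\bar u$.

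For $n$ sufficiently large one has $\|\hat u_n-\bar u\|_U\le\alpha$, so the strong H\"older subregularity of $\Phi_{\mathcal J}$ at $\bar u$ applies and yields $\|\hat u_n-\bar u\|_U\le\kappa\|\hat\rho_n\|_{U^*}^{1/\mu}\le\kappa\,(2/n)^{1/\mu}\|u_n-\bar u\|_U$. Combining this with the lower bound $\tfrac{1}{2}\|u_n-\bar u\|_U\le\|\hat u_n-\bar u\|_U$ and dividing through by the positive quantity $\|u_n-\bar u\|_U$ gives $\tfrac{1}{2}\le\kappa\,(2/n)^{1/\mu}$, which is false for $n$ sufficiently large; this is the desired contradiction. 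The main subtlety lies in the choice of $\lambda_n$: it must be small enough (as a fraction of $\|u_n-\bar u\|_U$) that Ekeland's residual bound $\|\hat\rho_n\|_{U^*}\le\varepsilon_n/\lambda_n$ remains of order $\|u_n-\bar u\|_U^\mu/n$ so that subregularity produces an $o(\|u_n-\bar u\|_U)$ estimate, yet of the same order as $\|u_n-\bar u\|_U$ so that $\hat u_n$ is kept a definite fraction of $\|u_n-\bar u\|_U$ away from $\bar u$; the factor $\tfrac{1}{2}$ strikes this balance.
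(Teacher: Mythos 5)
Your proof is correct and follows essentially the same route as the paper's: negate the growth condition, apply the Ekeland variant (Lemma \ref{EkelandLinNor}) with $\lambda=\tfrac12\|u-\bar u\|_U$, and combine the resulting residual bound with the strong H\"older subregularity and the triangle inequality to force a contradiction. The only difference is cosmetic: you run the argument along a sequence with constants $1/n$ and contradict asymptotically, whereas the paper fixes the explicit constant $c=2^{-(2\mu+1)}\kappa^{-\mu}$ and reaches the contradiction $\hat u=u=\bar u$ in a single step.
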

\begin{proof}
Let $\alpha$ and $\kappa$ be positive numbers such that property (\ref{Stapropec}) holds. Suppose that (\ref{growthmu}) does not hold. Then there would exist $u\in\mathcal U\setminus\{\bar u\}$ satisfying $\|u-\bar u\|_{U}<2\alpha/3$ such that 
 \begin{align}\label{Contgrowth}
{\mathcal J}(u)-{\mathcal J}(\bar u)<\frac{1}{2^{2\mu+1}\kappa^{\mu}}\|u-\bar u\|_{U}^{\mu+1}.
\end{align} 
Let $\varepsilon:=2^{-(2\mu+1)}\kappa^{-\mu}\|u-\bar u\|_{U}^{\mu+1}$ and $\lambda:=2^{-1}\|u-\bar u\|_{U}$. Note that
 \begin{align*}
\lambda=\frac{3}{2}\|u-\bar u\|_{U}-\|u-\bar u\|_{U}<\alpha-\|u-\bar u\|_{U}.
\end{align*} 
From Lemma \ref{EkelandLinNor}, we conclude the existence of $\hat u\in\mathcal U$ and $\hat\rho\in U^*$ such that 
\begin{itemize}
\item[(i)] $\displaystyle\|u-\hat u\|_{U}\le \frac{1}{2}\|u-\bar u\|_{U}$;
\item[(ii)] $\displaystyle\|\hat\rho\|_{U^*}\le\frac{1}{4^{\mu}\kappa^\mu} \|u-\bar u\|_{U}^{\mu}$;
\item[(iii)] $\hat \rho\in \mathcal{J}'(\hat u)+N_{\mathcal U}(\hat u)$.
\end{itemize}
Observe that $\|\hat u-\bar u\|_{U}\le 	\|\hat u-u\|_{U}+	\|u-\bar u\|_{U}\le 1/2\|u-\bar u\|_{U}+\|u-\bar u\|_{U}< \alpha$.
By subregularity of the optimality mapping at $\bar u$, we get
 \begin{align}\label{in12}
\|\hat u-\bar u\|_{U}\le\kappa\|\hat\rho\|_{U^*}^{1/\mu}\le\frac{1}{4}\|u-\bar u\|_{U}.
\end{align} 
By item $(i)$, we have $\|u-\bar u\|_{U}\le \|u-\hat u\|_{U}+\|\hat u-\bar u\|_{U}\le 1/2\|u-\bar u\|_{U}+\|\hat u-\bar u\|_{U}$.
This implies $\|u-\bar u\|_{U}\le2\|\hat u-\bar u\|_{U}$. Combining this with (\ref{in12}), we get 
 \begin{align*}
\|\hat u-\bar u\|_{U}\le\frac{1}{4}\|u-\bar u\|_{U}\le\frac{1}{2}	\|\hat u-\bar u\|_{U},
\end{align*} 
and hence $\hat u=u=\bar u$. A contradiction to (\ref{Contgrowth}).  
\end{proof}

Growth condition (\ref{growthmu}) is well known in optimization. See, for example, \cite[Theorem 2.4]{Cawasem} or \cite[Theorem III]{IdrissParabolic} in affine optimal control; and \cite[Theorem 1]{IdrissEig} or \cite[Theorem I]{IdrissBalet} in the quantitative study of eigenvalues stability for the Schr\"{o}dinger operator.

\subsection{H\"older growth of real-valued functions}
In this section, we study how to reduce growth conditions (\ref{Grothwk0}) and (\ref{growthmu}) to linearized versions. This is to facilitate the understanding of their feasibility. We say that $\mathcal J:\mathcal U\to\mathbb R$ has second variation at $\bar u\in\mathcal U$ if there exists a  function $\mathcal J''(\bar u):U\times U\to\mathbb R$, positively homogeneous in each variable, such that
 \begin{align*}
{\mathcal J''}(\bar u)(v,w)=\lim_{\varepsilon\to0^+}\frac{{\mathcal J'}(\bar u+\varepsilon v)w-{\mathcal J'}(\bar u)w}{\varepsilon} \quad \forall v,w\in\mathcal U-\bar u.
\end{align*} 
From now on, we will assume that $\mathcal{J}$ has second variation at every element of $\mathcal U$. We abbreviate $\mathcal J''(\bar u)v^2:=\mathcal J''(\bar u)(v,v)$.

\vspace{.1in}\noindent{\bf A H\"older-type second order condition.}
In order to transfer  conditions (\ref{Grothwk0}) and (\ref{growthmu}) from being satisfied by a nonlinear function to a second order polynomial,  we will employ the following weakened version of ``twice continuously differentiable".
\begin{definition}
Let $\bar u\in\mathcal U$ and  $\mu\in[1,\infty)$. We say that ${\mathcal J}$ has changing curvature of order $\mu$ at $\bar u$ if for every $\varepsilon>0$ there exists $\delta>0$ such that 
 \begin{align}\label{thetacurv}
|\mathcal J''(\bar u+v)v^2-\mathcal J''(\bar u)v^2|\le \varepsilon\|v\|_{U}^{\mu+1}
\end{align} 
for all $v\in U$ with $\bar u+v\in\mathcal U$ and $\|v\|_{U}\le\delta$.
\end{definition}
Properties like (\ref{thetacurv}) have appeared ubiquitously in the optimal control literature of bang-bang controls. See, for example,  \cite[p. 4207]{Cawasem}, where it appeared as a standard assumption in abstract optimal control; or \cite[Lemma 11]{corella2023} where it appeared as natural property in the context of  parabolic optimal control problems.

\begin{proposition}\label{Propshila}
Suppose that ${\mathcal J}$ has changing curvature of order $\mu\in[1,\infty)$ at $\bar u\in\mathcal U$, then the following statements hold:
\begin{itemize}

\item[(i)]	For all $\varepsilon>0$ there exists $\delta>0$ such that 
 \begin{align*}
|{\mathcal J}(\bar u+v)-{\mathcal J}(\bar u)-\mathcal J'(\bar u)v-\frac{1}{2}\mathcal J''(\bar u)v^2|\le\varepsilon\|v\|_{U}^{\mu+1} 
\end{align*} 
for all $v\in U$ satisfying $\bar u+v\in\mathcal U$ and $\|v\|_U\le\delta$.

\item[(ii)]		 For all $\varepsilon>0$ there exists $\delta>0$ such that 
 \begin{align*}
|\mathcal J'(\bar u+v)v-\mathcal J'(\bar u)v-\mathcal J''(\bar u)v^2|\le\varepsilon\|v\|_{U}^{\mu+1} 
\end{align*} 
for all $v\in U$ satisfying $\bar u+v\in\mathcal U$ and $\|v\|_U\le\delta$.

\end{itemize}

\end{proposition}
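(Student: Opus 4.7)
The plan is to reduce both items to the one-dimensional calculus along the line segment from $\bar u$ to $\bar u + v$. By convexity of $\mathcal{U}$ this segment lies in $\mathcal{U}$, so the scalar auxiliaries
$\phi(t):=\mathcal{J}(\bar u + tv)$ and $\psi(t):=\mathcal{J}'(\bar u + tv)v$ are well-defined on $[0,1]$ and, by twice Gateaux differentiability of $\mathcal{J}$, satisfy $\phi'(t)=\mathcal{J}'(\bar u+tv)v$, $\phi''(t)=\mathcal{J}''(\bar u+tv)v^2$, and $\psi'(t)=\mathcal{J}''(\bar u+tv)v^2$. For item (ii) the classical Mean Value Theorem applied to $\psi$ produces $\theta\in(0,1)$ with $\mathcal{J}'(\bar u+v)v - \mathcal{J}'(\bar u)v = \mathcal{J}''(\bar u+\theta v)v^2$; for item (i), the Lagrange form of Taylor's theorem applied to $\phi$ produces $\theta\in(0,1)$ with $\mathcal{J}(\bar u+v)-\mathcal{J}(\bar u)-\mathcal{J}'(\bar u)v = \tfrac12 \mathcal{J}''(\bar u + \theta v)v^2$. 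Subtracting $\mathcal{J}''(\bar u)v^2$ (or half of it), both items reduce to controlling the single quantity $|\mathcal{J}''(\bar u+\theta v)v^2 - \mathcal{J}''(\bar u)v^2|$ for some $\theta\in(0,1)$.

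To control this quantity I would invoke the changing-curvature hypothesis at the point $w:=\theta v$: since $\bar u + \theta v\in\mathcal{U}$ by convexity and $\|\theta v\|_U\le\|v\|_U$, for any prescribed $\tilde\varepsilon>0$ there is $\delta>0$ such that $|\mathcal{J}''(\bar u + \theta v)(\theta v)^2 - \mathcal{J}''(\bar u)(\theta v)^2|\le \tilde\varepsilon\|\theta v\|_U^{\mu+1}$ as soon as $\|v\|_U\le\delta$. Using bilinearity of $\mathcal{J}''$ to pull $\theta^2$ out of the left-hand side and cancelling, one obtains $|\mathcal{J}''(\bar u + \theta v)v^2 - \mathcal{J}''(\bar u)v^2|\le \tilde\varepsilon\,\theta^{\mu-1}\|v\|_U^{\mu+1}\le \tilde\varepsilon\|v\|_U^{\mu+1}$, the last inequality holding because $\mu\ge 1$ and $\theta\in(0,1)$. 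Choosing $\tilde\varepsilon=2\varepsilon$ in (i) and $\tilde\varepsilon=\varepsilon$ in (ii) delivers the claimed estimates.

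The main obstacle I anticipate is precisely this rescaling step: the hypothesis is phrased with the perturbation direction and the bilinear-form argument identified as the \emph{same} vector $w$, while the Taylor remainder presents $\mathcal{J}''(\bar u + \theta v)v^2$, where the perturbation $\theta v$ and the argument $v$ differ by a factor $\theta$. Bilinearity of $\mathcal{J}''$ bridges the two, and it is at this juncture that the standing assumption $\mu\ge 1$ becomes essential: if $\mu<1$ the surviving factor $\theta^{\mu-1}$ would blow up as $\theta\to 0^+$ and the argument would collapse. Past this observation the proof is routine one-dimensional calculus on $[0,1]$.
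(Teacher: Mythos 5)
Your proof is correct and follows essentially the same route as the paper: reduce both items via Taylor's theorem and the Mean Value Theorem along the segment $t\mapsto\bar u+tv$, then apply the changing-curvature hypothesis at the rescaled perturbation $\theta v$ and use bilinearity of $\mathcal J''$ together with $\mu\ge1$ to absorb the factor $\theta^{\mu-1}\le1$. The rescaling subtlety you flag is exactly the step the paper handles the same way.
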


\begin{proof}
For each $v\in U$ with $\bar u+v\in\mathcal U$, define $h_v:[0,1]\to\mathbb R$ by $h_v(t)={\mathcal J}(\bar u+tv)$. We can apply the Taylor Theorem to conclude that for each $v\in\mathcal U-\bar u$ there exists  $t_v\in[0,1]$ such that $h(1)-h(0)=h'(0)+h''(t_v)/2$. That is,
 \begin{align}\label{geninex}
{\mathcal J}(\bar u+v)-{\mathcal J}(\bar u)=\mathcal J'(\bar u)v+\frac{1}{2}\mathcal J''(\bar u+t_v v)v^2\quad\forall v\in\mathcal U-\bar u.
\end{align} 
Let $\varepsilon>0$ be given. By definition of changing curvature of order $\mu$ at a point,  we can find $\delta>0$ such that 
 \begin{align}\label{geniney}
|\mathcal J''(\bar u+t_v v)(t_v v)^2-\mathcal J''(\bar u)(t_v v)^2|\le 2\, \varepsilon\|t_v v\|_{U}^{\mu+1}
\end{align} 
whenever $v\in\mathcal U-\bar u$ satisfies $\|v\|_{U}\le\delta$. Then, combining (\ref{geninex}) and (\ref{geniney}), we get
 \begin{align*}
t_{v}^{2}|{\mathcal J}(\bar u+v)-{\mathcal J}(\bar u)-\mathcal J'(\bar u)v-\frac{1}{2}\mathcal J''(\bar u)v^2|&=\frac{1}{2}|\mathcal J''(\bar u+t_v v)v^2-\mathcal J''(\bar u)v^2|\le  t_v^{\mu+1}\varepsilon\|v\|_{U}^{\mu+1} 
\end{align*} 
for all $v\in U$ satisfying $\bar u+v\in\mathcal U$ and $\|v\|_U\le\delta$.  Since $\mu\ge 1$, it follows that 
 \begin{align*}
|{\mathcal J}(\bar u+v)-{\mathcal J}(\bar u)-\mathcal J'(\bar u)v-\frac{1}{2}\mathcal J''(\bar u)v^2|\le  t_{v}^{\mu-1}\varepsilon\|v\|_{U}^{\mu+1}\le \varepsilon\|v\|_{U}^{\mu+1}  
\end{align*} 
for all $v\in U$ satisfying $\bar u+v\in\mathcal U$ and $\|v\|_U\le\delta$. Thus, item $(i)$ holds.

The proof of item $(ii)$ is analogous; it follows defining $k_v:[0,1]\to \mathbb R$ given by $k_v(t):=\mathcal{J}'(\bar u+tv)v$ for each $v\in\mathcal U-\bar u$, and applying the Mean Value Theorem to each function $k_v$.  
\end{proof}

\vspace{.1 in}\noindent{\bf Growth of functionals and their differentials.}
One easy consequence of Proposition \ref{Propshila} is the following characterization of growth condition (\ref{growthmu}) which follows directly from item $(i)$ of Proposition \ref{Propshila}.

\begin{proposition}\label{equivagro}
Suppose that ${\mathcal J}$ has changing curvature of order $\mu\in[1,\infty)$ at $\bar u\in\mathcal U$, then the following statements are equivalent:

\begin{itemize}

\item[(i)]	There exist positive numbers $\alpha$ and $c$ such that
 \begin{align*}
{\mathcal J}(u)-{\mathcal J}(\bar u)\ge c\|u-\bar u\|_{U}^{\mu+1}\quad \text{for all $u\in \mathcal U$ with $\|u-\bar u\|_{U}\le\alpha$.}
\end{align*} 

\item[(ii)]	There exist positive numbers $\alpha$ and $c$ such that
 \begin{align*}
\mathcal J'(\bar u)(u-\bar u)+\frac{1}{2}\mathcal J''(\bar u)(u-\bar u)^2\ge c\|u-\bar u\|_{U}^{\mu+1}\quad \text{for all $u\in \mathcal U$ with $\|u-\bar u\|_{U}\le\alpha$.}
\end{align*} 
\end{itemize}
\end{proposition}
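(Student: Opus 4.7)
The plan is to deduce the equivalence directly from item (i) of Proposition \ref{Propshila}, which provides a H\"older-type Taylor expansion
\[
\bigl|\mathcal J(\bar u+v)-\mathcal J(\bar u)-\mathcal J'(\bar u)v-\tfrac{1}{2}\mathcal J''(\bar u)v^2\bigr|\le \varepsilon\|v\|_U^{\mu+1}
\]
on a neighborhood of $0$ in $\mathcal U-\bar u$ determined by the choice of $\varepsilon>0$. The two inequalities in (i) and (ii) differ by exactly the quantity inside the absolute value above, so the implications will follow from a ``small-perturbation'' argument once we pick $\varepsilon$ smaller than half of the growth constant.

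For the implication (ii) $\Rightarrow$ (i): assume (ii) holds with constants $\alpha>0$ and $c>0$. Apply Proposition \ref{Propshila}(i) with $\varepsilon:=c/2$ to obtain $\delta>0$ such that for every $u\in\mathcal U$ with $\|u-\bar u\|_U\le \delta$, setting $v=u-\bar u$,
\[
\mathcal J(u)-\mathcal J(\bar u)\ge \mathcal J'(\bar u)(u-\bar u)+\tfrac{1}{2}\mathcal J''(\bar u)(u-\bar u)^2-\tfrac{c}{2}\|u-\bar u\|_U^{\mu+1}.
\]
Using (ii) on the first two terms and taking $\alpha':=\min\{\alpha,\delta\}$ yields $\mathcal J(u)-\mathcal J(\bar u)\ge (c/2)\|u-\bar u\|_U^{\mu+1}$ for all $u\in\mathcal U$ with $\|u-\bar u\|_U\le \alpha'$, which is (i) with constants $\alpha'$ and $c/2$.

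For the converse (i) $\Rightarrow$ (ii): assume (i) with constants $\alpha>0$ and $c>0$, and apply Proposition \ref{Propshila}(i) again with $\varepsilon:=c/2$, obtaining $\delta>0$ such that
\[
\mathcal J'(\bar u)(u-\bar u)+\tfrac{1}{2}\mathcal J''(\bar u)(u-\bar u)^2\ge \mathcal J(u)-\mathcal J(\bar u)-\tfrac{c}{2}\|u-\bar u\|_U^{\mu+1}
\]
whenever $u\in\mathcal U$ satisfies $\|u-\bar u\|_U\le \delta$. Invoking (i) on the right-hand side and setting $\alpha'':=\min\{\alpha,\delta\}$ gives the lower bound $(c/2)\|u-\bar u\|_U^{\mu+1}$ for all such $u$, which is precisely (ii).

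There is no genuine obstacle here: both directions are symmetric and rest entirely on the H\"older Taylor remainder already established in Proposition \ref{Propshila}(i), which is why the hypothesis of changing curvature of order $\mu$ is the correct one. The only care needed is to shrink the neighborhood so that both the bound on the remainder and the original growth inequality are simultaneously valid, and to absorb the $\varepsilon$-term by taking $\varepsilon<c$.
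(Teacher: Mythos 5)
Your proof is correct and follows exactly the route the paper intends: the paper states that Proposition \ref{equivagro} ``follows directly from item $(i)$ of Proposition \ref{Propshila},'' and your argument --- applying that H\"older-type Taylor estimate with $\varepsilon=c/2$ in each direction and shrinking the neighborhood --- is precisely the omitted verification.
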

 {The growth in  previous proposition has been proved in the literature of optimal control several times, this usually involves using a growth condition on the second variation over a critical cone and the so-called structural assumption; this is a condition one the level sets of the adjoint variable. See \cite[Section 3]{casas2012} and \cite[Theorem 2.4]{Casasbang}.}

On the other hand, another trivial, but important, consequence of Proposition \ref{Propshila} is the characterization of growth condition (\ref{Grothwk0}).	
\begin{proposition}\label{Dergrowth}
If ${\mathcal J}$ has changing curvature of order $\mu\in[1,\infty)$ at $\bar u\in\mathcal U$, then the following statements are equivalent.

\begin{itemize}

\item[(i)]	There exist positive numbers $\alpha$ and $c$ such that
 \begin{align*}
\mathcal J'(u)(u-\bar u)\ge c\|u-\bar u\|_{U}^{\mu+1}\quad \text{for all $u\in \mathcal U$ with $\|u-\bar u\|_{U}\le\alpha$.}
\end{align*} 

\item[(ii)]	There exist positive numbers $\alpha$ and $c$ such that
 \begin{align*}
\mathcal J'(\bar u)v+\mathcal J''(\bar u)v^2\ge c\|u-\bar u\|_{U}^{\mu+1}\quad \text{for all $u\in \mathcal U$ with $\|u-\bar u\|_{U}\le\alpha$.}
\end{align*} 
\end{itemize}
\end{proposition}
 {We mention that this characterization has appeared before in PDE-constrained optimization; see \cite[Proposition 4.1]{Alellip} or \cite[Lemma 12]{corella2023}. Numerous conditions exist to verify the validity of the growth condition given in Proposition \ref{Dergrowth}; see, e.g., \cite[Lemma 2.5]{Casasbang} or \cite[Theorem 6.3]{Alellip}. A comprehensive discussion on the assumptions pertinent to this growth condition is available in \cite[Section 6]{Alellip}. An explicit example where the growth holds for $\mu\in\mathbb N$ was given in \cite[Example 1.2]{Seyden1} (an optimal control problem  constrained by ODEs).}

 \vskip1cm
\centerline{Alberto Dom{\'i}nguez Corella}
 \centerline{Friedrich-Alexander-Universit\"{a}t Erlangen-N\"{u}rnberg}
\centerline{Department of Data Science, Chair for Dynamics, Control and Numerics (Alexander von
Humboldt-Professorship)}
\centerline{Erlangen, Germany}
 \centerline{E-mail: alberto.of.sonora@gmail.com}
  \vskip0.5cm
\centerline{Nicolai Jork}
 \centerline{Institute of Statistics and Mathematical Methods in Economics}
\centerline{Vienna University of Technology, Austria}
 \centerline{E-mail: nicolai.jork@tuwien.ac.at}
 \vskip0.5cm
\centerline{\v S{\'a}rka Ne\v casov\`a}
 \centerline{Institute of Mathematics of the Academy of Sciences of the Czech Republic}
\centerline{\v Zitna 25, 115 67 Praha 1, Czech Republic}
 \centerline{E-mail: matus@math.cas.cz}
 \vskip0.5cm
\centerline{John Sebastian H. Simon}
\centerline{Johann Radon Institute for Computational and Applied Mathematics (RICAM)}
\centerline{Austrian Academy of Sciences}
\centerline{Altenberger Strasse 69, 4040 Linz, Austria}
\centerline{E-mail: john.simon@ricam.oeaw.ac.at; jhsimon1729@gmail.com}

\end{document}